\def\eprint{1}  

\def\shownotes{0}

\ifnum\shownotes=1
\newcommand{\authnote}[2]{\textcolor{red}{\textsf{#1 }\textcolor{blue}{ #2}}}
\else
\newcommand{\authnote}[2]{}
\fi
\newcommand{\anote}[1]{{\authnote{Ali}{#1}}}
\newcommand{\ynote}[1]{{\authnote{Yilei}{#1}}}

\documentclass[11pt]{article}
\usepackage{amsmath,amssymb,amsfonts,amsthm}
\usepackage{url}
\usepackage{centernot}
\usepackage{courier}
\usepackage{color}
\usepackage{tikz}
\usepackage{hyperref}
\hypersetup{colorlinks=true,urlcolor=blue,citecolor=blue,linkcolor=blue}
\usepackage{listings}
\lstset{basicstyle=\small\ttfamily,keywordstyle=\color{blue},language=python,xleftmargin=16pt}

\bibliographystyle{alpha}

\textwidth=6.6in
\textheight=9in
\evensidemargin=0.35in
\oddsidemargin=0.35in
\topmargin=-0.5in
\headheight=0in
\headsep=.5in
\hoffset  -.4in
\pagestyle{plain}

\title{ Hard Isogeny Problems over RSA Moduli and \\ Groups with Infeasible Inversion}
\author{
	Salim Ali Altu\u{g}\thanks{Boston University. \texttt{saaltug@bu.edu}. Research supported by the grant DMS-1702176. } 
	\and Yilei Chen\thanks{Visa Research. \texttt{chenyilei.ra@gmail.com}. Research conducted while the author was at Boston University supported by the NSF MACS project and NSF grant CNS-1422965. } 
}
\date{\today}




\newcommand{\act}{\mathsf{act}}

\newcommand{\BE}{\mathsf{BE}}   

\newcommand{\Cert}{\mathsf{Cert}}  

\newcommand{\convert}{\mathsf{Ext}}  
\newcommand{\comp}{\mathsf{Compose}}  

\newcommand{\CRS}{\mathsf{CRS}}  
\newcommand{\CT}{\mathsf{CT}}  
\newcommand{\Dec}{\mathsf{Dec}}
\newcommand{\DS}{\mathsf{DS}}  
\newcommand{\DTS}{\mathsf{DTS}}  
\newcommand{\Enc}{\mathsf{Enc}}

\newcommand{\enc}{\mathsf{enc}}
\newcommand{\canenc}{\mathsf{enc}^*} 


\newcommand{\gcdop}{\mathsf{gcd.op}}
\newcommand{\Gen}{\mathsf{Gen}}
\newcommand{\TGII}{\mathsf{TGII}}  


\newcommand{\lad}{\mathsf{ladder}}  
\newcommand{\Lattice}{\Lambda}


\newcommand{\negl}{\mathsf{negl}}

\newcommand{\Partial}{\mathsf{Partial}}

\newcommand{\PK}{\mathsf{PK}}
\newcommand{\MPK}{\mathsf{MPK}} 
\newcommand{\PP}{\mathsf{PP}} 	

\newcommand{\Prove}{\mathsf{Prove}}


\newcommand{\Random}{\mathsf{Random}}
\newcommand{\Sam}{\mathsf{Sam}}  
\newcommand{\secp}{\lambda}  
\newcommand{\Setup}{\mathsf{Setup}} 
\newcommand{\Sig}{\mathsf{Sign}}

\newcommand{\SK}{\mathsf{SK}} 
\newcommand{\MSK}{\mathsf{MSK}} 

\newcommand{\Sym}{\mathsf{Sym}} 

\newcommand{\temp}{\mathsf{temp}}  
\newcommand{\Trap}{\mathsf{Trap}}  
\newcommand{\Ver}{\mathsf{Ver}}
\newcommand{\VK}{\mathsf{VK}}  
\newcommand{\Verify}{\mathsf{Verify}}
\newcommand{\zo}{\{0,1\}}

\newcommand{\la}{\leftarrow}

\newcommand{\ZNZ}{\Z/N\Z}







\newcommand{\Aut}{{\rm Aut}}

\newcommand{\C}{\mathbb{C}}
\newcommand{\CCC}{\mathcal{C}}
\newcommand{\charac}{{\rm char}}

\newcommand{\CRT}{\mathsf{CRT}}

\newcommand{\Ell}{{\rm Ell}}  
\newcommand{\Endo}{{\rm End}}

\newcommand{\F}{\mathbb{F}}

\newcommand{\G}{\mathbb{G}}		
\newcommand{\Gal}{{\rm Gal}}  

\newcommand{\mfk}[1]{\mathfrak{#1}}
\newcommand{\CL}{\mathcal{CL}}  

\newcommand{\kernel}{{\rm ker}}  
\newcommand{\kron}[2]{\genfrac{(}{)}{}{}{#1}{#2}}   



\newcommand{\mat}[1] { \mathbf{#1} }		

\newcommand{\ary}[1] { \mathbf{#1} }		

\newcommand{\N}{\mathbb{N}}

\newcommand{\OOO}{\mathcal{O}}
\newcommand{\poly}{\mathsf{poly}}

\newcommand{\Q}{\mathbb{Q}}
\newcommand{\QR}{\mathit{QR}}

\newcommand{\R}{\mathbb{R}}
\newcommand{\set}[1]{ \left\{ #1 \right\}  }   
\newcommand{\SL}{\mathrm{SL}}  

\newcommand{\uhp}{\mathbb{H}}  

\newcommand{\Z}{\mathbb{Z}}
\newcommand{\K}{\mathbf{k}}

\newcommand{\pmat}[1]{\begin{pmatrix}#1\end{pmatrix}}

\newtheorem{theorem}{Theorem}[section]
\newtheorem{algorithm}[theorem]{Algorithm}

\newtheorem{choice}[theorem]{Choice}

\newtheorem{construction}[theorem]{Construction}

\newtheorem{definition}[theorem]{Definition}
\newtheorem{example}[theorem]{Example}

\newtheorem{lemma}[theorem]{Lemma}

\newtheorem{proposition}[theorem]{Proposition}
\newtheorem{remark}[theorem]{Remark}

\setlength\parindent{0pt}
\setlength\parskip{1ex plus 2pt minus 1pt}

\begin{document}
\maketitle

\begin{abstract}

We initiate the study of computational problems on elliptic curve isogeny graphs defined over RSA moduli. We conjecture that several variants of the neighbor-search problem over these graphs are hard, and provide a comprehensive list of cryptanalytic attempts on these problems. Moreover, based on the hardness of these problems, we provide a construction of groups with infeasible inversion, where the underlying groups are the ideal class groups of imaginary quadratic orders. 

Recall that in a group with infeasible inversion, computing the inverse of a group element is required to be hard, while performing the group operation is easy. Motivated by the potential cryptographic application of building a directed transitive signature scheme, the search for a group with infeasible inversion was initiated in the theses of Hohenberger and Molnar (2003). Later it was also shown to provide a broadcast encryption scheme by Irrer et al. (2004). However, to date the only case of a group with infeasible inversion is implied by the much stronger primitive of self-bilinear map constructed by Yamakawa et al. (2014) based on the hardness of factoring and indistinguishability obfuscation (iO). Our construction gives a candidate without using iO. 

\end{abstract}

\newpage
\footnotesize
\tableofcontents
\normalsize
\newpage

\setcounter{page}{1}


\section{Introduction}

Let $\G$ denote a finite group written multiplicatively. The discrete-log problem asks to find the exponent $a$ given $g$ and $g^a\in\G$. In the groups traditionally used in discrete-log-based cryptosystems, such as $(\Z/q\Z)^*$ \cite{DBLP:journals/tit/DiffieH76}, groups of points on elliptic curves \cite{miller1985use,koblitz1987elliptic}, and class groups \cite{DBLP:journals/joc/BuchmannW88,mccurley1988cryptographic}, computing the inverse $x^{-1} = g^{-a}$ given $x = g^a$ is easy. We say $\G$ is a \emph{group with infeasible inversion} if computing inverses of elements is hard, while performing the group operation is easy (i.e. given $g$, $g^a$, $g^b$, computing $g^{a+b}$ is easy).

The search for a group with infeasible inversion was initiated in the theses of Hohenberger \cite{hohenberger2003cryptographic} and Molnar \cite{molnar2003homomorphic}, motivated with the potential cryptographic application of constructing a directed transitive signature. It was also shown by Irrer et al. \cite{ILOP04} to provide a broadcast encryption scheme. The only existing candidate of such a group, however, is implied by the much stronger primitive of self-bilinear maps constructed by Yamakawa et al. \cite{DBLP:conf/crypto/Yamakawa0HK14}, assuming the hardness of integer factorization and indistinguishability obfuscation (iO) \cite{DBLP:conf/crypto/BarakGIRSVY01,DBLP:conf/focs/GargGH0SW13}. 


In this paper we propose a candidate trapdoor group with infeasible inversion without using iO. The underlying group is isomorphic to the ideal class group of an imaginary quadratic order (henceforth abbreviated as ``\emph{the class group}"). In the standard representation\ifnum\eprint=1\footnote{By emphasizing the ``representation", we would like to remind the readers that the hardness of group theoretical problems (like the discrete-log problem) depends on the group representation rather than the group structure. After all, most of the cryptographically interesting finite groups are chosen to be isomorphic to the innocent looking additive group $\Z/n\Z$, $n\in\N$. However, the isomorphism is typically hard to compute.}\else\fi  ~of the class group, computing the inverse of a group element is straightforward. The representation we propose uses the volcano-like structure of the isogeny graphs of ordinary elliptic curves. In fact, the initiation of this work was driven by the desire to explore the computational problems on the isogeny graphs defined over RSA moduli.

\subsection{Elliptic curve isogenies in cryptography}

An isogeny $\varphi: E_1 \to E_2$ is a morphism of elliptic curves that preserves the identity. Given two isogenous elliptic curves $E_1$, $E_2$ over a finite field, finding an explicit rational polynomial that represents an isogeny from $E_1$ to $E_2$ is traditionally called the \emph{computational isogeny problem}. 

\ifnum\eprint=0
\else
The study of computing explicit isogenies began with the rather technical motivation of improving Schoof's polynomial time algorithm \cite{schoof1985elliptic} to compute the number of points on an elliptic curve over a finite field (the improved algorithm is usually called Schoof-Elkies-Atkin algorithm, cf. \cite{couveignes1994schoof,schoof1995counting,elkies1998elliptic} and references therein). A more straightforward use for explicit isogenies is to transfer the elliptic curve discrete-log problem from one curve to the other \cite{galbraith1999,DBLP:conf/eurocrypt/GalbraithHS02,DBLP:conf/asiacrypt/JaoMV05}. If for any two isogenous elliptic curves computing an isogeny from one to the other is efficient, then it means the discrete-log problem is equally hard among all the curves in the same isogeny class. 
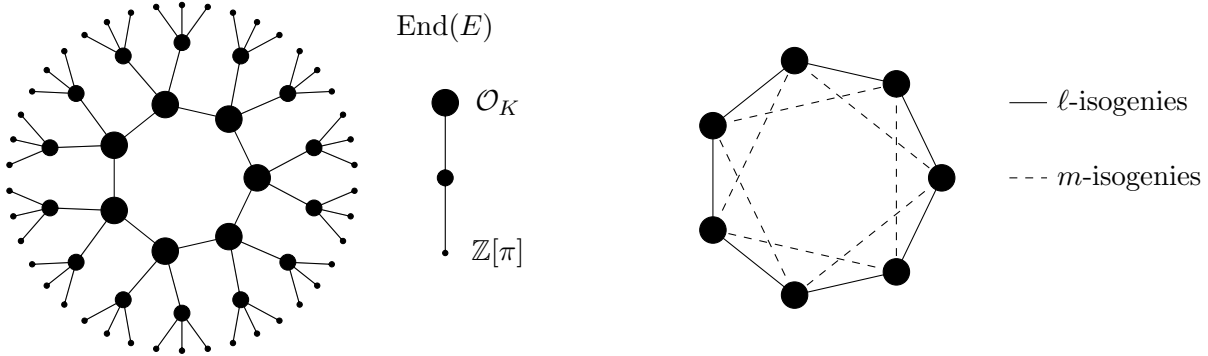
\begin{figure}
  \centering
    \begin{tikzpicture}
      \begin{scope}[xshift=0cm]
        \def\crater{7}
        \foreach \i in {1,...,\crater} {
          \draw[fill] (360/\crater*\i:1cm) circle (5pt);
          \draw (360/\crater*\i : 1cm) -- (360/\crater*\i+360/\crater : 1cm);
          \foreach \j in {-1,1} {
            \draw[fill] (360/\crater*\i : 1cm) -- (360/\crater*\i + \j*360/\crater/4 : 1.8cm) circle (3pt);
            \foreach \k in {-1,0,1} {
              \draw[fill] (360/\crater*\i + \j*360/\crater/4 : 1.8cm) --
              (360/\crater*\i + + \j*360/\crater/4 + \k*360/\crater/6 : 2.3cm) circle (1pt);
            }
          }
        }
      \end{scope}
      
      \begin{scope}[xshift=3.5cm]
        \node at (0,2) {$\Endo(E)$};
        \draw[fill] (0,1) circle(5pt) node[xshift=0.7cm]{$\OOO_K$} -- 
        (0,0) circle(3pt) --
        (0,-1) circle(1pt) node[xshift=0.7cm]{$\Z[\pi]$};
      \end{scope}

      \begin{scope}[xshift=8.5cm]
        \def\crater{7}
        \def\jumpa{-5}
        \def\diam{1.6cm}

        \foreach \i in {1,...,\crater} {
          \draw (360/\crater*\i : \diam) to (360/\crater*\i+360/\crater : \diam);
          \draw[dashed] (360/\crater*\i : \diam) to (360/\crater*\i+\jumpa*360/\crater : \diam);
        }
        \foreach \i in {1,...,\crater} {
          \pgfmathparse{int(mod(2^\i,\crater+1))}
         \let\exp\pgfmathresult
          \draw[fill] (360/\crater*\i: \diam) circle (5pt); 
        }
      \end{scope}

      \begin{scope}[xshift=11cm,yshift=1cm]
        \draw (0,0) -- (0.5,0) (0.5,0) node[black,anchor=west] {$\ell$-isogenies};
        \draw[dashed] (0,-1) -- (0.5,-1) (0.5,-1) node[black,anchor=west] {$m$-isogenies};
      \end{scope}
    \end{tikzpicture}
    \caption{Examples of isogeny graphs. Left: a connected component of $G_{\ell}(\K)$, and the corresponding tower of imaginary quadratic orders \cite{de2017mathematics}; Right: the vertex set is $\Ell_\OOO(\C)$ for an imaginary quadratic order $\OOO$, the edges represent (isomorphic classes of) isogenies of degrees $\ell$, $m$. }
  \label{fig:volcano_1}
\end{figure}

\fi

The best way of understanding the nature of the isogeny problem is to look at the \emph{isogeny graphs}. Fix a finite field $\K$ and a prime $\ell$ different than the characteristic of $\K$. Then the isogeny graph $G_{\ell}(\K)$ is defined as follows: each vertex in $G_{\ell}(\K)$ is a $j$-invariant of an isomorphism class of curves; two vertices are connected by an edge if there is an isogeny of degree $\ell$ over $\K$ that maps one curve to another. The structure of the isogeny graph is described in the PhD thesis of Kohel \cite{kohel1996endomorphism}. Roughly speaking, a connected component of an isogeny graph containing ordinary elliptic curves looks like a \emph{volcano} (termed in \cite{fouquet2002isogeny}). The connected component containing supersingular elliptic curves, on the other hand, has a different structure. In this article we will focus on the ordinary case.

\paragraph{A closer look at the algorithms of computing isogenies. }
Let $\K$ be a finite field of $q$ elements, $\ell$ be an integer such that $\gcd(\ell, q)=1$. 
Given the $j$-invariant of an elliptic curve $E$, there are at least two different ways to find all the $j$-invariants of the curves that are $\ell$-isogenous to $E$ (or to a twist of $E$) and to find the corresponding rational polynomials that represent the isogenies:
\begin{enumerate} 
\item Computing kernel subgroups of $E$ of size $\ell$, and then applying V\'{e}lu's formulae to obtain explicit isogenies and the $j$-invariants of the image curves, 
\item Calculating the $j$-invariants of the image curves by solving the $\ell^{th}$ modular polynomial $\Phi_\ell$ over $\K$, and then constructing explicit isogenies from these $j$-invariants. 
\end{enumerate}
Both methods are able to find all the $\ell$-isogenous neighbors over $\K$ in time $\poly(\ell, \log(q))$. In other words, \emph{over a finite field}, one can take a stroll around the polynomial-degree isogenous neighbors of a given elliptic curve efficiently. 

However, for two random isogenous curves over a sufficiently large field, finding an explicit isogeny between them seems to be hard, even for quantum computers. 
The conjectured hardness of computing isogenies was used in a key-exchange and a public-key cryptosystem by Couveignes \cite{cryptoeprint:2006:291} and independently by Rostovtsev and Stolbunov \cite{cryptoeprint:2006:145}. Moreover, a hash function and a key exchange scheme were proposed based on the hardness of computing isogenies over supersingular curves \cite{DBLP:journals/joc/CharlesLG09,DBLP:conf/pqcrypto/JaoF11}. Isogeny-based cryptography is attracting attention partially due to its conjectured post-quantum security.

\subsection{Isogeny graphs over RSA moduli}\label{subsec_isogenyvol}
Let $p,q$ be primes and let $N=pq$. In this work we consider computational problems related to elliptic curve isogeny graphs defined over $\Z/N\Z$, where the prime factors $p$, $q$ of $N$ are unknown. An isogeny graph over $\Z/N\Z$ is defined first by fixing the isogeny graphs over $\F_p$ and $\F_q$, then taking a graph tensor product; obtaining the $j$-invariants in the vertices of the graph over $\ZNZ$ by the Chinese remainder theorem. Working over the ring $\ZNZ$ without the factors of $N$ creates new sources of computational hardness from the isogeny problems. Of course, by assuming the hardness of factorization, we immediately lose the post-quantum privilege of the ``traditional'' isogeny problems. From now on all the discussions of hardness are with respect to the polynomial time classical algorithms. 

\paragraph{Basic neighbor search problem over $\ZNZ$.}
When the factorization of $N$ is unknown, it is not clear how to solve the basic problem of finding (even one of) the $\ell$-isogenous neighbors of a given elliptic curve. The two algorithms over finite fields we mentioned seem to fail over $\Z/N\Z$ since both of them require solving polynomials over $\Z/N\Z$, which is hard in general when the factorization of $N$ is unknown. In fact, we show that if it is feasible to find all the $\ell$-isogenous neighbors of a given elliptic curve over $\ZNZ$, then it is feasible to factorize $N$. 

\paragraph{Joint-neighbor search problem over $\ZNZ$.}
Suppose we are given several $j$-invariants over $\ZNZ$ that are connected by polynomial-degree isogenies, we ask whether it is feasible to compute their joint isogenous neighbors. For example, in the isogeny graph on the LHS of Figure~\ref{fig:joint_neighbor_intro}, suppose we are given $j_0$, $j_1$, $j_2$, and the degrees $\ell$ between $j_0$ and $j_1$, and $m$ between $j_0$ and $j_2$ such that $\gcd(\ell, m)=1$. Then we can find $j_3$ which is $m$-isogenous to $j_1$ and $\ell$-isogenous to $j_2$, by computing the polynomial $f(x) = \gcd( \Phi_{m}(j_1, x), \Phi_{\ell}(j_2, x) )$ over $\ZNZ$.  When $\gcd(\ell,m)=1$ the polynomial $f(x)$ turns out to be linear with its only root being $j_3$, hence computing the $(\ell,m)$ neighbor in this case is feasible.

\begin{figure}
  \centering
    \begin{tikzpicture}
      \begin{scope}[xshift=3cm, yshift = 0cm]
        \node at (0,0) {$\ZNZ$};
        \def\n{3}
        \def\angleshift{90}
        \foreach \i in {0,...,\n} {
          \pgfmathparse{360/(\n+3.5)*(3/2-\i) + \angleshift}
          \let\angle\pgfmathresult
          \draw (\angle:2.5) node (E\i) {$j_{\i}$};
        }
        \foreach \i in {0,2} {
          \pgfmathparse{int(\i+1)}
          \let\j\pgfmathresult
          \draw (E\i) -- node[auto,swap] {\scriptsize$\ell$} (E\j);
        }
        \foreach \i in {0,...,1} {
          \pgfmathparse{int(\i+2)}
          \let\j\pgfmathresult
          \draw[dashed] (E\i) -- node[auto,swap] {\scriptsize$m$} (E\j);
        }
      \end{scope}

      \begin{scope}[xshift=10cm, yshift = 0cm]
        \node at (0,0) {$\ZNZ$};
        \def\n{1}
        \def\angleshift{40}
          \pgfmathparse{360/(\n+5)*(3/2+1) + \angleshift}
          \let\angle\pgfmathresult
          \draw (\angle:2) node (E-1) {$j_{-1} = ~ ?$};
          \pgfmathparse{360/(\n+5)*(3/2-2) + \angleshift}
          \let\angle\pgfmathresult
          \draw (\angle:2) node (E2) {$ ... $};
        \foreach \i in {0,...,\n} {
          \pgfmathparse{360/(\n+5)*(3/2-\i) + \angleshift}
          \let\angle\pgfmathresult
          \draw (\angle:2) node (E\i) {$j_{\i}$};
        }
        \foreach \i in {1,...,2} {
          \pgfmathparse{360/(\n+12)*(3/2-\i) + \angleshift + 90}
          \let\angle\pgfmathresult
          \draw (\angle:3.5) node (H\i) {$j_{0, \i} = ~ ?$};
        }
        \foreach \i in {1,...,2} {
          \pgfmathparse{360/(\n+12)*(3/2-\i) + \angleshift + 30}
          \let\angle\pgfmathresult
          \draw (\angle:3.5) node (G\i) {$ ... $};
        }
        \foreach \i in {-1,...,1} {
          \pgfmathparse{int(\i+1)}
          \let\j\pgfmathresult
          \draw (E\i) -- node[auto,swap] {\scriptsize$\ell$} (E\j);
        }
          \draw (E0) -- node[auto,swap] {\scriptsize$\ell$} (H1);
          \draw (E0) -- node[auto,swap] {\scriptsize$\ell$} (H2);
          \draw (E1) -- node[auto,swap] {\scriptsize$\ell$} (G1);
          \draw (E1) -- node[auto,swap] {\scriptsize$\ell$} (G2);
      \end{scope}
    \end{tikzpicture}
  
    \caption{Left: the $(\ell, m)$-isogenous neighbor problem where $\gcd(\ell, m) = 1$. Right: the $(\ell, \ell^2)$-isogenous neighbor problem. }
  \label{fig:joint_neighbor_intro}
\end{figure}
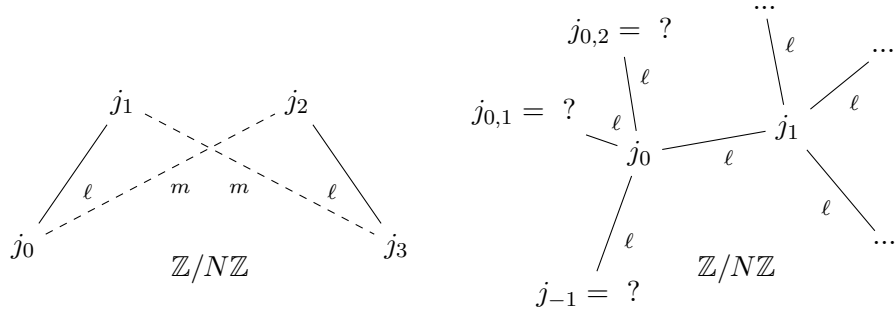

However, not all the joint-isogenous neighbors are easy to find. As an example, consider the following \emph{$(\ell, \ell^2)$-joint neighbor problem} illustrated on the RHS of Figure~\ref{fig:joint_neighbor_intro}. Suppose we are given $j_0$ and $j_1$ that are $\ell$-isogenous, and asked to find another $j$-invariant $j_{-1}$ which is $\ell$-isogenous to $j_0$ and $\ell^2$-isogenous to $j_1$. The natural way is to take the gcd of $\Phi_{\ell}(j_0, x)$ and $\Phi_{\ell^2}(j_1, x)$, but in this case the resulting polynomial is of degree $\ell>1$ and we are left with the problem of finding a root of it over $\ZNZ$, which is believed to be computationally hard without knowing the factors of $N$. 

Currently we do not know if solving this problem is as hard as factoring $N$. Neither do we know of an efficient algorithm of solving the $(\ell, \ell^2)$-joint neighbor problem. We will list our attempts in solving the $(\ell, \ell^2)$-joint neighbor problem in Section~\ref{sec:analysis_ell_ell2}.

The conjectured computational hardness of the $(\ell, \ell^2)$-joint neighbor problem is fundamental to the infeasibility of inversion in the group we construct.

\subsection{Constructing a trapdoor group with infeasible inversion}\label{sec:intro:tgii}

To explain the construction of the trapdoor group with infeasible inversion (TGII), it is necessary to recall the connection of the ideal class groups and elliptic curve isogenies. Let $\K$ be a finite field as before and let $E$ be an elliptic curve over $\K$ whose endomorphism ring is isomorphic to an imaginary quadratic order $\OOO$. The group of invertible $\OOO$-ideals acts on the set of elliptic curves with endomorphism ring $\OOO$. The ideal class group $\CL(\OOO)$ acts faithfully and transitively on the set 
\[ \Ell_\OOO(\K) = \set{ j(E) : E\text{ with } \Endo(E) \simeq \OOO }.\]
In other words, there is a map
\[
    \CL(\OOO)\times \Ell_\OOO(\K) \to \Ell_\OOO(\K), ~~~ (\mfk{a}, j) \mapsto \mfk{a} * j
\]
such that $\mfk{a}*(\mfk{b} * j)=(\mfk{a}\mfk{b}) * j$ for all $\mfk{a},\mfk{b}\in\CL(\OOO)$ and $j\in\Ell_\OOO(\K)$; for any $j, j'\in \Ell_\OOO(\K)$, there is a unique $\mfk{a}\in\CL(\OOO)$ such that $j' = \mfk{a} * j$. The cardinality of $\Ell_\OOO(\K)$ equals to the class number $h(\OOO)$.

We are now ready to provide an overview of the TGII construction with a toy example in Figure~\ref{fig:volcano_CRT}. 

{\bf Parameter generation.} 
To simplify this overview let us assume that the group $\CL(\OOO)$ is cyclic, in which case the group $\G$ with infeasible inversion is exactly $\CL(\OOO)$ (in the detailed construction we usually choose a cyclic subgroup of $\CL(\OOO)$). 
To generate the public parameter for the group $\CL(\OOO)$, we choose two primes $p$, $q$ and curves $E_{0,\F_p}$ over $\F_p$ and $E_{0,\F_q}$ over $\F_q$ such that the endomorphism rings of $E_{0,\F_p}$ and $E_{0,\F_q}$ are both isomorphic to $\OOO$. Let $N = p\cdot q$. Let $E_0$ be an elliptic curve over $\ZNZ$ as the CRT composition of $E_{0,\F_p}$ and $E_{0,\F_q}$. The $j$-invariant of $E_0$, denoted as $j_0$, equals to the $\CRT$ composition of the $j$-invariants of $E_{0,\F_p} $ and $E_{0,\F_q}$. 
The identity of $\CL(\OOO)$ is represented by $j_0$. The public parameter of the group is $(N, j_0)$. 

In the example of Figure~\ref{fig:volcano_CRT}, we set the discriminant $D$ of the imaginary quadratic order $\OOO$ to be $-251$. The group order is then the class number $h(\OOO) = 7$. Choose $p = 83$, $q = 173$, $N = pq = 14359$. Fix a curve $E_0$ so that $j(E_{0,\F_p}) = 15$, $j(E_{0,\F_q}) = 2$, then $j_0 = \CRT(83,173;15,2) = 12631$. The public parameter is $(14359, 12631)$.

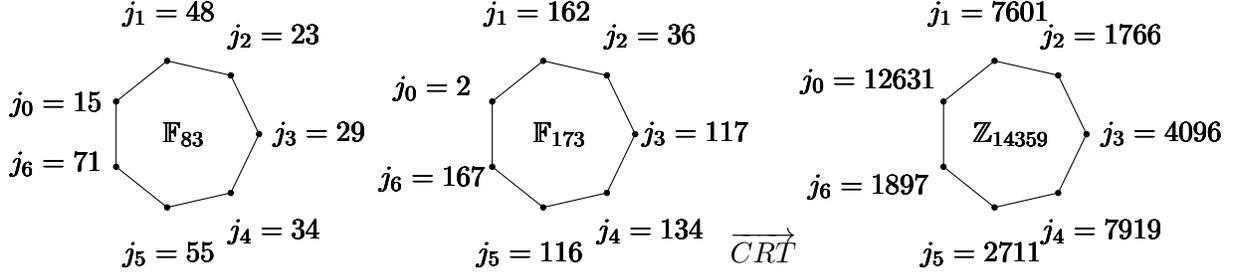
\begin{figure}[t]
  \centering
    \begin{tikzpicture}
      \def \diameter{1cm}
      \begin{scope}[xshift= 0 cm]
        \def\crater{7}
        \foreach \i in {1,...,\crater} {
          \draw[fill] (360/\crater*\i:\diameter) circle (1pt);
          \draw (360/\crater*\i : \diameter) -- (360/\crater*\i+360/\crater : \diameter);
                    \node at (0,0) {$\F_{83}$};
                    \node at (-1.7, 0.4) {$j_0 = 15$};
                    \node at (-0.2, 1.6) {$j_1 = 48$};
                    \node at ( 1.2, 1.3) {$j_2 = 23$};
                    \node at ( 1.8, 0.0) {$j_3 = 29$};
                    \node at ( 1.2,-1.3) {$j_4 = 34$};
                    \node at (-0.2,-1.6) {$j_5 = 55$};
                    \node at (-1.7,-0.4) {$j_6 = 71$};
        }
      \end{scope}
      \begin{scope}[xshift= 5 cm]
        \def\crater{7}
        \foreach \i in {1,...,\crater} {
          \draw[fill] (360/\crater*\i:\diameter) circle (1pt);
          \draw (360/\crater*\i : \diameter) -- (360/\crater*\i+360/\crater : \diameter);
                    \node at (0,0) {$\F_{173}$};
                    \node at (-1.7, 0.6) {$j_0 = 2$};
                    \node at (-0.3, 1.6) {$j_1 = 162$};
                    \node at ( 1.2, 1.3) {$j_2 = 36$};
                    \node at ( 1.8, 0.0) {$j_3 = 117$};
                    \node at ( 1.2,-1.3) {$j_4 = 134$};
                    \node at (-0.4,-1.6) {$j_5 = 116$};
                    \node at (-1.7,-0.6) {$j_6 = 167$};
        }
      \end{scope}
      \begin{scope}[xshift= 7.7 cm]
      \node at (0, -1.5) {$\overrightarrow{CRT}$};
      \end{scope}
      \begin{scope}[xshift= 11 cm]
        \def\crater{7}
        \foreach \i in {1,...,\crater} {
          \draw[fill] (360/\crater*\i:\diameter) circle (1pt);
          \draw (360/\crater*\i : \diameter) -- (360/\crater*\i+360/\crater : \diameter);
                    \node at (0,0) {$\Z_{14359}$};
                    \node at (-1.9, 0.7) {$j_0 = 12631$};
                    \node at (-0.3, 1.6) {$j_1 = 7601$};
                    \node at ( 1.2, 1.3) {$j_2 = 1766$};
                    \node at ( 2.0, 0.0) {$j_3 = 4096$};
                    \node at ( 1.2,-1.3) {$j_4 = 7919$};
                    \node at (-0.4,-1.6) {$j_5 = 2711$};
                    \node at (-1.9,-0.7) {$j_6 = 1897$};
        }
      \end{scope}
    \end{tikzpicture}
    \caption{A representation of $\CL(-251)$ by a $3$-isogeny volcano over $\Z_{14359}$ of size $h(-251)=7$. The $\F_{83}$ part is taken from \cite{cryptoeprint:2006:145}. 
    }\label{fig:volcano_CRT}
\end{figure}

{\bf The encodings. }
We provide two types of encodings for each group element: the canonical and composable embeddings. 
The \emph{canonical encoding} of an element is uniquely determined once the public parameter is fixed and it can be directly used in the equivalence test. It, however, does not support efficient group operations. The \emph{composable encoding} of an element, on the other hand, supports efficient group operations with the other composable encodings. Moreover, a composable encoding can be converted to a canonical encoding by an efficient, public extraction algorithm.

An element $x \in\CL(\OOO)$ is canonically represented by the $j$-invariant of the elliptic curve $ x * E_0$ (once again, obtained over $\F_p$ and $\F_q$ then composed by $\CRT$), and we call $j(x * E_0)$ the canonical encoding of $x$. Note that the canonical encodings of all the elements are fixed once $j_0$ and $N$ are fixed. 

To make things concrete, let $a = \sqrt{-251}$ and consider the toy example above. The ideal class $x = [(3, \frac{a+1}{2})]$ acting on $E_0$ over $\F_p$ gives $j(x * E_{0,\F_p}) = 48$, over $\F_q$ gives $j(x * E_{0,\F_q}) = 162$. The canonical encoding of $x$ is then $j_1 = \CRT(83,173;48,162) = 7601$. 
Similarly, the canonical encodings of the ideal classes $[(7, \frac{a-1}{2})]$, $[(5, \frac{a+7}{2})]$, $[(5, \frac{a+3}{2})]$, $[(7, \frac{a+1}{2})]$, $[(3, \frac{a-1}{2})]$ are $1766, 4096, 7919, 2711, 1897$.

{\bf The composable encodings and the composition law.}
To generate a composable encoding of $x \in\CL(\OOO)$, we factorize $x$ as $x = \prod_{x_i\in S} x_i^{e_i}$, where $S$ denotes a generating set, 
and both the norms $N(x_i)$ and the exponents $e_i$ being polynomial in size. The composable encoding of $x$ then consists of the norms $N(x_i)$ and the $j$-invariants of $x_i^{k} * E_0$, for $k\in[e_i]$, for $i\in [|S|]$. The \emph{degree} of a composable encoding is defined to be the product of the norms of the ideals $\prod_{x_i\in S}N(x_i)^{e_i}$. Note that the degree depends on the choice of $S$ and the factorization of $x$, which is not unique. 

As an example let us consider the simplest situation, where the composable encodings are just the canonical encodings themselves together with the norms of the ideals (i.e. the degrees of the isogenies). Set the composable encoding of $x = [(3, \frac{a + 1}{2})]$ be $(3, 7601)$, the composable encoding of $y = [(7, \frac{a-1}{2})]$ be $(7, 1766)$.

Let us remark an intricacy of the construction of composable encodings. When the degrees of the composable encodings of $x$ and $y$ are coprime and polynomially large, the composition of $x$ and $y$ can be done simply by concatenating the corresponding encodings. To extract the canonical encoding of $x\circ y$, we take the gcd of the modular polynomials. In the example above, the canonical encoding of $x\circ y$ can be obtained by taking the gcd of $\Phi_7(7601, x)$ and $\Phi_3(1766, x)$ over $\ZNZ$. Since the degrees are coprime, the resulting polynomial is linear, with the only root being $4096$, which is the canonical encoding of $[(5, \frac{a+7}{2})]$.

Note, however, that if the degrees share prime factors, then the gcd algorithm does not yield a linear polynomial, so the the above algorithm for composition does not go through. To give a concrete example to what this means let us go back to our example: if we represent $y=[(7, \frac{a-1}{2})]$ by first factorizing $y$ as $[(3, \frac{a + 1}{2})]^2$ we then get the composable encoding of $y$ as $(3, (7601,1766))$. In this case the gcd of $\Phi_{3^2}(7601, x)$ and $\Phi_3(1766, x)$ over $\ZNZ$ yields a degree $3$ polynomial, where it is unclear how to extract the roots. Hence, in this case we cannot calculate the canonical embedding of $x\circ y$ simply by looking at the gcd.

Therefore, to facilitate the efficient compositions of the encodings of group elements, we will need to represent them as the product of \emph{pairwise co-prime} ideals with polynomially large norms. This, in particular, means the encoding algorithm will need to keep track on the primes used in the degrees of the composable encodings in the system. In other words, the encoding algorithm is \emph{stateful}.

{\bf The infeasibility of inversion.} The infeasibility of inversion amounts to the hardness of the computation of the canonical embedding of an element $x^{-1}\in\G$ from a composable encoding of $x$, and it is based on the hardness of the $(\ell, \ell^2)$-isogenous neighbors problem for each ideal of a composable encoding.

Going back to our example, given the composable encoding $(3, 7601)$ of $x = [(3, \frac{a + 1}{2})]$, the canonical encoding of $x^{-1} = [(3, \frac{a - 1}{2})]$ is a root of $f(x) = \gcd( \Phi_{3^2}(7601, x), \Phi_3(12631, x))$. The degree of $f$, however, is $3$, so that it is not clear how to extract the root efficiently over an RSA modulus.

\paragraph{The difficulty of sampling the class group invariants and its implications.}
Let us remark that the actual instantiation of TGII is more involved. A number of challenges arise solely from working with the ideal class groups of imaginary quadratic orders. To give a simple example of the challenges we face, efficiently generating a class group with a known large prime class number is a well-known open problem. Additionally, our construction requires more than the class number (namely, a short basis of the relation lattice of the class group) to support an efficient encoding algorithm. 

In our solution, we choose the discriminant $D$ to be of size roughly $\secp^{O(\log \secp)}$ and polynomially smooth, so as to make the parameter generation algorithm and the encoding algorithm run in polynomial time. The discriminant $D$ (i.e. the description of the class group $\CL(D)$) has to be hidden to preserve the plausible $\secp^{O(\log \secp)}$-security of the TGII. Furthermore, even if $D$ is hidden, there is an $\secp^{O(\log \secp)}$ attack by first guessing $D$ or the group order, then solving the discrete-log problem given the polynomially-smooth group order. Extending the working parameters regime seems to require the solutions of several open problems concerning ideal class groups of imaginary quadratic orders. 


\paragraph{Summary of the TGII construction.}
To summarize, our construction of TGII chooses two sets of $j$-invariants that correspond to elliptic curves with the same imaginary quadratic order $\OOO$ over $\F_p$ and $\F_q$, and glues the $j$-invariants via the CRT composition as the canonical encodings of the group elements in $\CL(\OOO)$. 
The composable encoding of a group element $x$ is given as several $j$-invariants that represent the smooth ideals in a factorization of $x$. The efficiency of solving the $(\ell, m)$-joint-neighbor problem over $\ZNZ$ facilitates the efficient group operation over coprime degree encodings. The conjectured hardness of the $(\ell, \ell^2)$-joint-neighbor problem over $\ZNZ$ is the main reason behind the hardness of inversion, but it also stops us from composing encodings that share prime factors.

The drawbacks of our construction of TGII are as follows.
\begin{enumerate}
\item Composition is only feasible for coprime-degree encodings, which means in order to publish arbitrarily polynomially many encodings, the encoding algorithm has to be stateful in order to choose different polynomially large prime factors for the degrees of the encoding (we cannot choose polynomially large prime degrees and hope they are all different). 
\item In the definition from \cite{hohenberger2003cryptographic,molnar2003homomorphic}, the composable encodings obtained during the composition are required to be indistinguishable to a freshly sampled encoding. In our construction the encodings keep growing during compositions, until they are extracted to the canonical encoding which is a single $j$-invariant.
\item In addition to the $(\ell, \ell^2)$-joint-neighbor problem, the security of the TGII construction relies on several other heuristic assumptions. We will list our cryptanalytic attempts in \S\ref{sec:attack_concrete_TGII}. Moreover, even if we have not missed any attacks, the current best attack only requires $\secp^{O(\log \secp)}$-time, by first guessing the discriminant or the group order.
\end{enumerate}

\paragraph{The two applications of TGII.}
Let us briefly mention the impact of the limitation of our TGII on the applications of directed transitive signature (DTS) \cite{hohenberger2003cryptographic,molnar2003homomorphic} and broadcast encryption \cite{ILOP04}. 
For the broadcast encryption from TGII \cite{ILOP04}, the growth of the composable encodings do not cause a problem. 
For DTS, in the direct instantiation of DTS from TGII \cite{hohenberger2003cryptographic,molnar2003homomorphic}, the signature is a composable encoding, so the length of the signature keeps growing during the composition, which is an undesirable feature for a non-trivial DTS. So on top of the basic instantiation, we provide an additional compression technique to shrink the composed signature.

Let us also remark that in the directed transitive signature \cite{hohenberger2003cryptographic,molnar2003homomorphic}, encodings are sampled by the master signer; in the broadcast encryption scheme \cite{ILOP04}, encodings are sampled by the master encrypter. At least for these two applications, having the master signer/encrypter being stateful is not ideal but acceptable.

\ifnum\eprint=1
\subsection{Related works }
Note that given $g$ and $g^a$ over the ring $(\Z/n\Z)^*$, computing $g^{-a}$ is feasible for any $n$. On the other hand, computing $g^{1/a}$ is infeasible for suitable subgroup $\G$ of $(\Z/n\Z)^*$. However, in general, it is not clear how to efficiently perform the multiplicative operation ``in the exponent".

The only existing candidate of (T)GII that supports a large number of group operations is implied by the self-bilinear maps constructed by Yamakawa et al. \cite{DBLP:conf/crypto/Yamakawa0HK14} using general purpose indistinguishability obfuscation \cite{DBLP:conf/crypto/BarakGIRSVY01}. The existence of iO is currently considered as a strong assumption in the cryptography community. Over the past five years many candidates (since \cite{DBLP:conf/focs/GargGH0SW13}) and attacks (since \cite{DBLP:conf/eurocrypt/CheonHLRS15}) were proposed for iO. Basing iO on a clearly stated hard mathematical problem is still an open research area. 

Nevertheless, the self-bilinear maps construction from iO is conceptually simple. Here we sketch the idea. Given an integer $N$ with unknown factorization, a group element $a\in (\Z/\frac{\phi(N)}{4}\Z)^*$ is represented by $g^a\in \QR^+(N)$ ($\QR^+$ denotes the signed group of quadratic residues), together with an obfuscation of the circuit $C_{2a, N}$:
\[ C_{2a, N}: \QR^+(N) \to \QR^+(N),  ~~ x\mapsto x^{2a}.   \]
Given $g^a$, $\text{Obf}( C_{2a, N} )$,  $g^b$, $\text{Obf}( C_{2b, N} )$, everyone is able to compute $g^{2ab}$. \cite{DBLP:conf/crypto/Yamakawa0HK14} proves that under the hardness of factoring and assuming that the obfuscator satisfy the security of indistinguishable obfuscation, it is infeasible for the adversary to compute $g^{ab}$. Such a result implies that under the same assumption, it is infeasible to compute $g^{1/x}$ given $g^{x}$ and $\text{Obf}( C_{2x, N} )$. 

A downside of the self-bilinear maps of \cite{DBLP:conf/crypto/Yamakawa0HK14} is that the obfuscated circuits,  referred to as ``auxiliary inputs", keep growing after the compositions. Self-bilinear maps without auxiliary input is recently investigated by \cite{DBLP:journals/iacr/YamakawaYHK18} in the context of rings with infeasible inversion, but constructing such a primitive is open even assuming iO.

\else
\paragraph{Organization.}
The rest of the paper is organized as follows. 
Section~\ref{sec:prelim} provides the background of imaginary quadratic fields, elliptic curves and isogenies. 
Section~\ref{sec:neighbor_vol} defines the computational problems for isogeny graphs over composite moduli.
Section~\ref{sec:construction_GII} provides our basic construction of a trapdoor group with infeasible inversion. 
Section~\ref{sec:cryptanalysis} provides a highlight of our cryptanalysis attempts.

Due to the page limitation, we refer the readers to the full version in the supplementary material for the general construction of TGII, the details of our cryptanalysis attempts, and the constructions of the directed transitive signature and broadcast encryption schemes.

\fi

\section{Preliminaries}\label{sec:prelim}


\paragraph{Notation and terminology. }

Let $\C, \R, \Q, \Z, \N$ denote the set of complex numbers, reals, rationals, integers, and positive integers respectively. For any field $K$ we fix an algebraic closure and denote it by $\bar{K}$. 
For $n\in\N$, let $[n] := \set{1, ..., n}$. 
For $B\in \R$, an integer $n$ is called $B$-smooth if all the prime factors of $n$ are less than or equal to $B$.
An $n$-dimensional vector is written as a bold lower-case letter, e.g. $\ary{v}: = (v_1, ..., v_n)$. For an index $k\in \N$, distinct prime numbers $p_i$ for $i\in[k]$, and $c_i\in\Z/p_i\Z$ we will let $\CRT(p_1, ..., p_k; c_1, ..., c_k)$ to denote the unique $y\in \Z/(\prod_i^k p_i)\Z$ such that $y\equiv c_i \pmod{p_i}$, for $i\in[k]$.

In cryptography, the security parameter (denoted by $\secp$) is a variable that is used to parameterize the computational complexity of the cryptographic algorithm or protocol, and the adversary's probability of breaking security. In theory and by default, an algorithm is called ``efficient" if it runs in probabilistic polynomial time over $\secp$. Exceptions may occur in reality and we will explicitly discuss them when they come up in our applications.

An $n$-dimensional lattice $\Lattice$ is a discrete additive subgroup of $\R^n$ that generate it as a vector space over $\R$. Given $n$ linearly independent vectors $\mat{B} =\set{\ary{b}_1, ..., \ary{b}_n\in \R^n}$, the lattice generated by $\mat{B}$ is 
\[ \Lattice(\mat{B}) = \Lattice(\ary{b}_1, ..., \ary{b}_n) = \set{  \sum\limits_{i=1}^{n} x_i\cdot \ary{b}_i , x_i\in \Z }. \] 
We will denote the Gram-Schmidt orthogonalization of $\mat{B}$ by $\mat{\tilde{B}}$.


Let $\G$ denote a finite abelian group. We will denote the prime factorization of its order $|\G|$ by $|\G| = \prod_{i\in [k]}p_i^{w(p_i)}$. For each $p_i$, we set $H(p_i) := |\G|/p_i^{w(p_i)}$, and $G(p_i):= \set{g^{H(p_i)}, g\in\G}$. Note that with this notation we have the isomorphism
\[ \G\to \G(p_1)\times...\times \G(p_k), ~~g\mapsto (g^{H(p_1)}, ..., g^{H(p_k)}).   \] 

For a cyclic group $\G$, the \emph{discrete-log problem} asks to find the exponent $a\in [|\G|]$ given a generator $g$ and a group element $x = g^a\in\G$. The Pohlig-Hellman algorithm \cite{DBLP:journals/tit/PohligH78} solves the discrete-log problem in time $O\left( \sum_{i} w(p_i)(\log |\G| + \sqrt{p_i}) \right)$ if the factorization of $|\G|$ is known. 

Over a possibly non-cyclic group $\G$, the discrete-log problem is defined as follows: given a set of elements $g_1, ..., g_k$ and a group element $x\in\G$, output a vector $\ary{e}\in\Z^k$ such that $x = \prod_{i = 1}^{k} g_i^{e_i}$, or decide that $x$ is not in the subgroup generated by $\{g_1, ..., g_k\}$. A generalization of the Pohlig-Hellman algorithm works for non-cyclic groups with essentially the same cost plus an $O(\log |\G|)$ factor (the algorithm is folklore \cite{DBLP:journals/tit/PohligH78} and is explicitly given in \cite{teske1999pohlig}). 
\ifnum\eprint=1
A further improvement removing the $\log|\G|$ factor is given by Sutherland \cite{sutherland2011structure}.
\else
\fi


\subsection{Ideal class groups of imaginary quadratic orders}\label{sec:ICG}

There are two equivalent ways of describing ideal class groups of imaginary quadratic orders: via the theory of ideals or quadratic forms. We will be using these two view points interchangeably. The main references for these are \cite{mccurley1988cryptographic,cohen1995course,cox2011primes}.

Let $K$ be an imaginary quadratic field. An \emph{order} $\OOO$ in $K$ is a subset of $K$ such that
\begin{enumerate}
\item $\OOO$ is a subring of $K$ containing 1,
\item $\OOO$ is a finitely generated $\Z$-module,
\item $\OOO$ contains a $\Q$-basis of $K$.
\end{enumerate}
The ring $\OOO_K$ of integers of $K$ is always an order. For any order $\OOO$, we have $\OOO\subseteq \OOO_K$, in other words $\OOO_K$ is the maximal order of $K$ with respect to inclusion.

The ideal class group (or class group) of $\OOO$ is the quotient group $\CL(\OOO) = I(\OOO)/P(\OOO)$ where $I(\OOO)$ denotes the group of proper (i.e. invertible) fractional $\OOO$-ideals of, and $P(\OOO)$ is its subgroup of principal $\OOO$-ideals. 
Let $D$ be the discriminant of $\OOO$. Note that since $\OOO$ is quadratic imaginary we have $D<0$.
Sometimes we will denote the class group $\CL(\OOO)$ as $\CL(D)$, and the class number (the group order of $\CL(\OOO)$) as $h(\OOO)$ or $h(D)$. 

Let $D = D_0\cdot f^2$, where $D_0$ is the \emph{fundamental discriminant} and $f$ is the \emph{conductor} of $\OOO$ (or $D$). The following well-known formula relates the class number of an non-maximal order to that of the maximal one:
\begin{equation}\label{eqn:classnumbernonmaximal}
\frac{h(D)}{w(D)} = \frac{h(D_0)}{w(D_0)} \cdot f \prod_{p\mid f}\left( 1 - \frac{\kron{D_0}{p} }{p} \right),
\end{equation}
where $w(D) = 6$ if $D = -3$, $w(D) = 4$ if $D = -4$, and $w(D) = 2$ if $D < -4$. Let us also remark that the Brauer-Siegel theorem implies that $\ln(h(D))\sim \ln(\sqrt{|D|})$ as $D\to -\infty$.




\paragraph{Representations.}
An $\OOO$-ideal of discriminant $D$ can be represented by its generators, or by its binary quadratic forms. A binary quadratic form of discriminant $D$ is a polynomial $ax^2 + bxy + cy^2$ with $b^2 - 4ac = D$. We denote a binary quadratic form by $(a,b,c)$. The group $SL_2(\Z)$ acts on the set of binary quadratic forms and preserves the discriminant. We shall always be assuming that our forms are positive definite, i.e. $a>0$. Recall that a form $(a, b, c)$ is called \emph{primitive} if $\gcd(a,b,c)=1$, and a primitive form is called \emph{reduced} if $-a<b\leq a < c$ or $0\leq b \leq a = c$. Reduced forms satisfy $a\leq \sqrt{|D|/3}$. 

A fundamental fact, which goes back to Gauss, is that in each equivalence class, there is a unique reduced form (see Corollary 5.2.6 of \cite{cohen1995course}). Given a form $(a, b, c)$, denote $[ (a, b, c) ]$ as its equivalence class. Note that when $D$ is fixed, we can denote a class simply by $[(a,b,\cdot)]$. Efficient algorithms of composing forms and computing the reduced form can be found in \cite[Page~9]{mccurley1988cryptographic}.

\subsection{Elliptic curves and their isogenies}\label{sec:prelim:isogeny}

In this section we will recall some background on elliptic curves and isogenies. All of this material is well-known and the main references for this section are \cite{kohel1996endomorphism,silverman2009arithmetic,silverman2013advanced,sutherland2013isogeny,de2017mathematics}. 

Let $E$ be an elliptic curve defined over a finite field $\K$ of characteristic $\neq 2, 3$ with $q$ elements, given by its Weierstrass form $y^2 = x^3 + ax +b $ where $a, b\in \K$. 
By the Hasse bound we know that the order of the $\K$-rational points $E(\K)$ satisfies 
\[  - 2\sqrt{q} \leq \#E(\K) - (q+1) \leq 2\sqrt{q}.\]
Here, $t=q+1-\#E(\K)$ is the trace of Frobenius endomorphism $\pi: (x, y)\mapsto (x^q, y^q)$. Let us also recall that Schoof's algorithm \cite{schoof1985elliptic} takes as inputs $E$ and $q$, and computes $t$, and hence $\#E(\K)$, in time $\poly(\log q)$.

The \emph{$j$-invariant} of $E$ is defined as
\[ j(E) = 1728\cdot\frac{4a^3}{4a^3+27b^2}.\] 
The values $ j = 0$ or $1728$ are special and we will choose to avoid these two values throughout the paper.
Two elliptic curves are isomorphic over the algebraic closure $\bar{\K}$ if and only if their $j$-invariants are the same. Note that this isomorphism may not be defined over the base field $\K$, in which case the curves are called twists of each other. It will be convenient for us to use $j$-invariants to represent isomorphism classes of elliptic curves (including their twists). In many cases, with abuse of notation, a $j$-invariant will be treated as the same to an elliptic curve over $\K$ in the corresponding isomorphism class.

\paragraph{Isogenies.}
An \emph{isogeny} $\varphi: E_1 \to E_2$ is a morphism of elliptic curves that preserves the identity. 
Every nonzero isogeny induces a surjective group homomorphism from $E_1(\bar{\K})$ to $E_2(\bar{\K})$ with a finite kernel. Elliptic curves related by a nonzero isogeny are said to be isogenous. By the Tate isogeny theorem \cite[pg.139]{tate1966endomorphisms} two elliptic curves $E_1$ and $E_2$ are isogenous over $\K$ if and only if $\#E_1(\K) = \#E_2(\K)$. 

The degree of an isogeny is its degree as a rational map. An isogeny of degree $\ell$ is called an $\ell$-isogeny. When $\charac (\K)\nmid\ell$, the kernel of an $\ell$-isogeny has cardinality $\ell$. Two isogenies $\phi$ and $\varphi$ are considered equivalent if $\phi = \iota_1 \circ \varphi \circ \iota_2$ for isomorphisms $\iota_1$ and $\iota_2$. 
Every $\ell$-isogeny $\varphi: E_1 \to E_2$ has a unique dual isogeny $\hat{\varphi}: E_2 \to E_1$ of the same degree such that $\varphi \circ \hat{\varphi} = \hat{\varphi} \circ \varphi = [\ell]$, 
where $[\ell]$ is the multiplication by $\ell$ map. 
The kernel of the multiplication-by-$\ell$ map is the \emph{$\ell$-torsion subgroup} 
\[ E[\ell] = \set{ P \in E(\bar{\K}) : \ell P = 0 }.\] 
When $\ell\nmid\charac(\K)$ we have $ E[\ell] \simeq \Z/\ell\Z \times \Z/\ell\Z$. 
For a prime $\ell \neq \charac(\K)$, there are $\ell + 1$ cyclic subgroups in $E[\ell]$ of order $\ell$, each corresponding to the kernel of an $\ell$-isogeny $\varphi$ from $E$. An isogeny from $E$ is defined over $\K$ if and only if its kernel subgroup $G$ is defined over $\K$ (namely, for $P\in G$ and $\sigma\in\Gal(\bar{\K}/\K)$, $\sigma(P)\in G$; note that this does not imply $G\subseteq E(\K)$). If $\ell \nmid \charac(\K)$ and $j(E)\neq 0$ or $1728$, then up to isomorphism the number of $\ell$-isogenies from $E$ defined over $\K$ is $0, 1, 2$, or $\ell + 1$. 



\paragraph{Modular polynomials.}

Let $\ell\in\Z$, let $\uhp$ denote the upper half plane $\uhp:= \set{ \tau\in\C: \mathrm{im}~\tau>0 }$ and $\uhp^* = \uhp \cup \Q\cup \set{\infty}$. Let $j(\tau)$ be the classical modular function defined on $\uhp$. For any $\tau \in \uhp$, the complex numbers $j(\tau)$ and $j(\ell \tau)$ are the $j$-invariants of elliptic curves defined over $\C$ that are related by an isogeny whose kernel is a cyclic group of order $\ell$. The minimal polynomial $\Phi_\ell(y)$ of the function $j(\ell z)$ over the field $\C(j(z))$ has coefficients that are polynomials in $j(z)$ with inter coefficients. Replacing $j(z)$ with a variable $x$ gives the \emph{modular polynomial} $\Phi_\ell(x,y) \in \Z[x,y]$, which is symmetric in $x$ and $y$. It parameterizes pairs of elliptic curves over $\C$ related by a cyclic $\ell$-isogeny (an isogeny is said to be cyclic if its kernel is a cyclic group; when $\ell$ is a prime every $\ell$-isogeny is cyclic). 
The modular equation $\Phi_\ell(x,y) = 0$ is a canonical equation for the modular curve $Y_0(\ell) = \uhp/\Gamma_0(\ell)$, where $\Gamma_0(\ell)$ is the congruence subgroup of $\SL_2(\Z)$ defined by 
\[ \Gamma_0(\ell)=\set{ \pmat{ a & b \\ c & d  }\in\SL_2(\Z) \bigg\vert \pmat{ a & b \\ c & d  } \equiv \pmat{ * & * \\ 0 & *  }\pmod \ell  }. \]

The time and space required for computing the modular polynomial $\Phi_\ell$ are polynomial in $\ell$, cf. \cite[\S~3]{elkies1998elliptic} or \cite[Page~386]{cohen1995course}. In this article we will only use $\set{ \Phi_\ell \in\Z[x, y] }_{\ell\in\poly(\secp)}$, so we might as well assume that the modular polynomials are computed ahead of time\footnote{The modular polynomials $\Phi_\ell$ for $1\leq\ell\leq 300$ are available at \url{https://math.mit.edu/~drew/ClassicalModPolys.html}. }. 
In reality the coefficients of $\Phi_\ell$ over $\Z[x, y]$ grow significantly with $\ell$, so computing $\Phi_\ell$ over $\K[x,y]$ directly is preferable using the improved algorithms of \cite{charles2005computing,broker2012modular}, or even $\Phi_\ell(j_1, y)$ over $\K[y]$ using \cite{sutherland2013evaluation}.


\subsection{Isogeny volcanoes and the class groups}\label{sec:isogenyandclassgroup}

An isogeny from an elliptic curve $E$ to itself is called an \emph{endomorphism}. 
Over a finite field $\K$, $\Endo(E)$ is isomorphic to an imaginary quadratic order when $E$ is ordinary, or an order in a definite quaternion algebra when $E$ is supersingular. In this paper we will be focusing on the ordinary case.

\paragraph{Isogeny graphs.}
These are graphs capturing the relation of being $\ell$-isogenous among elliptic curves over a finite field $\K$.

\begin{definition}[$\ell$-isogeny graph]\label{def:isoggraph}
Fix a prime $\ell$ and a finite field $\K$ such that $ \charac(\K)\neq \ell$. The $\ell$-isogeny graph $G_\ell(\K)$ has vertex set $\K$. Two vertices $(j_1, j_2)$ have a directed edge (from $j_1$ to $j_2$) with multiplicity equal to the multiplicity of $j_2$ as a root of $\Phi_\ell(j_1, Y)$. The vertices of $G_\ell(\K)$ are $j$-invariants and each edge corresponds to an (isomorphism class of an) $\ell$-isogeny. 
\end{definition}
For $j_1,j_2\notin\{0,1728\}$, an edge $(j_1, j_2)$ occurs with the same multiplicity as $(j_2, j_1)$ and thus the subgraph of $G_\ell(\K)$ on $\K\setminus\set{0, 1728}$ can be viewed as an undirected graph. 
Every curve in the isogeny class of a supersingular curve is supersingular. Accordingly, $G_{\ell}(\K)$ has supersingular and ordinary components. The ordinary components of $G_\ell(\K)$ look like $\ell$-\emph{volcanoes}:
\begin{definition}[$\ell$-volcano]
Fix a prime $\ell$. An $\ell$-volcano $V$ is a connected undirected graph whose vertices are partitioned into one or more levels $V_0$, ..., $V_d$ such that the following hold:
\begin{enumerate}
\item The subgraph on $V_0$ (the surface, or the crater) is a regular graph of degree at most $2$.
\item For $i > 0$, each vertex in $V_i$ has exactly one neighbor in level $V_{i-1}$.
\item For $i < d$, each vertex in $V_i$ has degree $\ell + 1$.
\end{enumerate}
\end{definition}

Let $\phi: E_1 \to E_2$ by an $\ell$-isogeny of elliptic curves with endomorphism rings $\OOO_1 = \Endo(E_1)$ and $\OOO_2 = \Endo(E_2)$ respectively. Then, there are three possibilities for $\OOO_1$ and $\OOO_2$:
\begin{itemize} 
\item If $\OOO_1 = \OOO_2$, then $\phi$ is called horizontal,
\item If $[\OOO_1 : \OOO_2] = \ell$, then $\phi$ is called descending,
\item If $[\OOO_2 : \OOO_1] = \ell$, then $\phi$ is called ascending.
\end{itemize}

Let $E$ be an elliptic curve over $\K$ whose endomorphism ring is isomorphic to an imaginary quadratic order $\OOO$. Then, the set
\[ \Ell_\OOO(\K) = \set{ j(E)\in \K\mid \text{ with } \Endo(E) \simeq \OOO }\]
is naturally a $\CL(\OOO)$-torsor as follows:
For an invertible $\OOO$-ideal $\mfk{a}$ the $\mfk{a}$-torsion subgroup
\[ E[\mfk{a}] = \set{ P \in E(\bar{\K}): \alpha(P) = 0, \forall \alpha \in \mfk{a}} \]
is the kernel of a separable isogeny $\phi_\mfk{a}: E \to E'$. If the norm $N(\mfk{a})=[\OOO:a]$ is not divisible by $\charac(\K)$, then the degree of $\phi_{\mfk{a}}$ is $N(\mfk{a})$. 
Moreover, if $\mfk{a}$ and $\mfk{b}$ are two invertible $\OOO$-ideals, then $\phi_{\mfk{a}\mfk{b}} = \phi_{\mfk{a}}\phi_{\mfk{b}}$, and if $\mfk{a}$ is principal then $\phi_{\mfk{a}}$ is an isomorphism. This gives a faithful and transitive action of $\CL(\OOO)$ on $\Ell_{\OOO}(\K)$.

Every horizontal $\ell$-isogeny arises this way from the action of an invertible $\OOO$-ideal $\mfk{l}$ of norm $\ell$.
Let $K$ denote the fraction field of $\OOO$ and $\OOO_K$ be its ring of integers. If $\ell\mid [\OOO_K:\OOO]$ then no such ideal exists. Otherwise, $\OOO$ is said to be maximal at $\ell$ and there are $1 + \kron{ D }{\ell}$ horizontal $\ell$-isogenies. 

\begin{remark}[Linking ideals and horizontal isogenies]\label{remark:idealandisogeny}
When $\ell$ splits in $\OOO$ we have $(\ell) = \mfk{l}\cdot \bar{\mfk{l}}$. 
Fix an elliptic curve $E(\K)$ with $\Endo(E) \simeq \OOO$, the two horizontal isogenies $\phi_1: E\to E_1$ and $\phi_2: E\to E_2$ can be efficiently associated with the two ideals $\mfk{l}$ and $\bar{\mfk{l}}$ when $\ell\in\poly(\secp)$ (cf. \cite{schoof1995counting}). 
To do so, factorize the characteristic polynomial of Frobenius $\pi$ as $ (x-\mu)(x-\nu) \pmod \ell$, where $\mu, \nu\in \Z/\ell\Z$. Given an $\ell$-isogeny $\phi$ from $E$ to $ E/G$, the eigenvalue (say $\mu$) corresponding to the eigenspace $G$ can be verified by picking a point $P\in G$, then check whether $\pi(P) = [\mu]P$ modulo $G$. If so then $\mu$ corresponds to $\phi$. 
\end{remark}

The following fundamental result of Kohel summarizes the above discussion and more.
\begin{lemma}[\cite{kohel1996endomorphism}]\label{lemma:isograph}
Let $\ell$ be a prime. Let $V$ be an ordinary component of $G_\ell(\F_q)$ that does not contain $0$ or $1728$. Then $V$ is an $\ell$-volcano for which the following hold:
\begin{enumerate} 
\item The vertices in level $V_i$ all have the same endomorphism ring $\OOO_i$.
\item The subgraph on $V_0$ has degree $1 + \kron{D_0}{\ell}$, where $D_0 = \rm{disc}(\OOO_0)$.
\item If $\kron{D_0}{\ell} \geq 0$, then $|V_0|$ is the order of $[\mfk{l}]$ in $\CL(\OOO_0)$; otherwise $|V_0| = 1$.
\item The depth of $V$ is $d$, where $2d$ is the largest power of $\ell$ dividing $ (t^2-4q)/D_0 $, and $t^2 = \rm{tr}(\pi_E)^2$ for $j(E)\in V$. 
\item $\ell\nmid [\OOO_K:\OOO_0]$ and $[\OOO_i:\OOO_{i+1}] = \ell$ for $0\leq i<d$. 
\end{enumerate}
\end{lemma}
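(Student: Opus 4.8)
The plan is to recast the graph-theoretic statement as a local computation at $\ell$, carry that computation out at a single vertex, and then glue the local pictures together into a volcano. Fix a vertex $j(E)\in V$. Since $E$ is ordinary and $j(E)\notin\{0,1728\}$, the roots of $\Phi_\ell(j(E),Y)$ in $\F_q$ are precisely the $j$-invariants (up to $\overline{\F}_q$-isomorphism; the twist ambiguity is harmless away from $0$ and $1728$) of the curves $E'$ admitting an $\F_q$-rational cyclic $\ell$-isogeny $E\to E'$. Such isogenies are in bijection with the $\pi$-stable lines in $E[\ell]$, equivalently with the index-$\ell$ sublattices $\Lambda'\subseteq T_\ell(E)$ satisfying $\pi\Lambda'\subseteq\Lambda'$; and for such a $\Lambda'$ the target $E'$ has $\Endo(E')$ equal to the order whose $\ell$-adic completion is the multiplier ring $\{\,x\in K\otimes\Q_\ell : x\Lambda'\subseteq\Lambda'\,\}$ and which agrees with $\Endo(E)$ at every prime $\neq\ell$ (the prime-to-$\ell$ Tate modules of $E$ and $E'$ being identified by the isogeny). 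In particular $\Endo(E)$ and $\Endo(E')$ share the fundamental discriminant $D_0$ and have conductors differing by a factor in $\{1,\ell,\ell^{-1}\}$, which is exactly the horizontal/descending/ascending trichotomy recalled before the statement. One could equally run this bookkeeping over $\C$ after a Deuring lift, replacing Tate modules by complex lattices.

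The technical heart is the local count at $j(E)$. Write $\Endo(E)\otimes\Z_\ell=\Z_\ell+\ell^{i}\mathcal{M}$ with $\mathcal{M}:=\OOO_K\otimes\Z_\ell$ and $i=v_\ell(f)$, where $f$ is the conductor of $\Endo(E)$; likewise $\Z[\pi]\otimes\Z_\ell=\Z_\ell+\ell^{d}\mathcal{M}$ with $d=v_\ell(c)$, where $c$ is the conductor of $\Z[\pi]$, so that $i\le d$ (as $\Z[\pi]\subseteq\Endo(E)$) and $c^2=(t^2-4q)/D_0$. Splitting into the three cases for $\mathcal{M}$ governed by $\kron{D_0}{\ell}\in\{1,-1,0\}$, one picks an explicit $\Z_\ell$-basis of $T_\ell(E)\cong\Z_\ell+\ell^{i}\mathcal{M}$, runs over the $\ell+1$ lines of $T_\ell(E)/\ell T_\ell(E)$, computes the multiplier ring of the corresponding sublattice, and tests $\pi$-stability. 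The outcome to be verified is: there is exactly one ascending line when $i\ge 1$ and none when $i=0$; there are exactly $1+\kron{D_0}{\ell}$ horizontal lines when $i=0$ and none when $i\ge 1$; and the number of descending lines is $\ell-\kron{D_0}{\ell}$ when $i=0<d$, it is $\ell$ when $1\le i<d$, and it is $0$ when $i=d$, because at the floor the would-be descending sublattice (multiplier ring $\Z_\ell+\ell^{i+1}\mathcal{M}$) fails to be $\pi$-stable, since $\pi\notin\Z_\ell+\ell^{i+1}\mathcal{M}$. Because an $\ell$-isogeny alters only the $\ell$-part of the conductor, the prime-to-$\ell$ part of the conductor, and hence $D_0$, is constant along the component $V$; this is item (5) in local form and it yields item (1) once levels are identified with $\ell$-adic conductor valuations.

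It remains to assemble the volcano. Call a vertex of $V$ a \emph{surface} vertex if $i=0$; by the local count these are exactly the vertices with no ascending edge, and from any vertex one reaches the surface by following the unique ascending edge at most $i$ times (staying inside $V$), so $V$ has a nonempty surface. Since $\Ell_{\OOO_0}(\F_q)$ is a $\CL(\OOO_0)$-torsor on which horizontal $\ell$-isogenies act as multiplication by $[\mfk{l}]^{\pm1}$ for an ideal $\mfk{l}$ of norm $\ell$, and since (again by the local count) any below-surface excursion of a path through $V$ returns to its starting surface vertex, the surface of $V$ is a single $\langle[\mfk{l}]\rangle$-orbit: it has size $\ord([\mfk{l}])$ when $\kron{D_0}{\ell}\ge 0$ and size $1$ when $\kron{D_0}{\ell}=-1$ (item (3)), and its induced subgraph is regular of degree $1+\kron{D_0}{\ell}$ (item (2)). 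Setting $V_i$ to be the vertices of $\ell$-adic conductor valuation $i$, the local count shows $V_i=\emptyset$ for $i>d$, every vertex of $V_i$ with $i\ge1$ has a unique neighbor in $V_{i-1}$, and every vertex of $V_i$ with $i<d$ has total degree $\ell+1$ with its remaining $\ell$ neighbors in $V_{i+1}$; these are exactly the axioms of an $\ell$-volcano of depth $d$, and $2d=v_\ell\big((t^2-4q)/D_0\big)$ gives item (4). Item (1) then follows because all vertices of a given $V_i$ share $D_0$ and the prime-to-$\ell$ conductor, hence the same order $\OOO_i$.

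I expect the main obstacle to be the local count: each of the three cases for $\mathcal{M}$ --- the ramified case $\kron{D_0}{\ell}=0$ in particular --- needs its own explicit sublattice bookkeeping, and the delicate point is extracting the floor. One must argue carefully that at the bottom level descending edges disappear not because a sublattice's multiplier ring could drop below $\Z[\pi]$ (impossible, since $\pi$ lies in the multiplier ring of every $\pi$-stable sublattice) but because the relevant sublattice fails to be $\pi$-stable at all. Pinning down the precise dictionary between ``$\F_q$-rational $\ell$-isogenies up to isomorphism'' and ``$\F_q$-roots of $\Phi_\ell(j(E),Y)$'' at the excluded values $j=0,1728$ is a second, more routine, point of care.
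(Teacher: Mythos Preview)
The paper does not prove this lemma; it is stated as background material with a citation to Kohel's thesis \cite{kohel1996endomorphism} (see also the later exposition in \cite{sutherland2013isogeny}), and no proof is supplied in the paper itself. So there is nothing in the paper to compare your proposal against.

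That said, your outline is the standard route: reduce to a local count at $\ell$ via $\pi$-stable index-$\ell$ sublattices of $T_\ell(E)$ (or, equivalently after a Deuring lift, complex sublattices), read off the horizontal/ascending/descending trichotomy from the multiplier ring, and then globalize using the $\CL(\OOO_0)$-torsor structure on the surface. This is essentially how Kohel argues, and your identification of the delicate points---the three cases for $\kron{D_0}{\ell}$, and the vanishing of descending edges at the floor because the relevant sublattice fails $\pi$-stability---matches the genuine content of the proof. One small wording issue: your claim that ``any below-surface excursion of a path through $V$ returns to its starting surface vertex'' is what ultimately shows the below-surface part is a union of trees hanging off the crater, but as stated it needs the observation that a descending step followed by the unique ascending step is the dual isogeny, hence returns to the same vertex; you have the ingredients but should make that step explicit when you write it out.
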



Let $\mathcal{G}_{\OOO, m}(\K)$ be the regular graph whose vertices are the elements of $\Ell_\OOO(\K)$, and whose edges are the equivalence classes of horizontal isogenies defined over $\K$ of prime degrees $\leq m$. The following result states that assuming GRH $\mathcal{G}_{\OOO, m}(\K)$ is an expander graph.

\begin{lemma}[\cite{DBLP:conf/asiacrypt/JaoMV05,DBLP:journals/iacr/BonehGKLSSTZ18}]\label{lemma:expander}
Let $q=\#\K$, $\OOO$ be an imaginary quadratic order of discriminant $D$, and $\epsilon$ be a fixed constant. 
Let $m$ be such that $m\geq (\log q)^{2+\epsilon}$. 
Assuming GRH, a random walk on $\mathcal{G}_{\OOO, m}(\K)$ will reach a subset of size $S$ with probability at least $\frac{S}{2|\mathcal{G}_{\OOO, m}(\K)|}$ after $\text{polylog}(q)$ many steps.
Furthermore, for a suitable constant $C$ and any $\delta>0$, assuming GRH, the distribution of a vertice obtained from a random walk on $\mathcal{G}_{\OOO, m}(\K)$ of length $C\cdot \frac{\delta + \log h(D)}{\epsilon \cdot \log\log |D|}$ is $e^{-\delta}$-statistically close to uniform.
\end{lemma}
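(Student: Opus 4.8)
The plan is to recognize the graph $\mathcal{G}_{\OOO,m}(\K)$ as a Cayley graph of the finite abelian group $\CL(\OOO)$, bound its non-trivial eigenvalues under GRH, and then conclude with the textbook $\ell_2$-estimate for the mixing time of a random walk on an expander. To set up the Cayley graph I would use that $\CL(\OOO)$ acts simply transitively on $\Ell_\OOO(\K)$ (Section~\ref{sec:isogenyandclassgroup}) to identify, after fixing a base vertex, the vertex set with $\CL(\OOO)$; by Remark~\ref{remark:idealandisogeny} a horizontal isogeny of split prime degree $\ell\le m$ is the action of a prime $\OOO$-ideal $\mfk{l}$ of norm $\ell$, with dual isogeny the action of $[\mfk{l}]^{-1}$. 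Thus $\mathcal{G}_{\OOO,m}(\K)=\mathrm{Cay}(\CL(\OOO),\mathcal{S})$ for the symmetric connection multiset $\mathcal{S}$ of classes $[\mfk{l}]$ of prime ideals whose norm is a split prime $\le m$; since about half the primes $\ell\le m$ split and each contributes at least one such class, the degree $k=|\mathcal{S}|$ satisfies $k\gg m/\log m$ (this coarse count needs only the prime number theorem).

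Since $\CL(\OOO)$ is abelian and $\mathcal{S}=\mathcal{S}^{-1}$, the adjacency operator of $\mathrm{Cay}(\CL(\OOO),\mathcal{S})$ is symmetric and diagonalized by the characters $\chi\in\widehat{\CL(\OOO)}$, with real eigenvalues $\lambda_\chi=\sum_{[\mfk{l}]\in\mathcal{S}}\chi([\mfk{l}])$; the trivial character $\chi_0$ yields the degree $k$. The heart of the argument is to show $|\lambda_\chi|=O(\sqrt m\,\polylog(m|D|))$ for non-trivial $\chi$. For this I would observe that such a $\chi$ factors through the ring class group of $\OOO$ and hence corresponds to a Hecke character of $K=\Q(\sqrt{D})$ whose conductor divides $(f)$ (writing $D=D_0 f^2$), so the analytic conductor of its Hecke $L$-function $L(s,\chi)$ is $O(|D_0|f^2)=O(|D|)$; applying the truncated explicit formula for $L(s,\chi)$ under GRH (all nontrivial zeros on $\Re s=\tfrac12$) gives the conditional prime-ideal estimate $\sum_{N\mfk{p}\le m}\chi(\mfk{p})\log N\mfk{p}=O(\sqrt m\,\log m\,\log(m|D|))$, which by partial summation becomes $|\lambda_\chi|=O(\sqrt m\,\log(m|D|))$. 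Consequently the normalized second eigenvalue obeys
\[
\mu:=\frac1k\max_{\chi\neq\chi_0}|\lambda_\chi|=O\!\left(\frac{\log(m|D|)\,\log m}{\sqrt m}\right),
\]
and in particular $\mu<1$ means $\mathcal{S}$ generates $\CL(\OOO)$, so the graph is connected and the walk has the uniform distribution $U$ on $\CL(\OOO)$ as its stationary law.

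It remains to feed in the parameters and invoke a standard mixing bound. Since $\Ell_\OOO(\K)\neq\emptyset$, the order $\OOO$ embeds into $\Z[\pi_E]$, whose discriminant is $t^2-4q$ with $|t|\le 2\sqrt q$, so $|D|\le 4q$ and $\log(m|D|)=O(\log q)$; hence $m\ge(\log q)^{2+\epsilon}$ forces $\mu=O\big(\log\log q/(\log q)^{\epsilon/2}\big)$, which is $<1$ for large $q$ and moreover satisfies $\log(1/\mu)=\Omega(\epsilon\log\log|D|)$. Writing $n=|\mathcal{G}_{\OOO,m}(\K)|=h(D)$ for the number of vertices, the standard spectral bound $\|\pi_t-U\|_2\le\mu^t$ for the $t$-step walk distribution $\pi_t$ (from any start) gives $\|\pi_t-U\|_\infty\le\mu^t$ and $\|\pi_t-U\|_{\mathrm{TV}}\le\tfrac12\sqrt n\,\mu^t$. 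Taking $t=O(\log n/\log(1/\mu))=\polylog(q)$ then makes $\pi_t(v)\ge 1/n-\mu^t\ge 1/(2n)$ for every vertex $v$, so the walk hits any $S$-element subset with probability $\ge S/(2n)$, which is the first assertion; and taking $t\ge C\cdot\frac{\delta+\log h(D)}{\epsilon\log\log|D|}\ge\frac{\delta+\tfrac12\log n}{\log(1/\mu)}$ for a suitable absolute constant $C$ (legitimate by the previous sentence) gives $\|\pi_t-U\|_{\mathrm{TV}}\le\tfrac12\sqrt n\,\mu^t\le e^{-\delta}$, the second assertion.

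The step I expect to be the main obstacle is the non-trivial eigenvalue bound: one must correctly match the characters of $\CL(\OOO)$ with Hecke characters of $K$ and control their conductors by $O(|D|)$ — so that the error term in the GRH-conditional prime-ideal count does not deteriorate with the (unknown, possibly large) class number — and then run the explicit-formula / effective-Chebotarev estimate while tracking the joint dependence on $m$ and $|D|$. Everything afterwards (bounding $|D|$ by $4q$, deducing connectivity, and the random-walk mixing estimate) is routine. This is essentially the content of \cite{DBLP:conf/asiacrypt/JaoMV05}, with the quantitative walk-length statement sharpened in \cite{DBLP:journals/iacr/BonehGKLSSTZ18}.
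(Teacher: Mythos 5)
The paper does not actually prove this lemma; it imports it from \cite{DBLP:conf/asiacrypt/JaoMV05,DBLP:journals/iacr/BonehGKLSSTZ18}, and your sketch is essentially the argument of those references: identify $\mathcal{G}_{\OOO,m}(\K)$ with a Cayley graph of $\CL(\OOO)$ via the simply transitive action, bound the nontrivial eigenvalues (character sums over split prime ideals of norm $\le m$) using the GRH-conditional explicit formula for the associated Hecke $L$-functions of conductor $O(|D|)$ with $|D|\le 4q$, and conclude with the standard spectral mixing estimate. The one small slip is the parenthetical claim that the degree bound $k\gg m/\log m$ ``needs only the prime number theorem'': counting primes $\le m=(\log q)^{2+\epsilon}$ that split in a field whose discriminant may be as large as $4q$ itself requires the same GRH character-sum estimate (harmless here, since GRH is assumed throughout the lemma).
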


\paragraph{More about the endomorphism ring from a computational perspective. }
Given an ordinary curve $E$ over $\K$, its endomorphism ring $\OOO$ can be determined by first computing the trace $t$ of Frobenius endomorphism $\pi$, then computing $t^2 - 4q = v^2 D_0$, where $v^2 D_0$ is the discriminant of $\Z[\pi]$, $\Z[\pi]\subseteq \OOO\subseteq \OOO_K$, and $K = \Q(\sqrt{D_0})$. The discriminant of $\OOO$ is then $u^2 D_0$ for some $u\mid v$. When $v$ has only few small factors, determining the endomorphism ring can be done in time polynomial in $\log(q)$ \cite{kohel1996endomorphism}.
In general it can take up to subexponential time in $\log(q)$ under GRH \cite{bissonS2011computing,bisson2011computing}.

Let $\OOO$ be an imaginary quadratic order of discriminant $D$. Let $H_D(x)$ be the Hilbert class polynomial defined by
\[ H_D(x)  = \prod_{j(E)\in\Ell_\OOO( \C )}(x - j(E)) . \]
$H_D$ has integer coefficients and is of degree $h(D)$. Furthermore, it takes $O(|D|^{1+\epsilon} )$ bits of storage. 
Under GRH, computing $H_D$ mod $q$ takes $O(|D|^{1+\epsilon})$ time and $O(|D|^{1/2+\epsilon} \log q)$ space \cite{sutherland2011computing}. In reality $H_D$ is only feasible for small $|D|$ since it takes a solid amount of space to store $H_D$. Over $\Z[x]$, \cite{sutherland2011computing} is able to compute $H_D$ for $|D|\approx 10^{13}$ and $h(D) \approx 10^6$. Over $\F_q[x]$, \cite{sutherland2012accelerating} is able to compute $H_D$ for $|D| \approx 10^{16}$ with $q \approx 2^{256}$.


\section{Isogeny graphs over composite moduli}\label{sec:neighbor_vol}

Let $p, q$ be distinct primes and set $N=pq$. We will be using elliptic curves over the ring $\ZNZ$. We will not be needing a formal treatment of elliptic curves over rings as such a discussion would take us too far afield. Instead, we will be defining objects and quantities over $\ZNZ$ by taking the $\CRT$ of the corresponding ones over $\F_p$ and $\F_q$, which will suffice for our purposes. This follows the treatment given in \cite{lenstra1987factoring}.

Since the underlying rings will matter, we will denote an elliptic curve over a ring $R$ by $E(R)$. If $R$ is clear from the context we shall omit it from the notation. To begin, let us remark that the number of points $\#(E(\ZNZ))$ is equal to $\#(E(\F_p)) \cdot \#(E(\F_q))$, and the $j$-invariant of $E(\ZNZ)$ is $\CRT(p, q; j(E(\F_p)), j(E(\F_q)))$.

\subsection{Isogeny graphs over $\ZNZ$}\label{sec:neighbor_vol_def}

Let $N$ be as above.
For every prime $\ell\nmid N$ the isogeny graph $G_\ell(\Z/N\Z)$ can be defined naturally as the graph tensor product of $G_\ell(\F_p)$ and $G_\ell(\F_q)$. 


\begin{definition}[$\ell$-isogeny graph over $\Z/N\Z$]\label{def:isoggraphN}
Let $\ell$, $p$, and $q$ be distinct primes and let $N=pq$. The $\ell$-isogeny graph $G_\ell(\Z/N\Z)$ has 

\begin{itemize}
\item The vertex set of $G_\ell(\ZNZ)$ is $\ZNZ$, identified with  $\Z/p\Z\times\Z/q\Z$ by $\CRT$,
\item  Two vertices $v_1=(v_{1,p},v_{1,q})$ and $v_2=(v_{2,p},v_{2,q})$ are connected  if and only if $v_{1,p}$ is connected to $v_{2,p}$ in $G_{\ell}(\F_p)$ and $v_{1,q}$ is connected to $v_{2,q}$ in $G_\ell(\F_q)$.
\end{itemize}
\end{definition}

Let us make a remark for future consideration. In the construction of groups with infeasible inversion, we will be working with special subgraphs of $G_\ell(\Z/ N\Z)$, where the vertices over $\F_p$ and $\F_q$ correspond to $j$-invariants of curves whose endomorphism rings are the same imaginary quadratic order $\OOO$. Nevertheless, this is a choice we made for convenience, and it does not hurt to define the computational problems over the largest possible graph and to study them first.

\subsection{The $\ell$-isogenous neighbors problem over $\ZNZ$}\label{sec:neighbor_vol_l}

\begin{definition}[The $\ell$-isogenous neighbors problem]\label{def:problem:basic}
Let $p,q$ be two distinct primes and let $N = p q$. Let $\ell$ be a polynomially large prime s.t. $\gcd(\ell, N)=1$. 
The input of the $\ell$-isogenous neighbor problem is $N$ and an integer $j\in \Z/N\Z$ such that there exists (possibly more than) one integer $j'$ that $\Phi_\ell( j, j' ) = 0$ over $\Z/N\Z$. 
The problem asks to find such integer(s) $j'$.
\end{definition}

The following theorem shows that the problem of finding \emph{all} of the $\ell$-isogenous neighbors is at least as hard as factoring $N$.

\begin{theorem}\label{thm:basicequalfactoring}
If there is a probabilistic polynomial time algorithm that finds all the $\ell$-isogenous neighbors in Problem~\ref{def:problem:basic}, then there is a probabilistic polynomial time algorithm that solves the integer factorization problem. 
\end{theorem}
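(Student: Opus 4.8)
The plan is to reduce factoring to the problem of finding all $\ell$-isogenous neighbors over $\Z/N\Z$. Suppose we have a PPT algorithm $\mathcal{A}$ that, on input $(N, j)$ where $j$ has at least one $\ell$-isogenous neighbor mod $N$, outputs the full list of such neighbors. The key observation is that, via the Chinese remainder theorem, a $j$-invariant $j = \CRT(p,q; j_p, j_q)$ over $\Z/N\Z$ has, as its $\ell$-isogenous neighbors, exactly the CRT-combinations $\CRT(p,q; j'_p, j'_q)$ where $j'_p$ ranges over the roots of $\Phi_\ell(j_p, Y)$ in $\F_p$ and $j'_q$ ranges over the roots of $\Phi_\ell(j_q, Y)$ in $\F_q$. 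So if $j_p$ has $r_p$ neighbors over $\F_p$ and $j_q$ has $r_q$ neighbors over $\F_q$, then $j$ has $r_p \cdot r_q$ neighbors over $\Z/N\Z$, arranged in a ``product grid'' under CRT.

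The heart of the reduction is to engineer a situation where this grid is genuinely two-dimensional and asymmetric, i.e.\ where $r_p \neq r_q$ (say $r_p = \ell+1$ while $r_q \in \{1,2\}$, or more simply $r_p \geq 2$ while $r_q = 0$ on one coordinate isn't allowed since then $j$ has no neighbor — so instead $r_p \geq 2, r_q \geq 1$ with $r_p \neq r_q$, or better: arrange $r_p \geq 2$ and $r_q = 1$). First I would pick a small prime $\ell$ and sample a random curve $E$ over $\Z/N\Z$ (e.g.\ pick random $a, b \in \Z/N\Z$ and set $E: y^2 = x^3 + ax + b$, checking the discriminant is a unit). Over $\F_p$ and $\F_q$ the reduction of $E$ lands at some vertex of $G_\ell(\F_p)$ and $G_\ell(\F_q)$; the number of $\ell$-isogenous neighbors over $\F_p$ is $0$, $1$, $2$, or $\ell+1$ depending on the splitting behavior of $\ell$ in the endomorphism ring and the position of the vertex on its volcano (by Lemma~\ref{lemma:isograph} and the discussion preceding it). For a random curve these local counts are essentially independent, so with noticeable probability we get $r_p \neq r_q$ with both at least $1$. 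Feed $(N, j(E))$ to $\mathcal{A}$; if it returns a list that does not have size $r_p r_q$ for a ``square'' reason — concretely, whenever the returned list does not form a single CRT-coset of a subgroup, or simply whenever $|\text{list}|$ is not a perfect square while we can detect the asymmetry — we extract a factor. The cleanest extraction: take any two neighbors $j'_1 = \CRT(p,q; \alpha_p, \alpha_q)$ and $j'_2 = \CRT(p,q; \alpha_p, \beta_q)$ from the list that agree mod $p$ but differ mod $q$ (these exist precisely when $r_p \geq 1$ and $r_q \geq 2$, and $\mathcal{A}$ hands us the whole list so we can search for such a pair); then $\gcd(j'_1 - j'_2, N) = q$, a nontrivial factor.

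So the reduction is: loop over small primes $\ell$, sample random curves $E$ over $\Z/N\Z$, compute $j(E)$, call $\mathcal{A}(N, j(E))$ to get the neighbor list $L$, and for every pair $j'_1, j'_2 \in L$ compute $\gcd(j'_1 - j'_2, N)$; output any nontrivial factor found. The correctness argument splits into: (i) $\mathcal{A}$'s preconditions are met infinitely often — we need $j(E)$ to actually have a neighbor mod $N$, i.e.\ $r_p \geq 1$ and $r_q \geq 1$, which holds with constant probability over random $E$ and random small $\ell$ (one can even force it by walking, but a counting/averaging argument over $\ell$ suffices); (ii) conditioned on that, with noticeable probability the list contains a pair differing mod exactly one of $p, q$ — this is where we need $\max(r_p, r_q) \geq 2$, which again happens with constant probability since, e.g., when $\ell$ splits in the endomorphism ring and the vertex is on the crater of a volcano of size $> 2$, or is an interior vertex, the neighbor count is $\ell + 1 \geq 3$. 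The probabilities get amplified to $1 - \negl(\secp)$ by repeating $\poly(\secp)$ times.

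The main obstacle is item (ii): controlling the local neighbor counts $r_p, r_q$ for a \emph{random} curve well enough to guarantee $\max(r_p, r_q) \geq 2$ with non-negligible probability, and simultaneously $\min(r_p, r_q) \geq 1$ so that $\mathcal{A}$ is even obligated to answer. One has to argue, using the volcano structure (Lemma~\ref{lemma:isograph}) and the distribution of Frobenius traces / endomorphism rings of random curves over $\F_p$, that a constant fraction of curves sit at a vertex with at least two horizontal or descending $\ell$-neighbors over $\F_p$ while having at least one $\ell$-neighbor over $\F_q$ — or, more robustly, to sidestep this by noting that $\mathcal{A}$ must succeed on \emph{all} valid inputs, so it suffices to exhibit \emph{one} well-chosen family of inputs (e.g.\ built from CM curves via the Hilbert class polynomial $H_D \bmod N$, choosing $D$ so that $\ell$ splits and the class number is $> 2$) on which the asymmetry is forced; constructing such inputs without knowing $p, q$ is the delicate point, but $H_D \bmod N = \CRT(H_D \bmod p, H_D \bmod q)$ makes it tractable. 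A secondary (routine) obstacle is handling the degenerate cases $j \in \{0, 1728\}$ and non-unit discriminants, which we simply exclude by resampling.
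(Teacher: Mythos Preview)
Your overall strategy---sample a random curve over $\Z/N\Z$, query the oracle on its $j$-invariant, and compute $\gcd(j'_1-j'_2,N)$ over all pairs of returned neighbors---is exactly what the paper does. Two corrections.

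First, the asymmetry condition $r_p \neq r_q$ and the ``non-perfect-square list size'' detection are unnecessary detours. As soon as $r_p \geq 2$ and $r_q \geq 2$ (indeed even $\min(r_p,r_q)\geq 1$ and $\max(r_p,r_q)\geq 2$), the CRT product grid of neighbors already contains a pair agreeing modulo one prime and differing modulo the other, regardless of whether $r_p = r_q$. Trying all pairs, as you eventually do, suffices; the CM/Hilbert class polynomial alternative is an unnecessary complication.

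Second, and more substantively, the claim that a random curve over $\F_p$ has $r_p\geq 2$ ``with constant probability'' is asserted but not proven, and this is precisely where the paper's technical work lies. The paper establishes only $\Omega(1/\log p)$, not a constant, via Theorem~\ref{thm:ratioD}: using Lenstra's point-count distribution estimate (Lemma~\ref{lemma:num_of_points_dist}), one shows that an $\Omega(1/\log p)$ fraction of curves over $\F_p$ have trace $t$ with $\kron{t^2-4p}{\ell}=1$ and sit on a crater of size $\geq 3$ (for $\ell\geq 5$, with separate handling of $\ell=2,3$ via vertical isogenies), hence have at least two $\ell$-neighbors. Independence over $\F_p$ and $\F_q$ then yields success probability $\Omega(1/\log^2 N)$ per trial. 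You correctly flag this step as ``the main obstacle'' but do not supply the argument; filling it in is the entire content of Theorem~\ref{thm:ratioD}.
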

The idea behind the reduction is as follows. Suppose it is efficient to pick an integer $j$ over $\ZNZ$, let $j_p = j \pmod p$ and $j_q = j \pmod q$, such that $j_p$ has at least two distinct neighbors in $G_{\ell}(\F_p)$, and $j_q$ has at least two distinct neighbors in $G_{\ell}(\F_q)$. In this case if we are able to find \emph{all} the integer solutions $j'\in \Z/N\Z$ such that $\Phi_\ell( j, j' ) = 0$ over $\Z/N\Z$, then there exist two distinct integers $j'_1$ and $j'_2$ among the solutions such that $N>\gcd(j'_1-j'_2, N)>1$. One can also show that finding \emph{one} of the integer solutions is hard using a probabilistic argument, assuming the underlying algorithm outputs a random solution when there are multiple ones.


In the reduction we pick the elliptic curve $E$ randomly, so we have to make sure that for a non-negligible fraction of the elliptic curves $E$ over $\F_p$, $j(E)\in G_{\ell}(\F_p)$ has at least two neighbors. The estimate for this relies on the following lemma:
\begin{lemma}[\cite{lenstra1987factoring} (1.9)]\label{lemma:num_of_points_dist}
There exists an efficiently computable positive constant $c $ such that for each prime number $p>3$, for a set of integers $S\subseteq \set{s\in \Z\,\mid\, | p+1-s |<\sqrt{p} }$, we have 
\[ \# ' \set{ E \mid E \text{ is an elliptic curve over } \F_p,\,\, \# E(\F_p)\in S }_{/ \simeq_{\F_p}}
    \geq c\, (\# S-2) \frac{ \sqrt{p}}{\log p} . \]
where $\# '\set{ E }_{/ \simeq_{\F_p}}$ denotes the number of isomorphism classes of elliptic curves over $\F_p$, each counted with weight $(\# \Aut E)^{-1}$.
\end{lemma}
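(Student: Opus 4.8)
The plan is to convert the weighted count into class numbers by the theory of complex multiplication, and then bound those class numbers from below via the analytic class number formula. Write $t = p+1-s$, so that the hypothesis on $S$ becomes $0\le |t|<\sqrt p$. The single trace $t=0$ (i.e. $s=p+1$) corresponds to supersingular curves, whose total $\#\Aut$-weighted count is $\asymp p$ by the Deuring--Eichler mass formula, hence $\gg \sqrt p/\log p$; so it may be kept. For every other $t$ in the window we have $0<|t|<\sqrt p<p$, hence $p\nmid t$, and Deuring's theorem gives
\[ \#'\set{ E/\F_p \,:\, \#E(\F_p)=p+1-t }_{/\simeq_{\F_p}} \;=\; H(t^2-4p), \]
where $H(\cdot)$ is the Hurwitz--Kronecker class number; the weighting by $(\#\Aut E)^{-1}$ on the curve side matches exactly the standard $\tfrac12,\tfrac13$-weighting of $\SL_2(\Z)$-classes of positive definite binary quadratic forms. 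Writing $t^2-4p=D_0g^2$ with $D_0$ the fundamental discriminant and $g$ the conductor, the single term $f=g$ of $H(t^2-4p)=\sum_{f\mid g}h_w(D_0f^2)$ already yields $H(t^2-4p)\ge h_w(t^2-4p)\ge \tfrac13\,h(t^2-4p)$, where $h_w=\tfrac{2}{w}h$. So it remains to prove $h(t^2-4p)\ge c'\sqrt p/\log p$ for all but a bounded number of the traces $t$ with $0<|t|<\sqrt p$.

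Here the restriction $|s-(p+1)|<\sqrt p$ (rather than the full Hasse interval) is used in an essential way: it forces $3p<|t^2-4p|<4p$, so $\sqrt{|t^2-4p|}\asymp\sqrt p$, $\log|t^2-4p|=\log p+O(1)$, and $\log|D_0|\le\log p+O(1)$; thus one may pass freely between $\log p$ and $\log|D_0|$ up to constants, which would fail near the Hasse boundary where $|t^2-4p|$ can be as small as $1$. Combining Dirichlet's analytic class number formula $h(D_0)=\tfrac{w(D_0)}{2\pi}\sqrt{|D_0|}\,L(1,\chi_{D_0})$ with the conductor relation \eqref{eqn:classnumbernonmaximal} gives
\[ h(t^2-4p)\;=\;\frac{w(t^2-4p)}{2\pi}\,\sqrt{|t^2-4p|}\;L(1,\chi_{D_0})\prod_{\ell\mid g}\Big(1-\tfrac{1}{\ell}\kron{D_0}{\ell}\Big). \]
Every factor on the right is $\asymp\sqrt p$ except $L(1,\chi_{D_0})$ and the Euler product over $\ell\mid g$; the product is at least $\phi(g)/g$, which is bounded below by an absolute constant unless $g$ is divisible by many small primes — a case one handles separately by noting that such $g$ are large, so $g^2\mid t^2-4p<4p$ is extremely restrictive and leaves only $O(1)$ admissible traces. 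Hence the whole problem reduces to an effective lower bound $L(1,\chi_{D_0})\gg 1/\log|D_0|$.

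This last point is the main obstacle, and it is also the reason for the ``$\#S-2$'' in the statement. An \emph{effective} bound $L(1,\chi_D)\ge c_1/\log|D|$ holds for all real primitive characters $\chi_D$ \emph{except possibly one} in the relevant range — the Landau--Siegel exceptional modulus — by the Landau--Page argument; note that Siegel's theorem would also cover that exception but is ineffective, which is precisely why the constant $c$ in the lemma is efficiently computable only after discarding the exception. A fundamental discriminant $D_0^\ast$ carrying such an exceptional zero can be the fundamental part of $t^2-4p$ for at most a bounded number — in fact at most two — of the traces in the window $0<|t|<\sqrt p$, since $t^2+|D_0^\ast|g^2=4p$ has very few solutions in such a short range. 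Excluding the corresponding at most two values of $s$ from $S$, every remaining $s\in S$ contributes $H(t^2-4p)\ge\tfrac13 h(t^2-4p)\ge c\,\sqrt p/\log p$; summing over $S$ yields the claimed bound. Tracking the constants through Deuring's theorem, \eqref{eqn:classnumbernonmaximal}, and the class number formula makes $c$ explicit, and the Landau--Page argument makes it effective.
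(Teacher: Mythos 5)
First, a framing remark: the paper does not prove this lemma at all — it imports it verbatim as Proposition (1.9) of Lenstra's \emph{Factoring integers with elliptic curves} — so you are effectively being compared against Lenstra's argument. Your architecture is the standard one and matches his: Deuring's correspondence reduces the weighted curve count for trace $t$ to the Hurwitz--Kronecker class number $H(t^2-4p)$, the class number formula converts this to $\sqrt{p}\,L(1,\chi_{D_0})$ up to controlled factors, and the Landau--Page effective bound $L(1,\chi_D)\gg 1/\log|D|$ with at most one exceptional discriminant is exactly what produces both the effectivity of $c$ and the ``$\#S-2$''. Your observations about why the window $|t|<\sqrt p$ (rather than the full Hasse range) matters are also correct.

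There is, however, one step that genuinely fails. You keep only the maximal-conductor term of $H(t^2-4p)=\sum_{d\mid g}h_w(D_0d^2)$ and are then forced to bound $\prod_{\ell\mid g}\bigl(1-\tfrac{1}{\ell}\kron{D_0}{\ell}\bigr)\ge \varphi(g)/g$ from below. This is \emph{not} bounded below by an absolute constant — it is only $\gg 1/\log\log g$ — and your proposed rescue (that the offending $g$ are large, so $g^2\mid t^2-4p$ leaves only $O(1)$ admissible traces) is false: for a fixed threshold $c_0$ the bad conductors are precisely those divisible by every prime up to a fixed bound $y(c_0)$, i.e. by the fixed integer $P=\prod_{\ell\le y}\ell$, and the congruence $t^2\equiv 4p\pmod{P^2}$ has on the order of $2^{\pi(y)}\sqrt{p}/P^2$ solutions with $0<t<\sqrt p$ — a positive proportion of all traces, not $O(1)$. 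As written your bound is therefore only $\sqrt p/(\log p\cdot\log\log p)$ per trace, short of the claim, and this cannot be repaired by excising finitely many $s$. The fix is to keep the whole Hurwitz sum: since $h_w(D_0d^2)=h_w(D_0)\,d\prod_{\ell\mid d}(1-\tfrac{1}{\ell}\kron{D_0}{\ell})\ge h_w(D_0)\,\varphi(d)$ and $\sum_{d\mid g}\varphi(d)=g$, one gets $H(t^2-4p)\ge h_w(D_0)\,g\ge \tfrac{1}{3\pi}\sqrt{|t^2-4p|}\,L(1,\chi_{D_0})$ with no Euler-product loss, and the rest of your argument goes through. Two smaller corrections: the $t=0$ contribution is not ``$\asymp p$'' — the Eichler mass $(p-1)/24$ counts supersingular curves up to $\overline{\F}_p$-isomorphism, whereas the weighted number of $\F_p$-classes with exactly $p+1$ points is again a class number $\asymp\sqrt{p}\,L(1,\chi)$, so $s=p+1$ should be folded into the same analysis (the $-2$ slack still covers it). And ``at most two values of $s$ for the exceptional $D_0^{\ast}$'' deserves its one-line justification: because $p$ is prime, the total number of representations of $4p$ by forms of discriminant $D_0^{\ast}$ is $w(D_0^{\ast})\bigl(1+\kron{D_0^{\ast}}{p}\bigr)\le 4$, so at most one value of $|t|$ has $t^2-4p$ with fundamental part $D_0^{\ast}$, hence at most two values $s=p+1\pm t$.
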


\begin{theorem}\label{thm:ratioD}
Let $p,\ell$ be primes such that $6\ell<\sqrt{p}$. The probability that for a random elliptic curve $E$ over $\F_p$ (i.e. a random pair $(a,b)\in\F_p\times \F_p$ such that $4a^3+27b^2\neq 0$) $j(E)\in G_{\ell}(\F_p)$ having at least two neighbors is $\Omega(\frac{1}{\log p})$.
\end{theorem}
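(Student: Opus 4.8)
The plan is to reduce the statement to a counting estimate for $j$-invariants and then apply Lemma~\ref{lemma:num_of_points_dist}. Since the event ``$j(E)$ has at least two neighbors in $G_\ell(\F_p)$'' depends on $(a,b)$ only through $j(E_{a,b})$, and since every $j_0\in\F_p$ is the $j$-invariant of exactly $p-1$ admissible pairs $(a,b)$ (two quadratic twists, each an $\F_p^{\ast}$-orbit of size $(p-1)/2$, when $j_0\neq 0,1728$, and $p-1$ also when $j_0\in\set{0,1728}$), while the total number of admissible pairs is $p^2-O(p)$, the probability in question equals $\frac{\#J}{p}\,(1+o(1))$, where $J\subseteq\F_p$ is the set of $j$-invariants of ordinary curves having at least two $\ell$-isogenous neighbors over $\F_p$. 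Hence it suffices to produce $\Omega(p/\log p)$ such $j$-invariants, and by Lemma~\ref{lemma:num_of_points_dist} it is in turn enough to exhibit a set $S\subseteq\set{s\in\Z:\,|p+1-s|<\sqrt p}$ with $\#S=\Omega(\sqrt p)$ all of whose curves $E$ have at least two distinct $\ell$-neighbors and satisfy $j(E)\notin\set{0,1728}$.

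To find such traces, assume first $\ell\geq 5$, or $\ell=3$ with $\kron{p}{3}=-1$. Fix $t$ with $0<|t|<\sqrt p$ and $t^2-4p$ squarefree; then $t$ is odd, $D_0:=t^2-4p\equiv 1\pmod 4$ is a fundamental discriminant, and every curve $E/\F_p$ of trace $t$ has $\Endo(E)\simeq\OOO_K$ with $K=\Q(\sqrt{D_0})$. By Kohel's theorem (Lemma~\ref{lemma:isograph}) the component of $G_\ell(\F_p)$ through $j(E)$ is a single cycle of degree $1+\kron{D_0}{\ell}$ and length equal to the order of $[\mfk{l}]$ in $\CL(D_0)$, where $\mfk{l}$ is a prime of norm $\ell$. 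I claim that if moreover $\kron{D_0}{\ell}=1$ then this cycle has length at least $3$, so $E$ has exactly two distinct $\ell$-neighbors, $\mfk{l}\ast j(E)$ and $\bar{\mfk{l}}\ast j(E)$, which differ because $\CL(\OOO_K)$ acts freely and $[\mfk{l}^2]\neq 1$. Indeed, if $[\mfk{l}]$ had order at most $2$ then $\mfk{l}^2$ would be principal and $\ell^2=N(\alpha)$ for some $\alpha\in\OOO_K$; but the norm form of $\OOO_K$ takes values $\geq|D_0|/4$ on elements outside $\Z$, and $|D_0|=4p-t^2>3p>4\ell^2$ by the hypothesis $6\ell<\sqrt p$, so $\alpha=\pm\ell\in\Z$, whence $\mfk{l}^2=(\ell)=\mfk{l}\bar{\mfk{l}}$ and $\mfk{l}=\bar{\mfk{l}}$ --- contradicting that $\ell$ splits in $\OOO_K$. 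The same inequality $|D_0|>4\ell^2\geq 16$ rules out $\Endo(E)\simeq\Z[i]$ or $\Z[\zeta_3]$, so $j(E)\neq 1728,0$.

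It remains to count traces. Set $S_0=\set{t:\,0<|t|<\sqrt p,\ t^2-4p\text{ squarefree},\ \kron{t^2-4p}{\ell}=1}$ and $S=\set{p+1-t:\,t\in S_0}$. The last condition depends only on $t\bmod\ell$ and holds for exactly $(\ell-2-\kron{p}{\ell})/2$ residues, which is positive under our assumption on $\ell$. A standard squarefree sieve gives $\#S=\#S_0=\Omega(\sqrt p)$: the number of $t$ with $|t|<\sqrt p$ for which $t^2-4p$ is not squarefree is at most $\sum_{q\text{ prime}}\#\set{|t|<\sqrt p:\,q^2\mid t^2-4p}\leq\bigl(4\sum_q q^{-2}+o(1)\bigr)\sqrt p$, and since $4\sum_q q^{-2}<2$ while there are $2\sqrt p-O(1)$ integers $t$ with $|t|<\sqrt p$, squarefree values form a positive proportion; imposing also the congruence modulo $\ell$ (via inclusion--exclusion over square divisors, truncating the M\"obius sum at $d\le p^{1/3}$ and bounding the tail from large square divisors directly) still leaves $\Omega(\sqrt p)$ values. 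Lemma~\ref{lemma:num_of_points_dist} then gives $\#'\set{E:\,\#E(\F_p)\in S}_{/\simeq_{\F_p}}\ge c(\#S-2)\frac{\sqrt p}{\log p}=\Omega(p/\log p)$; by the previous paragraph these curves contribute distinct elements to $J$, so $\#J=\Omega(p/\log p)$ and we are done for these $\ell$. The remaining cases $\ell=2$ and ($\ell=3$ with $\kron{p}{3}=1$), in which no split prime of norm $\ell$ exists in the relevant maximal orders, are handled by instead taking traces $t$ with $\ell^2\mid t^2-4p$, so the $\ell$-volcano has positive depth: curves strictly above its floor have $\ell+1\ge 3$ isogenous curves, hence at least two distinct neighbors, they form a fixed positive fraction (at least $\frac{1}{\ell+2}$) of all curves of that trace, and the number of admissible $t$ is $\Omega(\sqrt p/\ell^2)$ --- again enough, via Lemma~\ref{lemma:num_of_points_dist}, for $\Omega(p/\log p)$.

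The structural part above --- Kohel's theorem plus the elementary norm-form argument --- is routine. The real work is the estimate $\#S=\Omega(\sqrt p)$ with constants uniform in $p$: the naive M\"obius and union bounds for squarefree values of $t^2-4p$ carry error terms of size $\asymp\sqrt p\log p$, larger than the main term, and must be truncated carefully, with the contribution of large square divisors bounded by hand. The hypothesis $6\ell<\sqrt p$ is used precisely to guarantee $|D_0|>4\ell^2$ for all $t$ with $|t|<\sqrt p$, which lets the whole argument stay within maximal orders and avoids any bookkeeping of curves with tiny endomorphism discriminant (and correspondingly short isogeny cycles).
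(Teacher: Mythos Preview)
Your proof is correct and shares the paper's high-level plan---reduce to Lenstra's curve-counting lemma (Lemma~\ref{lemma:num_of_points_dist}), treat $\ell\ge5$ and $(\ell=3,\kron{p}{3}=-1)$ via horizontal isogenies on the crater, and fall back to vertical isogenies for $\ell=2$ and $(\ell=3,\kron{p}{3}=1)$---but it diverges from the paper in how the crater-size obstruction is handled in the main case. The paper imposes \emph{no} squarefreeness on $t^2-4p$: it counts all traces with $\kron{t^2-4p}{\ell}=1$ and then bounds the exceptional curves separately, using a norm-form bound to show that craters of size~$1$ force $|D|$ small (hence few $j$-invariants), and invoking genus theory (the $2$-torsion of $\CL(D_0)$ is supported on ramified primes) to rule out craters of size~$2$. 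You instead restrict to squarefree $t^2-4p$, which pins $\Endo(E)=\OOO_K$ with $D_0=t^2-4p$ exactly, and then the single inequality $N(\alpha)\ge|D_0|/4>\ell^2$ dispatches both the order-$1$ and order-$2$ cases at once. The trade-off is that you buy this clean unified argument at the cost of a squarefree sieve on $t^2-4p$, which (as you correctly flag) needs truncation since the na\"ive per-prime $O(1)$ error sums to $\Theta(\sqrt p/\log p)$; the paper's route avoids any sieve but uses genus theory as a black box. Your treatment of the residual small-$\ell$ cases, including the explicit $\tfrac{1}{\ell+2}$ lower bound on the non-floor fraction, matches the paper's in spirit and is in fact slightly more quantitative.
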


\ifnum\eprint=0
Due to the page limitation we refer the readers to the full version for the proof of Theorem~\ref{thm:ratioD}.

\else

\begin{proof}[Proof of Theorem~\ref{thm:ratioD}]

We first give a lower bound on the number of ordinary elliptic curves over $\F_p$ whose endomorphism ring has discriminant $D$ such that $\kron{D}{\ell} = 1$. If for some pair of $(\ell, p)$ there are not enough elliptic curves over $\F_p$ with two horizontal $\ell$-isogenies then we count the elliptic curves with vertical $\ell$-isogenies.

We start with estimating the portion of $t\in [\ell]$ that satisfies $\kron{t^2 - 4p}{\ell} = 1$: 
\begin{equation}  
\Pr_{t\in[\ell]}\left[ \kron{t^2 - 4p}{\ell} = 1 \right] = 
\begin{cases}
	0 							& \ell = 2 \\
	\frac12-\frac{3}{2\ell}	& \ell > 2 \text{ and } \kron{4p}{\ell} = 1 \\
	\frac12- \frac{ 1}{2\ell}		& \ell > 2 \text{ and } \kron{4p}{\ell} = -1
\end{cases}
\end{equation} 
where the last two equations follows the identity\footnote{Derived from Theorem~19 in \url{http://www.imomath.com/index.php?options=328&lmm=0}.} $\sum_{t = 1}^{\ell} \kron{t^2 - 4p}{\ell} = -1$.
Hence for $\ell\geq 5$ or $\ell = 3$ and $\kron{4p}{3} = \kron{p}{3} = -1$, no less than $\frac12-\frac{ 3}{2\ell}$ of the $t\in[\ell]$ satisfy $\kron{t^2 - 4p}{\ell} = 1$. 

We now estimate the number of elliptic curves over $\F_p$ whose discriminant of the endomorphism ring $D$ satisfies $\kron{D}{\ell} = 1$. To do so we set $ r = \lfloor \sqrt{p} / \ell \rfloor $, and use Lemma~\ref{lemma:num_of_points_dist} by choosing the set $S$ as 
\[ S = \set{ s ~\bigg\vert \kron{(p+1-s)^2 - 4p}{\ell} = 1, s\in\set{ (p+1)-r\cdot\ell, ..., (p+1)+r\cdot\ell}\setminus\set{p+1}  }.  \]
Note that $\# S\geq r(\ell-3)$. 
Therefore, Lemma~\ref{lemma:num_of_points_dist}, there exists an effectively computable constant $c$ such that the number of isomorphism classes of elliptic curves over $\F_p$ with the number of points in the set $S$ is greater or equal to  
\begin{equation}\label{eqn:lenstra_bound}
 		 c\cdot (r(\ell-3)-2)\cdot \frac{\sqrt{p}}{\log p} 
	\geq c\cdot ((\frac{\sqrt{p}}{\ell}-1)(\ell-3)-2)\cdot \frac{\sqrt{p}}{\log p}
	> 	 c\cdot (\frac{\sqrt{p}}{2\ell} \cdot \frac{\ell}{3} - 2) \cdot \frac{\sqrt{p}}{\log p} 
	\geq c\cdot \frac{p}{ 18 \log p} . 
\end{equation}
Since the total number of elliptic curves over $\F_p$ is $p^2 - p$; the number of elliptic curves isomorphic to a given elliptic curve $E$ is $\frac{(p-1)}{\# \Aut E}$ \cite[(1.4)]{lenstra1987factoring}. 
So for $\ell\geq 5$ or $\ell = 3$ and $\kron{4p}{3} = -1$, the ratio of elliptic curves over $\F_p$ with discriminant $D$ such that $\kron{D}{\ell} = 1$ is $\Omega(\frac{1}{\log p})$.

To finish the treatment of the case, where $\ell\geq 5$, or $\ell = 3$ and $\kron{4p}{3} = \kron{p}{3} = -1$, we will show that among such curves the proportion of the $j$-invariants $j(E)$ on the crater of the volcano having one or two neighbors is $o(\frac{1}{\log p})$. Recall that we are in the case $\ell\nmid D$ and $\ell = \mathfrak{l}_1\mathfrak{l_2}$ in $\Q(\sqrt{D})$, and that the crater has size equals to the order of $\mathfrak{l}_1$ (which is the same as the order of $\mathfrak{l}_2$) in $\CL(\OOO)$. 

If the crater has $1$ or $2$ vertices, $\mathfrak{l}_1$ must have order dividing $2$ in $\CL(\OOO)$. If $\mathfrak{l}_1$ has order $1$ in $\CL(\OOO)$ then we have $x^2-Dy^2=\ell$ for some $x,y\in \Z$. Since $\ell$ is prime we necessarily have $y\neq0$. Moreover, since $6\ell<\sqrt{p}$ we have $-D<\sqrt{p}$ and therefore $4p -\sqrt{p}<t^2$. On the other hand, by the Hasse bound we have $t^2\leq 4p$, hence $4p-\sqrt{p}<t^2\leq4p$. Therefore, there are at most $O(p^{\frac14})$ $j$-invariants for which the the top of the volcano consists of a single vertex.  This handles the case of $\mathfrak{l}_1$ having order $1$.\\
The remaining case of $\mathfrak{l}_1$ having order $2$, on the other hand, cannot happen because of genus theory. More precisely, let $D_0$ be the discriminant of $\Q(\sqrt{D})$.  Since $\mathfrak{l}_1$ has order $2$ in $\CL(D)$ and $\mathfrak{l}_1\nmid D$ it has order $2$ in the class group $\CL(D_0)$ of $\Q(\sqrt{D})$. Now, recall that, by genus theory, the $2$-torsion in $\CL(D_0)$ is generated by primes dividing $D_0$. Therefore, if $\mathfrak{l}_1$ has order $2$, then $\mathfrak{l}_1\mid D_0$, which gives a contradiction.

Therefore, in the case of $\kron{D}{\ell}=1$ the probability that a random $j(E)\in \F_p$  having $\leq 2$ neighbors on the crater of the volcano is $O(p^{-\frac34})=o(\frac{1}{\log p})$, which finishes the treatment of the case $\ell\geq 5$ or $\ell = 3$ and $\kron{4p}{3} = \kron{p}{3} = -1$.

For the remaining cases, where $\ell = 2$, or $\ell = 3$ and $\kron{p}{3} = 1$, we count the number of vertical isogenies. Following the formula for the depth of an isogeny volcano in Lemma~\ref{lemma:isograph}, for an elliptic curve $E(\F_p)$ of trace $t$ with $\ell^2\mid t^2 - 4p$, the curve lives on the part of the volcano of depth $\geq$ 1. In this case we only need to make sure that the curve does not live at the bottom of the volcano because otherwise it will necessarily have at least $\ell$-neighbors (if it is at the bottom it has only one neighbor). 



When $\ell = 2$, every $t=2t_1$ satisfies $4\mid t^2 - 4p$. So every $t\in [ - 2\sqrt{p}, 2\sqrt{p}]\cap 2\Z$ corresponds to trace of an elliptic curve over $\F_p$ with at least two neighbors.

When $\ell = 3$ and $\kron{p}{3} = 1$, Hensel's lemma implies that $\frac29$ of the $t\in [-2\sqrt{p},2\sqrt{p}]\cap\Z$  satisfy $t^2\equiv 4p \bmod 9$. These all correspond to traces of an elliptic curves over $\F_p$ with at least two neighbors.

Finally, in both cases using Lemma~\ref{lemma:num_of_points_dist} with a set $S$ that takes a constant fraction from $[ p+1-\sqrt{p}, p+1+\sqrt{p}]\cap\Z$, we see that the $O(\frac{1}{\log p})$ lower bound also applies for $\ell = 2$ or $\ell = 3$ and $\kron{p}{3} = 1$. 

\end{proof}

\fi

\begin{proof}[Proof of Theorem~\ref{thm:basicequalfactoring}]
Suppose that there is a probabilistic polynomial time algorithm $A$ that finds all the $\ell$-isogenous neighbors in Problem~\ref{def:problem:basic} with non-negligible probability $\eta$. We will build a probabilistic polynomial time algorithm $A'$ that solves factoring. Given an integer $N$, $A'$ samples two random integers $a, b\in\Z/N\Z$ such that $4a^3+27b^2\neq 0$, and computes $j = 1728\cdot\frac{4a^3}{4a^3+27b^2}$. With all but negligible probability $\gcd(j,N) = 1$ and $j\neq 0, 1728$; if $j$ happens to satisfy $1<\gcd(j,N) <N$, then $A'$ outputs $\gcd(j,N)$.

$A'$ then sends $N, j_0$ to the solver $A$ for Problem~\ref{def:problem:basic} for a fixed polynomially large prime $\ell$, gets back a set of solutions $\mathcal{J} = \set{j_i}_{i\in[k]}$, where $0\leq k\leq (\ell+1)^2$ denotes the number of solutions. 
With probability $\Omega(\frac{1}{\log^2 N}) $, the curve $E: y^2 = x^3 + a x + b$ has at least two $\ell$-isogenies over both $\F_p$ and $\F_q$ due to Theorem~\ref{thm:ratioD}. In that case there exists $j, j'\in \mathcal{J}$ such that $1 < \gcd(j - j', N) < N$, which gives a prime factor of $N$.
\end{proof}

\subsection{The $(\ell, m)$-isogenous neighbors problem over $\ZNZ$}\label{sec:neighbor_vol_lm}

\begin{definition}[The $(\ell, m)$-isogenous neighbors problem]\label{def:problem:joint}
Let $p$ and $q$ be two distinct primes. Let $N := p\cdot q$.  
Let $\ell$, $m$ be two polynomially large integers s.t. $\gcd(\ell m, N)=1$. 
The input of the $(\ell, m)$-isogenous neighbor problem is the $j$-invariants $j_1$, $j_2$ of two elliptic curves $E_1$, $E_2$ defined over $\Z/N\Z$. The problem asks to find all the integers $j'$ such that $\Phi_\ell( j(E_1), j' ) = 0$, and $\Phi_m( j(E_2), j' ) = 0$ over $\Z/N\Z$.
\end{definition}

When $\gcd(\ell, m)=1$, applying the Euclidean algorithm on $\Phi_{\ell}(j_1, x)$ and $\Phi_{m}(j_2, x)$ gives a linear polynomial over $x$.
\begin{lemma}[\cite{enge2010class}]\label{lemma:gcd_modularpoly}
Let $j_1, j_2 \in \Ell_\OOO(\F_p)$, and let $\ell, m \neq p$ be distinct primes with $4\ell^2 m^2 < |D|$. 
Then the degree of $f(x): = \gcd( \Phi_{\ell}(j_1, x), \Phi_{m}(j_2, x) )$ is less than or equal to $1$.
\end{lemma}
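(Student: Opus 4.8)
The plan is to argue that any common root of $\Phi_\ell(j_1, x)$ and $\Phi_m(j_2, x)$ over $\F_p$ corresponds to a $j$-invariant $j_3$ that is simultaneously $\ell$-isogenous to $j_1$ and $m$-isogenous to $j_2$, and then to show that there can be at most one such $j_3$ by a counting argument using the size of reduced binary quadratic forms of discriminant $D$. First I would recall from Section~\ref{sec:prelim:isogeny} that the roots of $\Phi_\ell(j_1, x)$ in $\Fpbar$ are exactly the $j$-invariants of curves $\ell$-isogenous to a curve with $j$-invariant $j_1$ via a cyclic $\ell$-isogeny; since $\ell$ is prime this is every $\ell$-isogeny. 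So a common root $j_3$ yields a cyclic $\ell$-isogeny $E_1 \to E_3$ and a cyclic $m$-isogeny $E_2 \to E_3$, hence (composing with the dual) a cyclic $\ell m$-isogeny $E_1 \to E_2$ (cyclic because $\gcd(\ell,m)=1$, so the kernel is cyclic of order $\ell m$).

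The key step is then: suppose $j_3$ and $j_3'$ are two distinct common roots. Each gives a cyclic $\ell m$-isogeny $E_1 \to E_2$, i.e. a cyclic subgroup of $E_1$ of order $\ell m$ whose quotient is (a twist of) $E_2$. Composing the first with the dual of the second produces a nonzero endomorphism $\alpha$ of $E_1$ whose degree divides $(\ell m)^2$, in fact has norm exactly dividing $\ell^2 m^2$, and which is not multiplication by an integer (since $j_3 \ne j_3'$ the two kernels differ, so $\alpha$ is not $[\ell m]$ up to sign). Thus $\alpha$ is an element of the imaginary quadratic order $\OOO = \Endo(E_1)$ of discriminant $D$ that is not in $\Z$, with $N(\alpha) \le \ell^2 m^2$. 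Writing $\alpha = a + b\omega$ with $b \ne 0$ in a standard basis, the norm form is a positive definite binary quadratic form of discriminant $D$, and $N(\alpha) \ge |b|^2 \cdot |D|/4 \ge |D|/4$ (the minimum of the form over vectors with $b\ne 0$; more carefully, any element of $\OOO$ outside $\Z$ has norm at least $|D|/4$, which one checks from $4N(\alpha) = (2a+b\operatorname{tr}\omega)^2 + |b|^2|D| \ge |D|$). This forces $|D|/4 \le N(\alpha) \le \ell^2 m^2$, i.e. $4\ell^2 m^2 \ge |D|$, contradicting the hypothesis $4\ell^2 m^2 < |D|$. Hence there is at most one common root, so $\deg f \le 1$.

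The main obstacle I anticipate is bookkeeping around twists and the precise norm bound: the curves here are only defined up to twist (we are working with $j$-invariants, not curves with fixed models), so one must be careful that the composed map $E_1 \to E_2 \to E_1$ is genuinely an endomorphism of $E_1$ — this is fine since an isogeny and the dual of another isogeny to the same target compose to an endomorphism regardless of twisting, because twists share the same endomorphism ring. The second delicate point is getting the clean constant: one wants exactly "$N(\alpha) \ge |D|/4$ for $\alpha \in \OOO \setminus \Z$", which follows because the norm form, completed as a square, is $4N(\alpha) = (\text{integer})^2 + b^2|D|$ with $b \ne 0$. Everything else — the correspondence between roots of $\Phi_\ell$ and $\ell$-isogenies, primality of $\ell,m$ ensuring cyclicity, and $\gcd(\ell,m)=1$ ensuring the composite has cyclic kernel of order $\ell m$ — is standard and already available from the preliminaries. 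I would also remark that the same argument applies over $\ZNZ$ componentwise (over $\F_p$ and $\F_q$ separately), which is how this lemma gets used in the construction, though the statement as given is over $\F_p$.
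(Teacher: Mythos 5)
The paper does not actually prove this lemma---it imports it from Enge--Sutherland \cite{enge2010class}---and your argument is essentially the standard proof from that source: two distinct common roots would give two distinct cyclic $\ell m$-isogenies $E_1\to E_2$, and composing one with the dual of the other yields a non-integral endomorphism of $E_1$ of norm $\ell^2m^2$, contradicting the fact that every element of $\OOO\setminus\Z$ has norm at least $|D|/4$, so the hypothesis $4\ell^2m^2<|D|$ rules this out. Your proof is correct; the one micro-step worth writing out is the parenthetical ``the kernels differ, so $\alpha$ is not $\pm[\ell m]$'': if $\hat\psi\circ\phi=\pm[\ell m]=\pm\hat\psi\circ\psi$, then since composition with the nonzero isogeny $\hat\psi$ is injective on $\Hom(E_1,E_2)$ one gets $\phi=\pm\psi$, hence equal kernels, hence equal $\ell$-parts of the kernels and $j_3=j_3'$, which is the contradiction you need.
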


When $\gcd(\ell, m) = d > 1$, applying the Euclidean algorithm on $\Phi_{\ell}(j_1, x)$ and $\Phi_{m}(j_2, x)$ gives a polynomial of degree at least $d$. We present a proof in the the case where $m = \ell^2$, which has the general idea. 

\begin{lemma}
Let $p\neq2,3$ and $\ell\neq p$ be primes, and let $j_0,j_1$ be such that $\Phi_{\ell}(j_0,j_1)=0 \bmod p$. Let $\Phi_{\ell}(X, j_0)$ and $\Phi_{\ell^2}(X,j_1)$ be the modular polynomials of levels $\ell$ and $\ell^2$ respectively. Then,
$$(X-j_1) \cdot \gcd(\Phi_{\ell}(X,j_0), \Phi_{\ell^2}(X,j_1)) = \Phi_{\ell}(X,j_0)$$
in $\F_p[X]$. In particular, 
$$\deg(\gcd(\Phi_{\ell}(X,j_0), \Phi_{\ell^2}(X,j_1))) = \ell$$
\end{lemma}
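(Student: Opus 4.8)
The plan is to work over $\Fpbar$ and translate the roots of the two modular polynomials into isogeny-theoretic data. I will fix curves $E_0,E_1$ over $\Fpbar$ with $j(E_i)=j_i$ and, since $\Phi_\ell(j_0,j_1)=0$ with $\ell$ prime, an $\ell$-isogeny $\phi\colon E_0\to E_1$ with dual $\hat\phi\colon E_1\to E_0$. The facts I will lean on are standard: for $j_0\notin\{0,1728\}$, $\Phi_\ell(X,j_0)$ is monic of degree $\ell+1$ with roots the $j$-invariants $j(E_0/C)$ over the $\ell+1$ order-$\ell$ subgroups $C\subset E_0$, and $\Phi_{\ell^2}(X,j_1)$ is monic of degree $\psi(\ell^2)=\ell^2+\ell$ with roots $j(E_1/G)$ over the cyclic order-$\ell^2$ subgroups $G\subset E_1$. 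I will also invoke the mild hypothesis $4\ell^2<|D|$ with $D=\mathrm{disc}(\Endo(E_0))$ — this is automatic in the paper's regime ($\ell$ polynomial, $|D|$ super-polynomial) and is the analogue of the hypothesis appearing in Lemma~\ref{lemma:gcd_modularpoly} — because it forces that neither $E_0$ nor $E_1$ has a cyclic $\ell^2$-endomorphism (such an endomorphism would generate an imaginary quadratic order of discriminant of absolute value $\le 4\ell^2<|D|$), and in particular that $\Phi_\ell(X,j_0)$ is separable.

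First I would peel off $j_1$: it is a root of $\Phi_\ell(X,j_0)$ by symmetry of $\Phi_\ell$, and a simple one by the no-cyclic-$\ell^2$-endomorphism property, so $\Phi_\ell(X,j_0)=(X-j_1)\,v(X)$ with $v$ monic of degree $\ell$ and $v(j_1)\neq 0$. The heart of the proof is then to show $v\mid\Phi_{\ell^2}(X,j_1)$. Each root $r$ of $v$ equals $j(E')$ for an $\ell$-isogeny $\psi\colon E_0\to E'$ with $\kernel(\psi)\neq\kernel(\phi)$ (because $r\neq j_1$), and I would argue that the composite $\psi\circ\hat\phi\colon E_1\to E'$, of degree $\ell^2$, is cyclic: its kernel contains $\kernel(\hat\phi)$ and equals the full $\ell$-torsion $E_1[\ell]$ if and only if $\hat\phi(E_1[\ell])\subseteq\kernel(\psi)$; but $\hat\phi(E_1[\ell])=\kernel(\phi)$ (a dual-isogeny identity following from $\phi\circ\hat\phi=[\ell]$), so by the choice of $\psi$ the kernel has order $\ell^2$ and is not $E_1[\ell]$, hence is cyclic. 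Therefore $\Phi_{\ell^2}(j_1,r)=0$ for each root $r$ of $v$; since $\Phi_\ell(X,j_0)$ is separable these $\ell$ roots are distinct, so $v\mid\Phi_{\ell^2}(X,j_1)$ and hence $v\mid g:=\gcd\!\big(\Phi_\ell(X,j_0),\Phi_{\ell^2}(X,j_1)\big)$.

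To finish I would observe $g\mid\Phi_\ell(X,j_0)=(X-j_1)\,v(X)$, which together with $v\mid g$ leaves only $g=v$ or $g=(X-j_1)\,v$. The latter would give $\Phi_\ell(X,j_0)\mid\Phi_{\ell^2}(X,j_1)$, hence $\Phi_{\ell^2}(j_1,j_1)=0$, i.e. a cyclic $\ell^2$-endomorphism of $E_1$ — excluded. So $g=v$, which is exactly $(X-j_1)\,g(X)=\Phi_\ell(X,j_0)$, and in particular $\deg g=\ell$.

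The step I expect to be the real obstacle is the cyclicity claim in the second paragraph: making precise the identity $\hat\phi(E_1[\ell])=\kernel(\phi)$ and the criterion for a composite of two $\ell$-isogenies to be cyclic (the rigorous form of the ``non-backtracking'' picture). A secondary subtlety is that the identity as literally stated needs $j_0,j_1\notin\{0,1728\}$ and the absence of cyclic $\ell^2$-endomorphisms: without a smallness assumption on $\ell$ relative to $|D|$, $\Phi_\ell(X,j_0)$ can be inseparable and $E_1$ can genuinely admit such an endomorphism (for instance when $E_1$ has complex multiplication by an order whose discriminant is small relative to $\ell^2$), and then $\deg g$ can exceed $\ell$ — so the clean statement really wants the hypothesis $4\ell^2<|D|$, which holds throughout this paper. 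If one prefers to sidestep the multiplicity bookkeeping, the same conclusion follows from the identity $\mathrm{Res}_Y\!\big(\Phi_\ell(Y,Z),\Phi_\ell(X,Y)\big)=(X-Z)^{\ell+1}\,\Phi_{\ell^2}(X,Z)$ in $\Z[X,Z]$ — provable over $\C$ by counting factorizations of $\ell^2$-isogenies and then reduced modulo $p$ — specialized at $Z=j_1$, using multiplicativity of the resultant together with $\Phi_\ell(j_0,j_1)=0$.
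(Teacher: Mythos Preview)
Your argument is correct and shares its core with the paper's: composing $\hat\phi\colon E_1\to E_0$ with any $\ell$-isogeny out of $E_0$ whose kernel differs from $\ker\phi$ yields a cyclic $\ell^2$-isogeny from $E_1$, so every root of $\Phi_\ell(X,j_0)/(X-j_1)$ is a root of $\Phi_{\ell^2}(X,j_1)$, while $j_1$ itself is not (no cyclic $\ell^2$-endomorphism of $E_1$). The packaging differs. The paper frames this as a double count of the $\ell^2+\ell$ cyclic $\ell^2$-isogenies from $E_1$: those factoring through $E_0$ via $\hat\varphi_\ell$ number at most $N_{\gcd}$, the rest at most $\ell^2$, giving $N_{\gcd}\ge\ell$; the reverse bound $N_{\gcd}\le\ell$ then comes from $\deg\big(\Phi_\ell(X,j_0)/(X-j_1)\big)=\ell$. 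You instead prove the divisibility $v\mid g\mid (X-j_1)v$ directly and rule out $g=(X-j_1)v$. Your route is a bit more explicit about the auxiliary hypotheses: you isolate $4\ell^2<|D|$ to guarantee separability of $\Phi_\ell(X,j_0)$ and the absence of cyclic $\ell^2$-endomorphisms, both of which the paper's counting uses implicitly without stating. One small caveat: as written your hypothesis bounds only $\mathrm{disc}(\Endo(E_0))$, whereas excluding $\Phi_{\ell^2}(j_1,j_1)=0$ needs the same for $E_1$; this is automatic in the paper's setting (both curves lie on the same crater), but in general one should impose it on both.
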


\begin{proof} Without loss of generality we can, and we do, assume that $\Phi_{\ell}(X,j_0)$, $\Phi_{\ell}(X,j_1)$, and $\Phi_{\ell^2}(X,j_1)$ split over $\F_p$ (otherwise we can base change to an extension $k'/\F_p$, where the full $\ell^2$-torsion is defined, this does not affect the degree of the $\gcd$). 

Assume that the degree of the $\gcd$ is $N_{\gcd}$. We have,
\begin{equation}\label{mod_deg}
\deg(\Phi_\ell(X,j_0))=\ell+1,\hspace{0.3in}\deg(\Phi_{\ell^2}(X,j_1))=\ell(\ell+1).
\end{equation}
Let $E_0,E_1$ denote the (isomorphism classes of) elliptic curves with $j$-invariants $j_0$ and $j_1$ respectively, and $\varphi_{\ell}:E_0\rightarrow E_1$ be the corresponding isogeny. We count the number $N_{\ell^2}$ of cyclic $\ell^2$-isogenies from $E_1$ two ways. First, $N_{\ell^2}$ is the number of roots of $\Phi_{\ell^2}(X,j_1)$, which, by \eqref{mod_deg} and the assumption that $\ell^2+\ell<p$, is $\ell^2+\ell$. 

Next, recall (cf. Corollary 6.11 of \cite{DrewLecture6}) that every isogeny of degree $\ell^2$ can be decomposed as a composition of two degree $\ell$ isogenies (which are necessarily cyclic). Using this $N_{\ell^2}$ is bounded above by $N_{\gcd}+\ell^2$, where the first factor counts the number of $\ell^2$-isogenies $E_1\rightarrow E$ that are compositions $E_1\xrightarrow{\hat{\varphi}_{\ell}} E_0 \rightarrow E$, and the second factor counts the isogenies that are compositions $E_1\rightarrow E'\rightarrow E$, where $E'\ncong E_1$. Note that we are not counting compositions $E_1\xrightarrow{\phi}\tilde{E}\xrightarrow{\hat{\phi}}E_1$ since these do not give rise to cyclic isogenies.

This shows that $\ell^2+\ell\leq \ell^2+N_{\ell^2}\Rightarrow N_{\gcd}\geq \ell$. On the other hand, by \eqref{mod_deg} $N_{\gcd}\leq \ell$ since $\Phi_{\ell}(X,j_0)/(X-j_0)$ has degree $\ell$ and each root except for $j_1$ gives a (possibly cyclic) $\ell^2$-isogeny by composition with $\hat{\varphi}_{\ell}$. This implies that $N_{\gcd}=\ell$ and that all the $\ell^2$-isogenies obtained this way are cyclic. In particular, we get that the $\gcd$ is $\Phi_{\ell}(X,j_0)/(X-j_1)$.
\end{proof}

\paragraph{Discussions.}
Let us remark that we do not know if solving the $(\ell, \ell^2)$-isogenous neighbors problem is as hard as factoring. To adapt the same reduction in the proof of Theorem~\ref{thm:basicequalfactoring}, we need the feasibility of sampling two integers $j_1$, $j_2$ such that $\Phi_{\ell}(j_1, j_2) = 0 \pmod N$, and $j_1$ or $j_2$ has to have another isogenous neighbor over $\F_p$ or $\F_q$. However the feasibility is unclear to us in general. 

From the cryptanalytic point of view, a significant difference of the $(\ell, \ell^2)$-isogenous neighbors problem and the $\ell$-isogenous neighbors problem is the following.  
Let $\ell$ be an odd prime. 
Recall that an isogeny $\phi: E_1 \to E_2$ of degree $\ell$ can be represented by a rational polynomial 
\[ \phi: E_1 \to E_2, ~~~ (x, y) \mapsto \left( \frac{f(x)}{h(x)^2}, \frac{g(x,y)}{h(x)^3} \right), \]
where $h(x)$ is its \emph{kernel polynomial} of degree $\frac{\ell-1}{2}$. The roots of $h(x)$ are the $x$-coordinates of the kernel subgroup $G\subset E_1[\ell]$ such that $\phi: E_1 \to E_1/G$. 


Given a single $j$-invariant $j'$ over $\ZNZ$, it is infeasible to find a rational polynomial $\phi$ of degree $\ell$ that maps from a curve $E$ with $j$-invariant $j'$ to another curve $j''$, since otherwise $j''$ is a solution to the $\ell$-isogenous neighbors problem.
However, if we are given two $j$-invariants $j_1, j_2\in\Z/N\Z$ such that $\Phi_\ell(j_1, j_2) = 0 \pmod N$, as in the $(\ell, \ell^2)$-isogenous neighbors problem; then it is feasible to compute a pair of curves $E_1$, $E_2$ such that $j(E_1) = j_1$, $j(E_2) = j_2$, together with an explicit rational polynomial of an $\ell$-isogeny from $E_1$ to $E_2$. This is because the arithmetic operations involved in computing the kernel polynomial $h(x)$ mentioned in \cite{couveignes1994schoof,schoof1995counting,elkies1998elliptic} works over $\ZNZ$ by reduction mod $N$, and does not require the factorization of $N$.

\begin{proposition}\label{claim:kernelpoly}
Given $\ell, N\in\Z$ such that $\gcd(\ell, N)=1$, and two integers $j_1, j_2\in\ZNZ$ such that $\Phi_{\ell}(j_1, j_2) = 0$ over $\ZNZ$, the elliptic curves $E_1$, $E_2$, and the kernel polynomial $h(x)$ of an isogeny $\phi$ from $E_1$, $E_2$ can be computed in time polynomial in $\ell, \log(N)$. From the kernel polynomial $h(x)$ of an isogeny $\phi$, computing $f(x)$, $g(x,y)$, hence the entire rational polynomial of $\phi$, is feasible over $\ZNZ$ via V\'{e}lu's formulae \cite{velu1971}.
\end{proposition}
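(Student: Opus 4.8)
The plan is to observe that the classical Elkies algorithm for passing from the modular polynomial to an explicit isogeny, followed by V\'elu's formulae, is \emph{rational}: each step is a sequence of additions, multiplications, and divisions in the base ring, where every divisor is either an integer of size $O(\ell)$ or one of a short, input-dependent list of quantities ($j_1$, $j_2$, $j_1-1728$, $j_2-1728$, $\partial_Y\Phi_\ell(j_1,j_2)$, and the Weierstrass coefficients of the curves produced along the way). Over a field such divisions are harmless. Over $\ZNZ$ they can be carried out exactly when the divisors are units, and since $\ZNZ\simeq\F_p\times\F_q$ by $\CRT$ and the Elkies--V\'elu pipeline is deterministic (once the Weierstrass models are fixed it branches on no ring element, as the root $j_2$ of $\Phi_\ell(j_1,\cdot)$ is part of the input), a run over $\ZNZ$ in which no division fails produces, modulo $p$ and modulo $q$, exactly the output of the corresponding run over $\F_p$ and $\F_q$. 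Correctness then reduces to the classical correctness of these algorithms over finite fields (see \cite{elkies1998elliptic,schoof1995counting,sutherland2013isogeny}, together with the fast power-series recursions in the isogeny-computation literature).

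Concretely I would proceed as follows (taking $\ell$ the odd prime of interest; $\ell=2$ is similar with $\deg h=1$). (i) Run trial division on $N$ by all primes $\le\ell$ in time $\poly(\ell,\log N)$; if a factor is found, split $\ZNZ$ (recall $N=pq$) and solve over each factor field, so henceforth every integer $\le\ell$ is a unit. (ii) Fix the normalized model $E_1:y^2=x^3+3kx+2k$ with $k=j_1/(1728-j_1)$, using that $\gcd(j_1(j_1-1728),N)$ is either $1$ or a nontrivial factor of $N$. (iii) From $j_1,j_2$ and the first and second partial derivatives of $\Phi_\ell$ at $(j_1,j_2)$, apply the standard rational formulas to produce a model $E_2:y^2=x^3+\tilde a x+\tilde b$ for which the $\ell$-isogeny $E_1\to E_2$ is normalized; the denominators occurring here divide a product of $j_1,j_2,j_1-1728,j_2-1728,\partial_Y\Phi_\ell(j_1,j_2)$ and integers $\le\ell$. (iv) From the coefficients of $E_1$, $E_2$ and $\ell$, compute the first $\tfrac{\ell-1}{2}$ power sums needed to determine the kernel polynomial via the classical recursion, and recover $h(x)$ by Newton's identities (dividing by $1,\dots,\tfrac{\ell-1}{2}$, all units by (i)). Every $\gcd$ with $N$ encountered in (i)--(iv) returns either a unit, in which case we continue, or a nontrivial factor of $N$, in which case we are done by $\CRT$; the only remaining possibility is that one such quantity vanishes modulo both $p$ and $q$, which for the inputs arising in our construction (horizontal $\ell$-isogenies in the crater of a volcano, so $j_2$ a simple root of $\Phi_\ell(j_1,\cdot)$ and $j_1\neq j_2$) does not happen. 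The resulting $h(x)\in(\ZNZ)[x]$ reduces modulo $p$ and modulo $q$ to kernel polynomials of $\ell$-isogenies there, hence is one over $\ZNZ$.

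For the final assertion I would invoke V\'elu's formulae \cite{velu1971} in the ``kernel-polynomial'' formulation (Kohel's thesis; see also \cite{sutherland2013isogeny}), which takes $E_1$ and $h$ and returns $E_2=E_1/\langle h\rangle$ together with the isogeny $(x,y)\mapsto\bigl(f(x)/h(x)^2,\ y\,g(x)/h(x)^3\bigr)$: the coefficients of $f$, $g$ and of $E_2$ are symmetric functions of the roots of $h$, hence polynomials in the coefficients of $h$ and in $a_1,b_1$ with only small-integer denominators, so the whole rational map is computable in $\ZNZ$ in time $\poly(\ell,\log N)$. I expect the main obstacle, and essentially the only point that needs more than a routine translation of field arithmetic to $\ZNZ$, to be the bookkeeping in steps (iii)--(iv): one must check that the exceptional loci of the Elkies formulas are exactly the handful of $\gcd$'s listed, and argue that landing on one of them either factors $N$ (which only helps) or is excluded for the inputs of interest.
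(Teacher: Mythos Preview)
Your proposal is correct and follows the same approach the paper takes. The paper does not give a formal proof of this proposition; its entire justification is the sentence immediately preceding the statement, namely that ``the arithmetic operations involved in computing the kernel polynomial $h(x)$ mentioned in \cite{couveignes1994schoof,schoof1995counting,elkies1998elliptic} works over $\ZNZ$ by reduction mod $N$, and does not require the factorization of $N$.'' Your write-up is a careful unpacking of exactly this observation---the Elkies pipeline is rational, the divisors that appear are either small integers or a short list of input-dependent quantities, and any failed inversion yields a factor of $N$---so you are supplying the details the paper leaves implicit rather than taking a different route.
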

However, it is unclear how to utilize the rational polynomial to solve the $(\ell, \ell^2)$-joint neighbors problem. We postpone further discussions on the hardness and cryptanalysis to Section~\ref{sec:cryptanalysis}.

%
%

\section{Trapdoor group with infeasible inversion}\label{sec:construction_GII}

In this section we present the construction of the trapdoor group with infeasible inversion. 
As the general construction is somewhat technical we will present it in two steps: first we will go over the basic algorithms that feature a simple encoding and composition rule, which suffices for the instantiations of the applications; we will then move to the general algorithms that offer potential optimization and flexibility.
\ifnum\eprint=0
Due to the page limitation, we leave the general setting to the full version in the supplementary material.
\else
\fi

\subsection{Definitions}

Let us first provide the definition of a TGII, adapted from the original definition in \cite{hohenberger2003cryptographic,molnar2003homomorphic} to match our construction. 
The main differences are:
\begin{enumerate}
\item The trapdoor in the definition of \cite{hohenberger2003cryptographic,molnar2003homomorphic} is only used to invert an encoded group element, whereas we assume the trapdoor can be use to encode and decode (which implies the ability of inverting).
\item We classify the encodings of the group elements as \emph{canonical encodings} and \emph{composable encodings}, whereas the definition from \cite{hohenberger2003cryptographic,molnar2003homomorphic} does not. 
In our definition, the canonical encoding of an element is uniquely determined once the public parameter is fixed. It can be directly used in the equivalence test, but it does not support efficient group operations. Composable encodings of group elements support efficient group operations. A composable encoding, moreover, can be converted into a canonical encoding by an efficient, public extraction algorithm.
\end{enumerate}

\begin{definition}
Let $\G = ( \circ, 1_\G )$ be a finite multiplicative group where $\circ$ denotes the group operator, and $1_\G$ denotes the identity. 
For $x\in\G$, denote its inverse by $x^{-1}$. $\G$ is associated with the following efficient algorithms:
\begin{description}
\item[Parameter generation. ] $\Gen(1^\secp)$ takes as input the security parameter $1^\secp$, outputs the public parameter $\PP$ and the trapdoor $\tau$.

\item[Private sampling. ] $\Trap\Sam(\PP, \tau, x)$ takes as inputs the public parameter $\PP$, the trapdoor $\tau$, and a plaintext group element $x\in\G$, outputs a composable encoding $\enc(x)$.

\item[Composition. ] $\comp(\PP, \enc(x), \enc(y) )$ takes as inputs the public parameter $\PP$, two composable encodings $\enc(x), \enc(y)$, outputs $\enc(x\circ y)$. We often use the notation $\enc(x)\circ\enc(y)$ for $\comp(\PP, \enc(x), \enc(y) )$.

\item[Extraction. ] $\convert(\PP, \enc(x))$ takes as inputs the public parameter $\PP$, a composable encoding $\enc(x)$ of $x$, outputs the canonical encoding of $x$ as $\canenc(x)$.



\end{description}

The hardness of inversion requires that it is infeasible for any efficient algorithm to produce the canonical encoding of $x^{-1}$ given a composable encoding of $x\in\G$.
\begin{description}
\item[Hardness of inversion. ] For any p.p.t. algorithm $A$,
\[ \Pr [  z = \enc^*(x^{-1}) \mid z\la A(\PP, \enc(x)) ] < \negl(\secp),  \]
where the probability is taken over the randomness in the generation of $\PP$, $x$, $\enc(x)$, and the adversary $A$.
\end{description}
\end{definition}


\ifnum\eprint=1
\subsection{Construction - 1: Basic setting}\label{sec:TGIIfromisogeny}
\else
\subsection{Construction details: basic}\label{sec:TGIIfromisogeny}
\fi

In this section we provide the formal construction of the TGII with the basic setting of algorithms. 
The basic setting assumes that in the application of TGII, the encoding sampling algorithm can be stateful, and it is easy to determine which encodings have to be pairwise composable, and which are not. 
Under these assumptions, we show that we can always sample composable encodings so that the composition always succeeds. 
That is, the degrees of the any two encodings are chosen to be coprime if they will be composed in the application, and not coprime if they will not be composed. The reader may be wondering why we are distinguishing pairs that are composable and those that are not, as opposed to simply assuming that every pairs of encoding are composable. The reason is for security, meanly due to the parallelogram attack in \S\ref{sec:parallelogram_attack}.

The basic setting suffices for instantiating the directed transitive signature \cite{hohenberger2003cryptographic,molnar2003homomorphic} and the broadcast encryption schemes \cite{ILOP04}, where the master signer and the master  encrypter are stateful. 
We will explain how to determine which encodings are pairwise composable in these two applications, so as to determine the prime degrees of the encodings (the rest of the parameters are not application-specific and follow the universal solution from this section).

For convenience of the reader and for further reference, we provide in Figure~\ref{fig:parameter} a summary of the parameters, with the basic constraints they should satisfy, and whether they are public or hidden. The correctness and efficiency reasons behind these constraints will be detailed in the coming paragraphs, whereas the security reasons will be explained in \S\ref{sec:cryptanalysis}.

\paragraph{Parameter generation. }
The parameter generation algorithm $\Gen(1^\secp)$ takes the security parameter $1^\secp$ as input, first chooses a non-maximal order $\OOO$ of an imaginary quadratic field as follows:
\begin{enumerate}
	\item Select a square-free negative integer $D_0\equiv 1 \bmod 4$ as the fundamental discriminant, such that $D_0$ is polynomially large and $h(D_0)$ is a prime.	
	\item Choose $k = O(\log(\secp))$, and a set of distinct polynomially large prime numbers $\set{ f_i }_{i\in[k]}$ such that the odd-part of $\left(f_i - \kron{D_0}{f_i}\right) $ is square-free and not divisible by $h(D_0)$. Let $f = \prod_{i\in[k]} f_i$. 
	\item Set $D = f^2 D_0$. Recall from Eqn.~\eqref{eqn:classnumbernonmaximal} that
		\begin{equation}\label{eqn:hD}
			h(D) = 2\cdot \frac{h(D_0)}{w(D_0)}\prod_{i\in [k]}\left( f_i - \kron{D_0}{f_i} \right)
		\end{equation}
	\end{enumerate}
Let $\CL(\OOO)_{\text{odd}}$ be the odd part of $\CL(\OOO)$, $h(D)_{\text{odd}}$ be largest odd factor of $h(D)$. Note that due to the choices of $D_0$ and $\set{f_i}$, $\CL(\OOO)_{\text{odd}}$ is cyclic, and we have $|D|, h(D)_{\text{odd}} \in \secp^{O(\log\secp)}$. The group with infeasible inversion $\G$ is then $\CL(\OOO)_{\text{odd}}$ with group order $h(D)_{\text{odd}}$.

We then sample the public parameters as follows:
\begin{enumerate}
	\item Choose two primes $p$, $q$, and elliptic curves $E_{0,\F_p}$, $E_{0,\F_q}$ with discriminant $D$, using the CM method (cf. \cite{lay1994constructing} and more). 
	\item Check whether $p$ and $q$ are safe RSA primes (if not, then back to the previous step and restart). 
	Also, check whether the number of points $\#(E_0(\F_p))$, $\#(E_0(\F_q))$, $\#(\tilde{E}_0(\F_p))$, $\#(\tilde{E}_0(\F_q))$ (where $\tilde{E}$ denotes the quadratic twist of $E$) are polynomially smooth (if yes, then back to the previous step and restart). $p$, $q$ and the number of points should be hidden for security.
	\item Set the modulus $N$ as $N := p\cdot q$ and let $j_0=\CRT(p,q;j(E_{0,\F_p}),j(E_{0,\F_q}))$. Let $j_0$ represent the identity of $\G$.  
\end{enumerate}
Output $(N, j_0)$ as the public parameter $\PP$.
Keep $(D, p, q)$ as the trapdoor $\tau$ ($D$ and the group order of $\G$ should be hidden for security).

\begin{figure}[t]
\begin{center}
\begin{tabular}{lll}\hline
Parameters 			      & Basic constraints   			                        & Public?  	\\ \hline
The modulus $N$           & $N = pq$, $p, q$ are primes, $|p|, |q|\in\poly(\secp)$  &  Yes      \\
The identity $j(E_0)$     & $\Endo(E_0(\F_p))\simeq \Endo(E_0(\F_q)) \simeq \OOO$   &  Yes      \\
$\#(E_0(\F_p))$, $\#(E_0(\F_q))$  & not polynomially smooth  	                    &  No    \\
The discriminant $D$ of $\OOO$    & $D = D_0\cdot f^2$, $D\approx \secp^{O(\log\secp)}$, $D$ is polynomially smooth & No \\
The class number $h(D)$   & follows the choice of $D$                               & No \\
A set $S$ in an encoding: & $S=\set{ C_{i} = [(p_{i}, b_{i}, \cdot)] }_{i\in[ w ]}$ generates $\CL(D)_{\text{odd}}$ & See below    \\
~~The number $w$ of ideals  & $w\in O(\log\secp)$  	                                & Yes   \\
~~The degree $p_i$ of isogenies & $p_i\in\poly(\secp)$  	                        & Yes   \\
~~The basis $\mat{B}$ of $\Lattice_S$ & $\| \tilde{\mat{B}} \|\in\poly(\secp)$  	& No  
\end{tabular}
\caption{Summary of the choices of parameters in the basic setting.  }\label{fig:parameter}
\end{center}
\end{figure}

\paragraph{The sampling algorithm and the group operation of the composable encodings.}
Next we provide the definitions and the algorithms for the composable encoding. 

\begin{definition}[Composable encoding]\label{def:represent_enc}
Given a factorization of $x$ as $\prod_{i=1}^{w} C_i^{e_i}$, where $w\in O(\log\secp)$; $C_i = [(p_i, b_i, \cdot)]\in\G$, $e_i\in\N$, for $i\in[w]$. 
A composable encoding of $x \in\G$ is represented by 
\[ \enc(x) = (L; T_{1}, ..., T_{w}) = ((p_{1}, ..., p_{w}); (j_{1,1}, ..., j_{1,e_{1}}), ..., (j_{w,1}, ..., j_{w,e_{w}}) ), \]
where all the primes in the list $L = (p_{1}, ..., p_{w})$ are distinct; for each $i\in[w]$, $T_i\in(\ZNZ)^{e_i}$ is a list of the $j$-invariants such that $j_{ i, k} = C_i^k * j_0$, for $k\in[e_i]$. 

The degree of an encoding $\enc(x)$ is defined to be $d(\enc(x)) := \prod_{i=1}^{w} p_{i}^{e_{i}} $.
\end{definition}

Notice that the factorization of $x = \prod_{i=1}^{w} C_i^{e_i}$ has to satisfy $e_i\in\poly(\secp)$, for all $i\in[w]$, so as to ensure the length of $\enc(x)$ is polynomial.
Looking ahead, we also require each $p_i$, the degree of the isogeny that represents the $C_{i}$-action, to be polynomially large so as to ensure Algorithm~\ref{alg:classgroupaction} in the encoding sampling algorithm and Algorithm~\ref{alg:gcdop} in the extraction algorithm run in polynomial time.

The composable encoding sampling algorithm requires the following subroutine:
\begin{algorithm}\label{alg:classgroupaction}
$\act(\tau, j, C)$ takes as input the trapdoor $\tau = (D, p, q)$, a $j$-invariant $j\in\ZNZ$, and an ideal class $C\in\CL(\OOO)$, proceeds as follows:
\begin{enumerate}
	\item Let $j_p = j \mod p$, $j_q = j \mod q$. 
    \item Compute $j'_p := C * j_p  \in \F_p$, $j'_q := C * j_q \in \F_q$. 
    \item Output $j': = \CRT(p, q; j'_p, j'_q)$.
\end{enumerate}
\end{algorithm}

\begin{algorithm}[Sample a composable encoding]\label{alg:represent_enc}
Given as input the public parameter $\PP = (N, j_0)$, the trapdoor $\tau = (D, p, q)$, and $x\in\G$, 
$\Trap\Sam(\PP, \tau, x)$ produces a composable encoding of $x$ is sampled as follows:
\begin{enumerate}
\item Choose $w\in O(\log\secp)$ and a generation set $ S = \set{ C_{i} = [(p_{i}, b_{i}, \cdot)] }_{i\in[ w ]}\subset \G$.
\item Sample a short basis $\mat{B}$ (in the sense that $\|\tilde{\mat{B}}\|\in\poly(\secp)$) for the relation lattice $\Lattice_S$: 
\begin{equation}\label{eqn:relationLattice}
	\Lattice_S: = \set{ \ary{y} \mid \ary{y}\in\Z^{w}, \prod_{i\in[w]} C_i^{y_i} = 1_\G  }. 
\end{equation}  
\item Given $x$, $S$, $\mat{B}$, sample a short vector $\ary{e}\in\set{\poly(\secp)\cap\N}^{w}$ such that $x = \prod_{i\in [w]} C_{i}^{e_{i}}$. 
\item For all $i\in[w]$: 
	\begin{enumerate}
	\item Let $j_{i,0}:=j_0$.
	\item For $k = 1$ to $e_{i}$: compute $j_{i,k} := \act( \tau, j_{i,k-1}, C_{i} )$.
	\item Let $T_{i} := (j_{i,1}, ..., j_{i,e_{i}})$.
	\end{enumerate}
\item Let $L\in\N^{w}$ be a list where the $i^{th}$ entry of $L$ is $p_{i}$.
\item Output the composable encoding of $x$ as
\[ \enc(x) = (L; T_{1}, ..., T_{w}) = ((p_{1}, ..., p_{w}); (j_{1,1}, ..., j_{1,e_{1}}), ..., (j_{w,1}, ..., j_{w,e_{w}}) ).\]
\end{enumerate}
\end{algorithm}

\begin{remark}[Thinking of each adjacent pair of $j$-invariants as an isogeny]\label{remark:isogenyoverZNZ}
In each $T_i$, each adjacent pair of the $j$-invariants can be thought of representing an isogeny $\phi$ that corresponds to the ideal class $C_{i} = [(p_{i}, b_{i}, \cdot)]$. Over the finite field, $C_i$ can be explicitly recovered from an adjacent pair of the $j$-invariants and $p_i$ (cf. Remark~\ref{remark:idealandisogeny}). 
Over $\ZNZ$, the rational polynomial of the isogeny $\phi$ can be recovered from the adjacent pair of the $j$-invariants and $p_i$ (cf. Proposition~\ref{claim:kernelpoly}), but it is not clear how to recover $b_i$ in the binary quadratic form representation of $C_{i}$. 
\end{remark}

\begin{remark}[The only stateful step in the sampling algorithm]
Recall that the basic setting assumes the encoding algorithm is stateful, where the state records the prime factors of the degrees used in the existing composable encodings. The state is only used in the first step to choose the $\set{p_i}$ of the ideals in the generation set $ S = \set{ C_{i} = [(p_{i}, b_{i}, \cdot)] }_{i\in[ w ]}$.
\end{remark}

\paragraph{Group operations.} 
Given two composable encodings, the group operation is done by simply concatenating the encodings if their degrees are coprime, or otherwise outputting ``failure".

\begin{algorithm}\label{alg:compose_ct}
The encoding composition algorithm $\comp(\PP, \enc(x), \enc(y))$ parses 
$\enc(x) = (L_x; T_{x,1}, ..., T_{x,w_x})$, $\enc(y) = (L_y; T_{y,1}, ..., T_{y,w_y})$, 
produces the composable encoding of $ x\circ y$ as follows:
\begin{itemize}
	\item If $\gcd(d(\enc(x)), d(\enc(y)))=1$, then output the composable encoding of $ x\circ y$ as 
	\[ \enc( x\circ y ) = ( L_x || L_y; T_{x,1}, ..., T_{x,w_x}, T_{y,1}, ..., T_{y,w_y} ).\] 
	\item If $\gcd(d(\enc(x)), d(\enc(y)))>1$, output ``failure". 
\end{itemize}
\end{algorithm}

\paragraph{The canonical encoding and the extraction algorithm.}

\begin{definition}[Canonical encoding]\label{def:represent_enc_un}
The canonical encoding of $x\in\G$ is $x * j_0 \in \ZNZ$. 
\end{definition}

The canonical encoding of $x$ can be computed by first obtaining a composable encoding of $x$, and then converting the composable encoding into the canonical encoding using the extraction algorithm. The extraction algorithm requires the following subroutine.

\begin{algorithm}[The ``gcd" operation]\label{alg:gcdop}
The algorithm $\gcdop(\PP, \ell_1, \ell_2; j_1, j_2)$ takes as input the public parameter $\PP$, 
two degrees $\ell_1, \ell_2$ and two $j$-invariants $j_1, j_2$, proceeds as follows:
	\begin{itemize}
	\item If $\gcd(\ell_1, \ell_2) = 1$, then it computes the linear function $f(x) = \gcd( \Phi_{\ell_2}(j_1, x), \Phi_{\ell_1}(j_2, x) )$ over $\ZNZ$, and outputs the only root of $f(x)$;
	\item If $\gcd(\ell_1, \ell_2) > 1$, it outputs $\bot$. 
	\end{itemize}
\end{algorithm}

\begin{algorithm}\label{alg:convert}
$\convert(\PP, \enc(x))$ converts the composable encoding $\enc(x)$ into the canonical encoding $\canenc(x)$. 
The algorithm maintains a pair of lists $(U, V)$, 
where $U$ stores a list of $j$-invariants $(j_1, ..., j_{|U|})$, 
$V$ stores a list of degrees where the $i^{th}$ entry of $V$ is the degree of isogeny between $j_{i}$ and $j_{i-1}$ (when $i=1$, $j_{i-1}$ is the $j_0$ in the public parameter). 
The lengths of $U$ and $V$ are always equal during the execution of the algorithm.  

The algorithm parses $\enc(x) = (L; T_{1}, ..., T_{w})$, proceeds as follows:
\begin{enumerate}
\item Initialization: Let $U := T_{1}$, $V := ( L_{1}, ..., L_{1} )$ of length $|T_{ 1}|$ (i.e. copy $L_{1}$ for $|T_{1}|$ times ).  
\item For $i = 2$ to $w$:
	\begin{enumerate}
	\item Set $u_\temp:= | U |$.
	\item For $k = 1$ to $|T_{i}|$:
		\begin{enumerate}
		\item Let $t_{i,k,0}$ be the $k^{th}$ $j$-invariant in $T_{i}$, i.e. $j_{i,k}$;
		\item For $h = 1$ to $u_\temp$: 
			\begin{itemize}
			\item If $k=1$, compute $t_{i,k,h}: = \gcdop(\PP, L_{i}, V_h; t_{i, k,h-1}, U_h)$;
			\item If $k>1$, compute $t_{i,k,h}: = \gcdop(\PP, L_{i}, V_h; t_{i, k,h-1}, t_{i, k-1,h})$;
			\end{itemize}
		\item Append $t_{i, k,u_\temp}$ to the list $U$, append $L_{i}$ to the list $V$. 
		\end{enumerate}
	\end{enumerate}
\item Return the last entry of $U$.
\end{enumerate}
\end{algorithm}
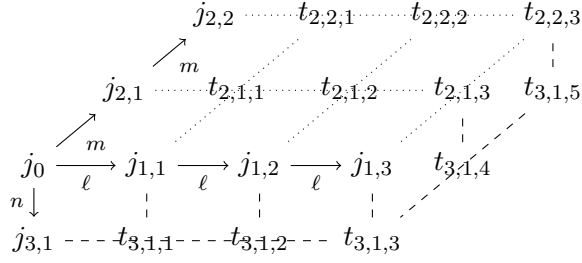
\begin{figure}
  \centering
    \begin{tikzpicture}
      \begin{scope}[xshift=13cm, yshift = 0.7cm]
        \draw node at (0   ,0) (O) {$j_0$};
        \draw node at (1.5 ,0) (l1) {$j_{1,1}$};
        \draw node at (3   ,0) (l2) {$j_{1,2}$};
        \draw node at (4.5 ,0) (l3) {$j_{1,3}$};
        \draw node at (1.2 ,1) (m1) {$j_{2,1}$};
        \draw node at (2.4 ,2) (m2) {$j_{2,2}$};
        \draw node at (0  ,-1) (n1) {$j_{3,1}$};

        \draw node at (1.2+1.5 ,1) (t211) {$t_{2,1,1}$};
        \draw node at (1.2+3.  ,1) (t212) {$t_{2,1,2}$};
        \draw node at (1.2+4.5 ,1) (t213) {$t_{2,1,3}$};
        \draw node at (2.4+1.5 ,2) (t221) {$t_{2,2,1}$};
        \draw node at (2.4+3.  ,2) (t222) {$t_{2,2,2}$};
        \draw node at (2.4+4.5 ,2) (t223) {$t_{2,2,3}$};

        \draw node at (1.5 ,-1) (t311) {$t_{3,1,1}$};
        \draw node at (3   ,-1) (t312) {$t_{3,1,2}$};
        \draw node at (4.5 ,-1) (t313) {$t_{3,1,3}$};
        \draw node at (1.2+4.5 ,0) (t314) {$t_{3,1,4}$};
        \draw node at (2.4+4.5 ,1) (t315) {$t_{3,1,5}$};

        \draw [->] (O) -- node[auto,swap]  {\scriptsize$\ell$} (l1);
        \draw [->] (l1) -- node[auto,swap] {\scriptsize$\ell$} (l2);
        \draw [->] (l2) -- node[auto,swap] {\scriptsize$\ell$} (l3);
        \draw [->] (O) -- node[auto,swap]  {\scriptsize$m$} (m1);
        \draw [->] (m1) -- node[auto,swap] {\scriptsize$m$} (m2);
        \draw [->] (O) -- node[auto,swap] {\scriptsize$n$} (n1);
        \draw (m1) edge[dotted] (t213);         
        \draw (m2) edge[dotted] (t223);        
        \draw (l1) edge[dotted] (t221);        
        \draw (l2) edge[dotted] (t222);        
        \draw (l3) edge[dotted] (t223);        
        \draw (t315) edge[dashed] (t223); 
        \draw (t314) edge[dashed] (t213); 
        \draw (t313) edge[dashed] (l3); 
        \draw (t312) edge[dashed] (l2); 
        \draw (t311) edge[dashed] (l1);    
        \draw (t315) edge[dashed] (t313);  
        \draw (n1) edge[dashed] (t313);     
      \end{scope}
    \end{tikzpicture}
    \caption{ An example for the composable encoding and the extraction algorithm.  }\label{fig:composition}
\end{figure}

\begin{example}
Let us give a simple example for the composition and the extraction algorithms.
Let $\ell, m, n$ be three distinct polynomially large primes.
Let the composable encoding of an element $y$ be $\enc(y) = ((\ell); (j_{1,1}, j_{1,2}, j_{1,3}))$, based on the factorization of $y = C_1^{e_1} = [(\ell, b_\ell, \cdot)]^3$.
Let the composable encoding of an element $z$ be $\enc(z) = ((m,n); (j_{2,1}, j_{2,2}), (j_{3,1}) )$, based on the factorization of $z = C_2^{e_2}\cdot C_3^{e_3} = [(m, b_m, \cdot)]^2 \cdot [(n, b_n, \cdot)]^1$. 
Then the composable encoding of $x = y\circ z$ obtained from Algorithm~\ref{alg:compose_ct} is  
$\enc(x) = ((\ell, m, n); (j_{1,1}, j_{1,2}, j_{1,3}), (j_{2,1}, j_{2,2}), (j_{3,1}))$. 

Next we explain how to extract the canonical encoding of $x$ from $\enc(x)$.
In Figure~\ref{fig:composition}, the $j$-invariants in $\enc(x)$ are placed on the solid arrows (their positions do not follow the relative positions on the volcano). We can think of each gcd operation in Algorithm~\ref{alg:gcdop} as fulfilling a missing vertex of a parallelogram defined by three existing vertices.

When running $\convert(\PP, \enc(x))$, the list $U$ is initialized as $(j_{1,1}, j_{1,2}, j_{1,3})$, the list $V$ is initialized as $(\ell, \ell, \ell)$. 
Let us go through the algorithm for $i = 2$ and $i = 3$ in the second step. 
\begin{itemize}
\item When $i = 2$, $u_\temp$ equals to $|U| = 3$. The intermediate $j$-invariants $\set{t_{2, k, h}}_{k\in[|T_{2}|], h\in[u_\temp]}$ are placed on the dotted lines, computed in the order of $t_{2, 1, 1}$, $t_{2, 1, 2}$, $t_{2, 1, 3}$, $t_{2, 2, 1}$, $t_{2, 2, 2}$, $t_{2, 2, 3}$. The list $U$ is updated to $(j_{1,1}, j_{1,2}, j_{1,3}, t_{2, 1, 3}, t_{2, 2, 3})$, the list $V$ is updated to $(\ell, \ell, \ell, m, m)$
\item When $i = 3$, $u_\temp$ equals to $|U| = 5$. The intermediate $j$-invariants $\set{t_{3, 1, h}}_{ h\in[u_\temp]}$ are placed on the dashed lines, computed in the order of $t_{3, 1, 1}$, ..., $t_{3, 1, 5}$. In the end, $t_{3, 1, 5}$ is appended to $U$, $n$ is appended to $V$.
\end{itemize}
The canonical encoding of $x$ is then $t_{3, 1, 5}$.
\end{example}

\paragraph{On correctness and efficiency.}
We now verify the correctness and efficiency of the parameter generation, encoding sampling, composition, and the extraction algorithms.

To begin with, we verify that the canonical encoding correctly and uniquely determines the group element in $\CL(\OOO)$. It follows from the choices of the elliptic curves $E_0(\F_p)$ and $E_0(\F_q)$ with 
$\Endo(E_0(\F_p)) \simeq \Endo( E_0(\F_q) )\simeq \OOO$, and the following bijection once we fix $E_0$: 
\[    \CL(\OOO) \to \Ell_\OOO(\K), ~~  x \mapsto x * j(E_0(\K)), \text{   for } \K\in\set{\F_p, \F_q}\]

Next, we will show that generating the parameters, i.e. the curves $E_{0,\F_p}$, $E_{0,\F_q}$ with a given fundamental discriminant $D_0$ and a conductor $f = \prod_{i}^{k}f_i$, is efficient when $|D_0|$ and all the factors of $f$ are of polynomial size. Let $u$ be an integer such that $f\mid u$. Choose a $p$ and $t_p$ such that $t_p^2 - 4p = u^2D_0$. Then, compute the Hilbert class polynomial $H_{D_0}$ over $\F_p$ and find one of its roots $j$. From $j$, descending on the volcanoes $G_{f_i}(\F_p)$ for every $f_i$ gives the $j$-invariant for the curve with desired discriminant. The same construction works verbatim for $q$.

We then show that sampling the composable encodings can be done in heuristic polynomial time: 
\begin{enumerate}
	\item Given a logarithmically large generation set $S = \set{ C_i = [(\ell_i, b_i, \cdot)]\in\CL(\OOO) }_{i\in[w]}$, a possibly big basis of the relation lattice $\Lattice_S$ can be obtained by solving the discrete-log problem over $\CL(\OOO)$, which can be done in polynomial time since the group order is polynomially smooth. 
	\item Suppose that the lattice $\Lattice_S$ satisfies the Gaussian heuristic (this is the only heuristic we assume).  That is, for all $1\leq i\leq w$, the $i^{th}$ successive minimum of $\Lattice_S$, denoted as $\lambda_i$, satisfies $\lambda_i \approx \sqrt{w}\cdot h(\OOO)^{1/w} \in\poly(\secp) $. 
	Since $w = O(\log(\secp))$, the short basis $\mat{B}$ of $\Lattice_S$, produced by the LLL algorithm, satisfies $\| \mat{B} \|\leq 2^\frac{w}{2}\cdot \lambda_w \in \poly(\secp)$. 
	\item Given a target group element $x\in\CL(\OOO)$, the polynomially short basis $\mat{B}$, we can sample a vector $\ary{e}\in\N^w$ such that $\prod_{i = 1}^m C_i^{e_i} = x$ and $\|\ary{e}\|_1\in\poly(\secp)$ in polynomial time using e.g. Babai's algorithm \cite{DBLP:journals/combinatorica/Babai86}. (In \S\ref{sec:attack:basis}, we will explain that the GPV sampler \cite{DBLP:conf/stoc/GentryPV08} is preferred for the security purpose.)
	\item The unit operation $\act(\tau, j, C)$ is efficient when the ideal class $C$ corresponds to a polynomial degree isogeny, since it is efficient to compute polynomial degree isogenies over the finite fields.
	\item The length of the final output $\enc(x)$ is $(w + \|\ary{e}\|_1)\cdot \poly(\secp)\in\poly(\secp)$.
\end{enumerate}

The algorithm $\comp(\PP, \enc(x), \enc(y) )$ simply concatenates $\enc(x)$, $\enc(y)$, so it is efficient as long as $\enc(x)$, $\enc(y)$ are of polynomial size. 

The correctness of the unit operation $\gcdop$ follows the commutativity of the endomorphism ring $\OOO$. The operation $\gcdop(\PP, \ell_1, \ell_2; j_1, j_2)$ is efficient when $\gcd(\ell_1,\ell_2)=1$, $\ell_1, \ell_2\in\poly(\secp)$, given that solving the $(\ell_1,\ell_2)$ isogenous neighbor problem over $\ZNZ$ is efficient under these conditions.

When applying $\convert()$ (Algorithm~\ref{alg:convert}) on a composable encoding $\enc(x) = (L_x; T_{x,1}, ..., T_{x,w_x})$, it runs $\gcdop$ for $\max_{i = 1}^{w_x}|T_{x_i}|\cdot\left(\sum_{i = 1}^{w_x}|T_{x_i}|\right)$ times. So obtaining the canonical encoding is efficient as long as all the primes in $L_x$ are polynomially large, and $|T_{x,i}|\in\poly(\secp)$ for all $i\in[w_x]$.

\ifnum\eprint=1
\begin{remark}[The parameters in practice]
Let us mention the two extreme sides on the deviations of the parameters between theory and practice that we expect. 
The discriminant $D = D_0\cdot f^2$ is bounded by $\secp^{O(\log\secp)}$ asymptotically, which leads to a $\secp^{O(\log\secp)}$-time attack by first guessing $D$, then solving the discrete-log problem over $\CL(D)$. 
The bottleneck of the bound of $D$ is at the dimension $w$ of the lattice $\Lattice_S$, which is set to be $O(\log\secp)$ so that the lattice reduction algorithm runs in polynomial time. In practice the lattice reduction algorithms are known to outperform the asymptotic bound. So we expect the discriminant $D$ can be set larger than a direct estimation from $\secp^{O(\log\secp)}$.

On the other hand, the modular polynomials take a solid amount of space to store in practice, so we expect the gcd operation in Algorithm~\ref{alg:gcdop} to be slow for large polynomial degrees of $\ell_1$ and $\ell_2$. 
\end{remark}
\else
\fi

\subsection{Construction - 2: General case}\label{sec:TGIIconstruction_general}

We describe the generalizations of the basic algorithms that offer potential optimization and flexibility.

\paragraph{Composing non-coprime degree encodings using ``ladders''.} 
In the basic setting of the composable encodings, the group operation is feasible only for encodings with relatively prime degrees. To support the compositions of non-relatively prime degree encodings, say the shared prime degree is $\ell$, we can include in the public parameter the encodings of $\set{x^i}_{i\in[k]}$, where $x = [(\ell, b, \cdot)]$, $k$ is a polynomial. This then supports the composition of two encodings whose sum of exponents on the degree $\ell$ is $\leq k$. 

Sampling ladders has the benefit of supporting bounded number of shared-degree compositions. In some settings of the applications (e.g. in the broadcast encryption where the number of users is bounded a priori), ladders enable stateless encoding sampling algorithms. 

\begin{algorithm}[Sampling a ladder]\label{alg:ladder}
Given a polynomially large prime $\ell$, a polynomial $k$, and an element $x = [(\ell, b, \cdot)]\in\CL(D)$, sample a ladder for degree $\ell$ of length $k$ as follows:
\begin{enumerate}
	\item For $i = 1$ to $k$: compute $j_{i} := \act( \tau, j_{i-1}, x )$. (Recall $j_0$ is the identity from the public parameter.)
	\item Let $\lad(\ell) := ( j_1, ..., j_k )$. Include $\lad(\ell)$ in the public parameter.
\end{enumerate}
\end{algorithm}

For the convenience of the description of the group operation with ladders, let $\iota(\ell,L)$ take as input an integer $\ell$, a list $L$, and output the index of $\ell$ in $L$. 

\begin{algorithm}[Composition with ladders]
The algorithm $\comp(\PP, \enc(x), \enc(y))$ parses $\enc(x) = (L_x; T_{x,1}, ..., T_{x,w_x})$, $\enc(y) = (L_y; T_{y,1}, ..., T_{y,w_y})$, produces the composable encoding of $z = x\circ y$ as follows:
\begin{enumerate} 
\item Let $L_z = L_x\cup L_y$. 
\item For all $\ell \in L_x \setminus L_{x}\cap L_{y}$, let $T_{z,\iota(\ell,L_z)} = T_{x,\iota(\ell,L_x)}$; for all $\ell \in L_y \setminus L_{x}\cap L_{y}$, let $T_{z,\iota(\ell,L_z)} = T_{y,\iota(\ell,L_y)}$. 
\item For all $\ell\in L_{x}\cap L_{y}$:
	\begin{itemize}
	\item If $|\lad(\ell)|\geq |T_{x,\iota(\ell,L_x)}|+|T_{y,\iota(\ell,L_y)}|$, then let $T_{z,\iota(\ell,L_z)}$ be the list of the first $|T_{x,\iota(\ell,L_x)}|+|T_{y,\iota(\ell,L_y)}|$ elements in $\lad(\ell)$. 
	\item If $|\lad(\ell)| < |T_{x,\iota(\ell,L_x)}|+|T_{y,\iota(\ell,L_y)}|$, then the composition is infeasible. Return ``failure". 
	\end{itemize}
\item Output the composable encoding of $z$ as $\enc(z) = ( L_z; T_{z,1}, ..., T_{z,|L_z|} )$.
\end{enumerate} 
\end{algorithm}

\paragraph{Sample the composable encoding of a random element.}  
In the applications we are often required to sample (using the trapdoor) the composable encoding of a random element in $\G$ under the generation set $S = \set{ C_i := [(\ell_i, b_i, \cdot)] }_{i\in[w]}$ and. 
Let us call such an algorithm $\Random\Sam(\tau, S)$. 

An obvious instantiation of $\Random\Sam(\tau, S)$ is to simply choose a random element $x$ from $\G$ first, then run $\Trap\Sam(\PP, \tau, x)$ in Algorithm~\ref{alg:represent_enc}.

Alternatively, we can pick a random exponent vector $\ary{e}\in [-B,B]^w$, and let $x = \prod_{i\in[w]}{C_i}^{e_i}$. For proper choices of $B$, $w$, and the set of ideals, $x$ is with high min-entropy heuristically, but figuring out the exact distribution of $x$ is difficult in general. If $B$, $w$, and the set of ideals are chosen according to Lemma~\ref{lemma:expander}, then under GRH, $x$ is statistically close to uniform over $\CL(\OOO)$.

\paragraph{Compressing the composable encoding using the partial extraction algorithm.} 
In the basic setting, the composition of composable encodings keeps growing. 
In some application, it is tempting to compress the composable encodings as much as possible. 

We provide a partial extraction algorithm, which takes two composable encodings $\enc(x), \enc(y)$ and the public parameter as inputs. It outputs a $j$-invariant $j_x$ as the canonical encoding of $x$, and a sequences of isogenies $\phi_y = (\phi_1, ..., \phi_n)$ such that $\phi_n\circ ...\circ\phi_1(j_x) = j_{x\circ y}$ (we abuse the notation by using the $j$-invariant to represent an elliptic curve with such a $j$-invariant), where $j_{x\circ y}$ is the canonical encoding of ${x\circ y}$, $n$ is the number of the $j$-invariants in $\enc(y)$. 
In other words, $\phi_y$ represents the ideal class $C_y\in\G$ defined by $\enc(y)$. 
The one-side partial extraction algorithm is useful in the situation where the encoding $\enc(x)$ is relatively long and needs to be compressed, the encoding $\enc(y)$ is relatively short and doesn't have to be compressed. 

Now we present the partial extraction algorithm. The algorithm first runs the basic extraction algorithm $\convert(\PP, \cdot)$ on $\enc(x)$ (cf. Algorithm~\ref{alg:convert}), then produces $\phi_y$ using $\enc(y)$ and the intermediate information from $\convert(\PP, \enc(x))$.
\begin{algorithm}\label{alg:partialconvert}
$\Partial.\convert(\PP, \enc(x), \enc(y))$ maintains a pair of lists $(U, V)$, 
where $U$ stores a list of $j$-invariants $(j_1, ..., j_{|U|})$, 
$V$ stores a list of degrees where the $i^{th}$ entry of $V$ is the degree of isogeny between $j_{i}$ and $j_{i-1}$ (when $i=1$, $j_{i-1}$ is the $j_0$ in the public parameter). 
The lengths of $U$ and $V$ are always equal during the execution of the algorithm.  

The algorithm parses $\enc(x) = (L_x; T_{x,1}, ..., T_{x,w_x})$, $\enc(y) = (L_y; T_{y,1}, ..., T_{y,w_y})$, proceeds as follows:
\begin{enumerate}
\item Initialization: Let $U := T_{x,1}$, $V := ( L_{x,1}, ..., L_{x,1} )$ of length $|T_{x, 1}|$.  
\item For $i = 2$ to $w_x$:
	\begin{enumerate}
	\item Set $u_\temp:= | U |$.
	\item For $k = 1$ to $|T_{x,i}|$:
		\begin{enumerate}
		\item Let $t_{x,i,k,0}$ be the $k^{th}$ $j$-invariant in $T_{x,i}$, i.e. $j_{x,i,k}$;
		\item For $h = 1$ to $u_\temp$: 
			\begin{itemize}
			\item If $k=1$, compute $t_{x,i,k,h}: = \gcdop(\PP, L_{x,i}, V_h; t_{x,i,k,h-1}, U_h)$;
			\item If $k>1$, compute $t_{x,i,k,h}: = \gcdop(\PP, L_{x,i}, V_h; t_{x,i,k,h-1}, t_{x,i, k-1,h})$;
			\end{itemize}
		\item Append $t_{x,i, k,u_\temp}$ to the list $U$, append $L_{x,i}$ to the list $V$. 
		\end{enumerate}
	\end{enumerate}
\item Let $j_x$ be the last entry of $U$.
\item Initialize a counter $z := 1$.
\item For $i = 1$ to $w_y$:
	\begin{enumerate}
	\item Set $u_\temp:= | U |$.
	\item For $k = 1$ to $|T_{y,i}|$:
		\begin{enumerate}
		\item Let $t_{y,i,k,0}$ be the $k^{th}$ $j$-invariant in $T_{y,i}$, i.e. $j_{y,i,k}$;
		\item For $h = 1$ to $u_\temp$: 
			\begin{itemize}
			\item If $k=1$, compute $t_{y,i,k,h}: = \gcdop(\PP, L_{y,i}, V_h; t_{y,i, k,h-1}, U_h)$;
			\item If $k>1$, compute $t_{y,i,k,h}: = \gcdop(\PP, L_{y,i}, V_h; t_{y,i, k,h-1}, t_{y,i, k-1,h})$;
			\end{itemize}
		\item Let $j_\temp$ be the last entry in $U$. 
		\item $(*)$ Produce an isogeny $\phi_z$ of degree $L_{y,i}$ such that $\phi_z(j_\temp) = t_{y,i, k,u_\temp}$.
		\item Increase the counter $z := z+1$.
		\item Append $t_{y,i, k,u_\temp}$ to the list $U$, append $L_{y,i}$ to the list $V$. 
		\end{enumerate}
	\end{enumerate}
\item Let $\phi_y := (\phi_1, ..., \phi_{n})$, where $n = z-1$.
\item Output $j_x$ and $\phi_y$. 
\end{enumerate}
\end{algorithm}
Note that the operation $(*)$ can be performed efficiently due to Proposition~\ref{claim:kernelpoly}.

The partial extraction algorithm is used in the directed transitive signature scheme to compress a composed signature. In fact, in the DTS scheme, we need to efficiently verify that $\phi_y$ does represent the ideal class $C_y\in\G$ defined by $\enc(y)$. We don't know how to verify that solely from $\enc(y)$ and the rational polynomial of $\phi_y$ over $\ZNZ$. (As mentioned in Remark~\ref{remark:isogenyoverZNZ}, it is not clear how to recover the explicit ideal class $C$ given an isogeny $\phi$ over $\ZNZ$. Of course we can recover the norm of the ideal from the degree of $\phi$, but normally there are two ideals of the same prime norm.) 
Instead we provide a succinct proof on the correctness of the execution of Algorithm~\ref{alg:partialconvert}.

Let us recall the definition of a succinct non-interactive argument (SNARG).
\begin{definition}
Let $\CCC = \set{ C_\secp: \zo^{g(\secp)}\times \zo^{h(\secp)}\to \zo }_{\secp\in\N}$ be a family of Boolean circuits. A SNARG for the instance-witness relation defined by $\CCC$ is a tuple of efficient algorithms $(\Gen, \Prove, \Verify)$ defined as:
\begin{itemize}
	\item $\Gen(1^\secp)$ take the security parameter $\secp$, outputs a common reference string $\CRS$.
	\item $\Prove(\CRS,x,w)$ takes the $\CRS$, the instance $x\in \zo^{g(\secp)}$, and a witness $w\in\zo^{h(\secp)}$, outputs a proof $\pi$.
	\item $\Verify(\CRS,x,\pi)$ takes as input an instance $x$, and a proof $\pi$, outputs 1 if it accepts the proof, and 0 otherwise.
\end{itemize}
It satisfies the following properties:
\begin{itemize}
	\item Completeness: For all $x\in\zo^{g(\secp)}, w\in\zo^{h(\secp)}$ such that $C(x,w)=1$:
	\[ \Pr[ \Verify(\CRS,x,\Prove(\CRS,x,w)) = 1   ]=1   . \]
	\item Soundness: For all $x\in\zo^{g(\secp)}$ such that $C(x,w)=0$ for all $w\in\zo^{h(\secp)}$, for all polynomially bounded cheating prover $P^*$:
	\[ \Pr[ \Verify(\CRS,x, P^*(\CRS,x)) = 1   ]\leq \negl(\secp)   . \]
	\item Succinctness: There exists a universal polynomial $Q$ (independent of $\CCC$) such that $\Gen$ runs in time $Q(\secp + \log |\CCC_\secp|)$, the length of the proof output by $\Prove$ is bounded by $Q(\secp + \log |\CCC_\secp|)$, and $\Verify$ runs in time $Q(\secp + g(\secp) + \log |\CCC_\secp|)$.
\end{itemize}
\end{definition}
A construction of SNARG in the random oracle model is given by \cite{DBLP:journals/siamcomp/Micali00}.

With a SNARG in hand, we can add in Algorithm~\ref{alg:partialconvert} a proof $\pi$ for the instance $(\enc(y), \phi_y)$ and statement ``there exists an encoding $\enc(x)$ such that $\phi_y$ is computed from running a circuit that instantiates Algorithm~\ref{alg:partialconvert} on inputs $\enc(x), \enc(y)$''. 
Here the public parameter $\PP$ is hardcoded in the circuit. The encoding $\enc(x)$ is the witness in the relation. 
The soundness of SNARG guarantees that $\phi_y$ is an isogeny that corresponds to an $\OOO$-ideal represented by $\enc(y)$. The succinctness of SNARG guarantees that the length of the proof $\pi$ is $\poly\log(|\enc(x)|)$ and the time to verify the proof is $\poly(|\enc(y)|, \log(|\enc(x)|))$.

\paragraph{Alternative choices for the class group. }
Ideally we would like to efficiently sample an imaginary quadratic order $\OOO$ of large discriminant $D$, together with the class number $h(D)$, a generation set $\set{ C_i }$ and a short basis $\mat{B}$ for $\Lattice_\OOO$, and two large primes $p$, $q$ as well as curves $E_{0,\F_p}, E_{0,\F_q}$ such that $\Endo(E_{0,\F_p})\simeq\Endo(E_{0,\F_q})\simeq \OOO$. 
In Section~\ref{sec:cryptanalysis} we will explain that if $|D|$ is polynomial then computing the group inversion takes polynomial time, so we are forced to choose a super-polynomially large $|D|$. 
On the other hand, there is no polynomial time solution for the task of choosing a square-free discriminant $D$ with a super-polynomially large $h(D)$ (see, for instance, \cite{DBLP:conf/asiacrypt/HamdyM00}).


In the basic setting of the parameter, we have described a solution where the resulting $D$ is not square-free, of size $\approx\secp^{O(\log\secp)}$, and polynomially smooth. 
Here we provide the background if one would like to work with a square-free discriminant $D$. In this case, $\OOO$, the ring of integers of $\Q(\sqrt{D})$, is the maximal order of an imaginary quadratic field $K$. 

According to the Cohen-Lenstra heuristics \cite{cohenheuristics}, about $97.7575\%$ of the imaginary quadratic fields $K$ have the odd part of $\CL(\OOO_K)$ cyclic. If we choose $D$ such that $|D|\equiv 3\bmod 4$ and is a prime, then $h(D)$ is odd (by genus theory). So we might as well assume that $\CL(\OOO_K)$ is cyclic with odd order. 

For a fixed discriminant $D$, heuristically about half of the primes $\ell$ satisfies $\kron{D}{\ell} = 1$, so there are polynomially many ideals of polynomially large norm that can be used in the generation set.

However, it is not clear how to efficiently choose $p$, $q$ and curves $E_{0,\F_p}$, $E_{0,\F_q}$ such that the endomorphism rings of $E_{0,\F_p}$ and $E_{0,\F_q}$ have the given discriminant $D$ of super-polynomial size. 
The classical CM method (cf. \cite{lay1994constructing} and more) requires computing the Hilbert class polynomial $H_D$, whose cost grows proportional to $|D|$. 
Let us remark that the CM method might be an overkill, since we do not need to specify the number of points $\#(E_{0,\F_p}(\F_p))$ and $\#(E_{0,\F_q}(\F_q))$. However, we do not know any other better methods.

\section{Cryptanalysis}\label{sec:cryptanalysis}

In this section we will discuss our cryptanalysis attempts, and the countermeasures.

Central to the security of our cryptosystem is the conjectured hardness of solving various problems over $\ZNZ$ without knowing the factors of $N$. So we start from the feasibility of performing several individual computational tasks over $\ZNZ$; then focus on the $(\ell, \ell^2)$-isogenous neighbor problem over $\ZNZ$, whose hardness is necessary for the security of our candidate TGII; finally address all the other attacks in the TGII construction.




\subsection{The (in)feasibility of performing computations over $\ZNZ$}

The task of finding roots of polynomials of degree $d \geq2$ over $\ZNZ$ sits in the subroutines of many potential algorithms we need to consider, so let us begin with a clarification on the status of this problem. Currently, no polynomial time algorithm is known for solving this problem in general. The hardness of a few special instances have been extensively studied. They fall into the following three categories:

\begin{enumerate}
\item For certain families of polynomials, it is known that finding a root of them over $\ZNZ$ is as hard as factorizing $N$. For example, the family of polynomials $\set{f_a(x)=x^2-a}_{a\in(\ZNZ)^{\times}}$
whose potential roots are the solutions for the quadratic residue problem \cite{rabin1979digitalized}. 
\item There are families of polynomials, where finding at least one root is feasible. For example, if a root of a polynomial over $\ZNZ$ is known to be the same as the root over Q, then we can use LLL \cite{lenstra1982factoring}; or if a root is known to be smaller than roughly $O(N^{1/d})$, then Coppersmith-type algorithms can be used to find such a root \cite{DBLP:journals/joc/Coppersmith97}. But these families of polynomials only form a negligible portion of all the polynomials with polynomially bounded degrees.
\item The majority of the polynomials seem to live in the third case, where finding a root is conjectured to be hard, however the hardness is not known to be based on integer factorization. Among them, some specific families are even conjectured to be unlikely to have a reduction from integer factorization. For example, when $\gcd(3,\phi(N)) = 1$, the family of polynomials $\set{f_a(x)=x^3-a}_{a\in (\ZNZ)^{\times}}$ is conjectured to be hard to solve, and unlikely to be as hard as integer factorization \cite{boneh1998breaking}. 
\end{enumerate}

\subsubsection{Feasible information from a single $j$-invariant }
Let $N = pq$, as before, where $p$ and $q$ are large primes. From any $j \in \ZNZ, j\neq 0,1728$, we can easily find the coefficients $a$ and $b$ of the Weierstrass form of an elliptic curve $E(\ZNZ)$ with $j(E) = j$ by computing $a=3j(1728-j), b = 2j(1728-j)^2$. However, this method does not guarantee that the curve belongs to a specific isomorphism class (there are four of them). By choosing a value $u \in (\ZNZ)^{\times}$ and let $a^* = u^4  a, b^* = u^6  b$ one gets the coefficient of another curve with the same $j$-invariant, each belonging to one of the four isomorphism classes.


On the other hand, choosing a curve over $\ZNZ$ with a given $j$-invariant together with a point on the curve seems tricky. Note that there are methods of sampling a curve with a known $\ell$-torsion point over $\ZNZ$ (cf. \cite{kubert1976universal,sutherland2012constructing}). But with the additional constraint on the $j$-invariant, these methods seem to require solving a non-linear polynomial over $\ZNZ$.

Nevertheless, it is always feasible to choose a curve together with the $x$-coordinate of a point on it, since a random $x \in \ZNZ$ is the $x$-coordinate of some point on the curve with probability roughly $\frac12$. It is also known that computing the multiples of a point $P$ over $E(\ZNZ)$ is feasible solely using the $x$-coordinate of $P$ (cf. \cite{DBLP:conf/eurocrypt/Demytko93}). The implication of this is that we should at the very least not give out the group orders of the curves involved in the scheme. More precisely, we should avoid the $j$-invariants corresponding to curves (or their twists) with polynomially smooth cardinalities over either $\F_p$ or $\F_q$. Otherwise Lenstra's algorithm \cite{lenstra1987factoring} can be used to factorize $N$. 

In our application we also assume that the endomorphism rings of $E(\F_p)$ and $E(\F_q)$ are isomorphic and not given out to begin with. Computing the discriminant of $\OOO \simeq \Endo(E(\F_p)) \simeq \Endo(E(\F_q))$ or the number of points of $E$ over $\Z/N\Z$ seems to be hard given only $N$ and a $j$-invariant. In fact Kunihiro and Koyama (and others) have reduced factorizing $N$ to computing the number of points of general elliptic curves over $\Z/N\Z$ \cite{DBLP:conf/eurocrypt/KunihiroK98}. However, these reductions are not efficient in the special case, where the endomorphism rings of $E(\F_p)$ and $E(\F_q)$ are required to be isomorphic. So, the result of \cite{DBLP:conf/eurocrypt/KunihiroK98} can be viewed as evidence that the polynomial time algorithms for counting points on elliptic curves over finite fields may fail over $\Z/N\Z$ without making use of the fact that the endomorphism rings of $E(\F_p)$ and $E(\F_q)$ are isomorphic.



\subsubsection{Computing explicit isogenies over $\ZNZ$ given more than one $j$-invariant}\label{sec:explicit_iso_znz}

Let $\ell$ be a prime. We will be concerned with degree $\ell$ isogenies. If we are only given a single $j$-invariant $j_1 \in\Z/N\Z$, then finding an integer $j_2$ such that $\Phi_\ell(j_1, j_2) = 0 \pmod N$ seems hard. We remark that Theorem~\ref{thm:basicequalfactoring} does not guarantee that finding $j_2$ is as hard as factoring when the endomorphism rings of $E(\F_p)$ and $E(\F_q)$ are isomorphic. However, as of now, we do not know how to make use of the condition that the endomorphism rings are isomorphic to mount an attack on the problem.

Of course, in the construction of a TGII, we are not only given a single $j$-invariant, but many $j$-invariants with each neighboring pair of them satisfying the $\ell^{th}$ modular polynomial, a polynomial degree $\ell + 1$. We will study what other information can be extracted from these neighboring $j$-invariants.

In Proposition~\ref{claim:kernelpoly}, we have explained that given two integers $j_1, j_2\in\ZNZ$ such that $\Phi_{\ell}(j_1, j_2) = 0$ over $\ZNZ$, the elliptic curves $E_1$, $E_2$, and the kernel polynomial $h(x)$ of an isogeny $\phi$ from $E_1$, $E_2$ can be computed in time polynomial in $\ell, \log(N)$. However, it is not clear how to use the explicit expression of $\phi$ to break factoring or solve the inversion problem. 

A natural next step is to recover a point in the kernel of $\phi$, but it is also not clear how to recover even the $x$-coordinate of a point in the kernel when $\ell \geq 5$. For $\ell=3$, on the other hand, the kernel polynomial does reveal the $x$-coordinate of a point $P$ in the kernel $G\subset E_1[3]$ (note that $h(\cdot)$ is of degree $1$ in this particular case). But revealing the $x$-coordinate of a point $P\in E_1[3]$ does not immediately break factoring, since $3P$ is $O$ over both $\F_p$ and $\F_q$. At this moment we do not know of a full attack from a point in $\ker(\phi)$. Nevertheless, we still choose to take an additional safeguard by avoiding the use of 3-isogenies since it reveals the $x$-coordinate of a point in $E_1[3]$, and many operations on elliptic curves are feasible given the $x$-coordinate of a point.

\subsection{Tackling the $(\ell, \ell^2)$-isogenous neighbor problem}\label{sec:analysis_ell_ell2}

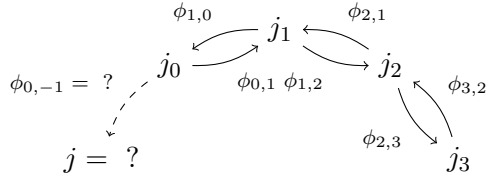
\begin{figure}
  \centering
    \begin{tikzpicture}
      \def\angleshift{72}
      \def\n{3}
      \foreach \i in {0,...,\n} {
        \pgfmathparse{360/(\n+7)*(3/2-\i) + \angleshift}
        \let\angle\pgfmathresult
        \draw (\angle:2.5) node (E\i) {$j_\i$};
      }
        \pgfmathparse{360/(\n+7)*(3/2+1) + \angleshift}
        \let\angle\pgfmathresult
        \draw (\angle:2.5) node (E-1) {$j = ~ ?$};
      \foreach \i in {0,...,2} {
        \pgfmathparse{int(mod(\i+1, \n+1))}
        \let\j\pgfmathresult
        \draw (E\i) edge[->,bend right=18] node[auto,swap] {\scriptsize$\phi_{\i,\j}$} (E\j);
      }
      \foreach \i in {0,...,2} {
        \pgfmathparse{int(mod(\i+1, \n+1))}
        \let\j\pgfmathresult
        \draw (E\j) edge[->,bend right=18] node[auto,swap] {\scriptsize$\phi_{\j,\i}$} (E\i);
      }
        \draw (E0) edge[dashed,->,bend right=18] node[auto,swap] {\scriptsize$\phi_{0,-1} = ~?$} (E-1);
    \end{tikzpicture}
   \caption{ A pictorial description of the $(\ell, \ell^2)$-isogenous neighbor problem. }
  \label{fig:volcano_subgroup}
\end{figure}

The hardness of infeasible inversion of our group representation in fact relies on the hardness of the following generalization of the $(\ell, \ell^2)$-isogenous neighbor problem (cf. Definition~\ref{def:problem:joint}). Let the modulus $N=p q$, where the large prime factors $p$ and $q$ are hidden. Let $\ell$ be the degree of the isogenies. For a polynomially large $k\in\N$ and a given a sequence of integers $j_0$, $j_1$, $j_2$, ... $j_k$ such that $\Phi_{\ell}( j_{i-1}, j_i ) = 0 \pmod N$ for all $i\in[k]$, the problem asks to find an integer $j_{-1}$ such that $\Phi_{\ell^{i+1}}( j_{-1}, j_i ) = 0 \pmod N$ for all $i\in[k]$. In addition, for all $i\in\set{0, 1, ..., k}$ the endomorphism rings of $E_{i,\F_p}$, $E_{i,\F_q}$ are isomorphic to an imaginary quadratic order $\OOO$ ($\OOO$ is supposed to be hidden for reasons to be explained later). See Figure~\ref{fig:volcano_subgroup} for the pictorial description of the problem.

From the discussion in \S~\ref{sec:explicit_iso_znz} we know that it is feasible to compute the kernel polynomials of the isogenies $\phi_{i, i+1}$ and their duals $\phi_{i+1, i}$, for $i = 0, ..., k-1$. Denote these kernel polynomials by $h_{i, i+1}$ and $h_{i+1, i}$ respectively. If one can compute the kernel polynomial $h_{0, -1}$ of the isogeny $\phi_{0, -1}$ that maps from $j_0$ to $j$ one can then solve the $(\ell,\ell^2)$-problem. Since $h_{0, 1}$ can be recovered, there is a chance of obtaining $h_{0, -1}$ from $h_{0, 1}$. As mentioned in Remark~\ref{remark:idealandisogeny}, over a finite field, the two kernel polynomials $h_{0, -1}$ and $h_{0, 1}$ of the two horizontal isogenies can be explicitly related via the Frobenius endomorphism. However it is not clear how to use the relation over $\Z/N\Z$. 

Another attempt is to see if $\phi_{1,0}(\kernel(\phi_{1,2}))$ gives $\kernel(\phi_{0,-1})$ (the map $\phi_{1,0}(\kernel(\phi_{1,2}))$ can be computed using Newton's identities). However, applying $\phi_{1,0}$ on the kernel subgroup $G_{1,2}$ of $\phi_{1,2}$ gives $\kernel(\phi_{0,1})$, since $\phi_{0,1}\circ \phi_{1,0}(G_{1,2}) = [\ell]G_{1,2} = O$, so it does not give the desired kernel subgroup.

\subsubsection{Hilbert class polynomial attack}\label{sec:attack_hilbert}

Let $D$ be the discriminant of the imaginary quadratic order $\OOO$ that we are working with. If computing the Hilbert class polynomial $H_D$ is feasible, then we can solve the $(\ell, \ell^2)$-isogenous neighbor problem. Given $j_0, j_1$ such that $\Phi_\ell(j_0, j_1) = 0$, compute the polynomial $\gamma(x)$,
\[ \gamma(x) := \gcd( \Phi_{\ell}(j_0, x), \Phi_{\ell^2}(j_1, x), H_{D}(x) ) \in(\ZNZ)[x] .\] 
The gcd of $\Phi_{\ell}(j_0, x)$ and $\Phi_{\ell^2}(j_1, x)$ gives a polynomial of degree $\ell$. The potential root they share with $H_{D}(x)$ is the only one with the same endomorphism ring with $j_0$ and $j_1$, which is $j_{-1}$. So $\gamma(x)$ is a linear function. 

This attack is ineffective when $D$ is super-polynomial.

\subsubsection{Deciding the direction of an isogeny on the volcano}

Fix an isogeny volcano $G_{\ell}(\F_q)$ over a finite field $\F_q$. As we mentioned, given a $j$-invariant $j$ on $G_\ell(\F_q)$ there are efficient algorithms that output all the $\ell$-isogenous neighbors of $j$. When there are $1$ or $2$ neighbors, we know $j$ is at the bottom of the volcano (the case of $2$ neighbors corresponding to when the volcano consists just of the surface). When there are more than $2$ (i.e. $\ell+1$) neighbors, can we decide which isogeny is ascending, horizontal, or descending? The method mentioned in \cite{kohel1996endomorphism,fouquet2002isogeny,sutherland2013isogeny} takes a trial and error approach. It picks a random neighbor and goes forward, until the path reaches one of the terminating conditions. 
\ifnum\eprint=0
The only algorithm that is able to produce an isogeny with a designated direction is given by Ionica and Joux \cite{DBLP:journals/moc/IonicaJ13}. They recognize an invariant related to the group structure of the curves lying on the same level of the volcano.

In our application we are mostly interested in finding the unvisited neighbor over $\ZNZ$ that lies on the same level of the volcano, which equals to finding the horizontal isogeny of a given curve. As we conjectured, the algorithm in \cite{DBLP:journals/moc/IonicaJ13} do not extend to $\ZNZ$. So if the task of finding an isogeny with a designated direction is feasible over $\Z/N\Z$, then it is likely to imply a new algorithm of the same task over the finite field, which seems to be challenging on its own.
\else
For instance, if it reaches a point where there is only one neighbor, then that means the path is descending; if the path forms a loop, or takes longer than the estimated maximum depth of the volcano, then the initial step is ascending or horizontal. 

The only algorithm that is able to produce an isogeny with a designated direction is given by Ionica and Joux \cite{DBLP:journals/moc/IonicaJ13}. They recognize an invariant related to the group structure of the curves lying on the same level of the volcano. The highlevel structure of the algorithm is as follows:
\begin{enumerate}
\item First decide the group structure via a pairing (\cite{DBLP:journals/moc/IonicaJ13} uses reduced Tate pairing).
\item Then find the kernel subgroup with a property that depends on whether the desired isogeny is ascending, descending, or horizontal.
\end{enumerate}
Either steps seem to carry out over $\ZNZ$. One of the main barriers is to compute (even the $x$-coordinate of) a point that sits in the specific subgroup of the curve.

The precise descriptions of the invariant and the algorithm are rather technical. So we only sketch the main theorem from \cite{DBLP:journals/moc/IonicaJ13}, skipping many technical details. 
Let $n\geq 0$, let $E[\ell^n](\F_{q^k})$ be the subgroup of points of order $\ell^n$ defined over an extension field over $\F_q$. Let $E[\ell^\infty](\F_{q})$ be the $\ell$-Sylow subgroup of $E(\F_q)$. 

Let $m$ be an integer such that $m\mid \# E(\F_q)$. Let $k$ be the embedding degree, i.e. the smallest integer that $m\mid q^k-1$. 
Let $T_m: E[m](\F_{q^k})\times E(\F_{q^k})/m E(\F_{q^k}) \to \mu_m$ be the reduced Tate pairing, where $\mu_m$ denotes the $m^{th}$ roots of unity. Furthermore, define the symmetric pairing 
\[ S(P, Q) = (T_{\ell^n}(P, Q)\, T_{\ell^n}(Q, P))^{1/2},  \]
and call $S(P, P) = T_{\ell^n}(P, P)$ the self-pairing of $P$.

\begin{theorem}[The main theorem of \cite{DBLP:journals/moc/IonicaJ13}, informally]
Let $E$ be an elliptic curve defined a finite field $\F_q$ and let $E[\ell^\infty](\F_q)$ be isomorphic to $\Z/\ell^{n_1} \Z \times \Z/\ell^{n_2}\Z$ with $n_1 \geq n_2 \geq 1$. Then 
\begin{itemize}
\item Suppose $P$ is a $\ell^{n_2}$-torsion point such that $T_{\ell^{n_2}}(P,P)$ is a primitive ${\ell^{n_2}}$th root of unity. Then the $\ell$-isogeny whose kernel is generated by $\ell^{n_2-1} P$ is descending. 
\item 
Suppose certain condition holds, and let $P$ be a $\ell^{n_2}$-torsion point with degenerate self-pairing. Then the $\ell$-isogeny whose kernel is generated by $\ell^{n_2-1} P$ is either ascending or horizontal. Moreover, for any $\ell^{n_2}$-torsion point $Q$ whose self-pairing is non-degenerate, the isogeny with kernel spanned by $\ell^{n_2-1} Q$ is descending.
\end{itemize}
\end{theorem}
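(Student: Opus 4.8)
The statement concerns $\F_q$-rational points only, so — unlike the problems over $\ZNZ$ discussed above — the two classical ingredients it needs are both at hand, and the plan is to play them against each other: (i) the module-theoretic description of the position of a curve on its $\ell$-volcano, and (ii) the compatibility properties of the reduced Tate pairing recalled just before the theorem.

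First I would pin down a structural dictionary. Writing $\pi$ for the $q$-power Frobenius, the pair $(n_1,n_2)$ is the Smith normal form of $\pi-1$ acting on the $\ell$-adic Tate module $T_\ell E\cong\Z_\ell^2$: one has $n_1+n_2=v_\ell(\#E(\F_q))=:h$, which is constant across the whole isogeny class, while $n_2$ is the $\ell$-valuation of the content of $\pi-1$ inside $\Endo(E)\otimes\Z_\ell$. Feeding in Kohel's volcano structure (Lemma~\ref{lemma:isograph}) — a descending $\ell$-isogeny shrinks the conductor index at $\ell$ by one, an ascending one enlarges it, horizontal isogenies occur only on the surface — and tracking what this does to the content, one shows that away from the ``surface plateau'' a descending step lowers $n_2$ by exactly one, an ascending step raises it by one, and a horizontal step fixes it; equivalently $n_2$ is essentially the distance of $E$ to the floor, capped by $\lfloor h/2\rfloor$. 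In particular $n_2\ge1$ forces $E$ off the floor, so a descending isogeny always exists. Making this dictionary precise — and in the process isolating exactly the ``certain condition'' of the second bullet, which is what excludes the boundary anomalies ($E$ on the plateau, i.e.\ $n_1=n_2$, or volcanoes that are too shallow) — is the step I expect to be the main obstacle.

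Second I would push the reduced Tate pairing through the isogeny $\phi_P$ with kernel $\langle\ell^{n_2-1}P\rangle$. From the compatibility $T'_{\ell^{n}}(\varphi(R),S)=T_{\ell^{n}}(R,\widehat\varphi(S))$ together with bilinearity and Galois-equivariance, one gets a transformation law of the shape $T'(\phi_P(P),\phi_P(P))=T_{\ell^{n_2}}(P,\widehat{\phi_P}\phi_P(P))=T_{\ell^{n_2}}(P,[\ell]P)=T_{\ell^{n_2}}(P,P)^{\ell}$: a descending step along $\phi_P$ strips exactly one factor of $\ell$ from the order of the self-pairing, and pulling back along $\widehat{\phi_P}$ restores it. Combined with perfectness of the full Tate pairing this gives the rigidity we need: the self-pairing of an order-$\ell^{n_2}$ point $P$ can be primitive of order $\ell^{n_2}$ only when $n_2$ is as large as the curve allows, which forces $\ell^{n_2-1}P$ to point in a descending direction rather than the ascending one (or a horizontal one on the surface). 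This is the first bullet: if $T_{\ell^{n_2}}(P,P)$ is primitive and $\phi_P$ were ascending or horizontal, its target would have $n_2$ unchanged or larger, contradicting the transformation law, so $\phi_P$ descends.

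Finally, for the second bullet: when $n_2\ge1$ the assignment $P\mapsto T_{\ell^{n_2}}(P,P)$ is a nontrivial quadratic form on $E[\ell^{n_2}](\F_q)\cong(\Z/\ell^{n_2}\Z)^2$, and a counting argument over this group (at most two of the $\ell+1$ order-$\ell$ directions are non-descending away from the floor, so the form cannot be non-degenerate on every order-$\ell^{n_2}$ point) produces a point $P$ of order $\ell^{n_2}$ with degenerate self-pairing; by the transformation law such a $P$ cannot give a descending $\phi_P$, so by the dictionary $\phi_P$ is horizontal or ascending. The ``moreover'' is then the contrapositive of the first bullet plus the observation that every isogeny from $E$ not already accounted for is descending, so any $Q$ with non-degenerate self-pairing yields a descending $\phi_Q$. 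Modulo the bookkeeping of the first step, the argument is a matter of transporting the perfect Tate pairing across Kohel's volcano.
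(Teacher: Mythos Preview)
The paper does not prove this statement. It is quoted, explicitly ``informally'' and ``skipping many technical details'', from Ionica and Joux \cite{DBLP:journals/moc/IonicaJ13}; the surrounding text only uses the statement to argue that the only known direction-finding algorithm does not seem to carry over to $\ZNZ$. There is therefore no proof in the paper for your proposal to be compared against.

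That said, your sketch is broadly in the spirit of the Ionica--Joux argument: the two pillars are indeed (i) how the invariants $(n_1,n_2)$ of the $\ell$-Sylow subgroup change along the volcano (which is what you call the ``dictionary''), and (ii) compatibility of the reduced Tate pairing with isogenies. A couple of places where your write-up would need tightening before it becomes an actual proof: the transformation identity you wrote is not quite well-typed, since $\phi_P(P)$ has order $\ell^{n_2-1}$ (because $\ell^{n_2-1}P$ generates $\ker\phi_P$), so the self-pairing on the target lives in $\mu_{\ell^{n_2-1}}$ rather than $\mu_{\ell^{n_2}}$, and one has to be careful about which pairing is being compared to which; and the contradiction you derive for the first bullet (``if $\phi_P$ were ascending or horizontal, its target would have $n_2$ unchanged or larger, contradicting the transformation law'') needs the dictionary step to say precisely what happens to the self-pairing order under a non-descending step, not just what happens to $n_2$. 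These are exactly the delicate boundary cases you flag yourself, and they are where the ``certain condition'' in the second bullet comes from in the original paper.
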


Carrying out the algorithm of finding an isogeny with a specific direction (say horizontal) requires finding (even merely the $x$-coordinate of) a point in the designated subgroup. Over a finite field, such points can be found efficiently by picking a random point $R$ and compute $P = \frac{\#(E(\F_q))}{r} R$, where $r$ is the order of the designated subgroup; then test if the candidate point $P$ lies in the correct subgroup by taking pairing. Over $\ZNZ$ it seems hard even to find a point with a specific order since we do not know $\# E(\ZNZ)$. 
Even if we are able to find points in desired subgroups, there are additional technicalities such as computing pairing over $\ZNZ$ (cf. \cite{galbraith2005pairings}).

In our application we are mostly interested in finding the unvisited neighbor over $\ZNZ$ that lies on the same level of the volcano, which equals to finding the horizontal isogeny of a given curve. As we conjectured, the algorithm in \cite{DBLP:journals/moc/IonicaJ13} do not extend to $\ZNZ$. So if the task of finding an isogeny with a designated direction is feasible over $\Z/N\Z$, then it is likely to imply a new algorithm of the same task over the finite field, which seems to be challenging on its own.
\fi


\subsubsection{More about modular curves and characteristic zero attacks}
Given $j$, solving $\Phi_{\ell}(j,x)$ is not the only way to find the $j$-invariants of the $\ell$-isogenous curves. Alternative complex analytic (i.e. characteristic zero) methods have been discussed, for instance, in \cite[Section~3]{elkies1998elliptic}. However, these methods all involve solving polynomials of degree $\geq2$ to get started.

As mentioned in Section~\ref{sec:prelim:isogeny}, the curve $\uhp/\Gamma_0(\ell)$ parameterizes pairs of elliptic curves over $\C$ related by a cyclic $\ell$-isogeny. The $(\ell,\ell^2)$-isogenous neighbor problem, on the other hand, concerns curves that are horizontally $\ell$-isogenous, i.e. $\ell$-isogenous and have the same endomorphism ring. To avoid an attack through characteristic zero techniques, we make sure that there is no immediate quotient of $\uhp$ that parametrizes curves which are related with an $\ell$-isogeny and have the same endomorphism ring. Below, we first go over the well-known moduli description of modular curves\footnote{Recall that a modular curve is a quotient of the extended upper half plane by a congruence subgroup, and a congruence subgroup is a subgroup of $\SL_2(\Z)$, which contains $\Gamma(\ell)$.} to make sure that they don't lead to an immediate attack, and then show that there is indeed no quotient of $\uhp$ between $\uhp/\SL_2(\Z)$ and $\uhp/\Gamma_0(\ell)$, so we don't have to worry about possible attacks on that end.

Let $\Gamma: = \SL_2(\Z)$, and let $\Gamma(\ell)$ and $\Gamma_1(\ell)$ denote the congruence subgroups,

\[ \Gamma(\ell):=\set{ \pmat{ a & b \\ c & d  }\in\SL_2(\Z) \bigg\vert \pmat{ a & b \\ c & d  } \equiv \pmat{ 1 & 0 \\ 0 & 1  }\pmod \ell  }, \]
\[ \Gamma_1(\ell):=\set{ \pmat{ a & b \\ c & d  }\in\SL_2(\Z) \bigg\vert \pmat{ a & b \\ c & d  } \equiv \pmat{ 1 & * \\ 0 & 1  }\pmod \ell  }. \]

It is well-known that the curves $\uhp/\Gamma_1(\ell)$ and $\uhp/\Gamma(\ell)$ parametrize elliptic curves with extra data on their $\ell$-torsion (cf. \cite{kohel1996endomorphism}). $\uhp/\Gamma_1(\ell)$ parametrizes $(E, P)$, where $P$ is a point on $E$ having order exactly $\ell$, and $\uhp/\Gamma(\ell)$ parametrizes triples $(E,P,Q)$, where $E[\ell]=\langle P,Q\rangle$ and they have a fixed Weil pairing. These curves carry more information than the $\ell$-isogenous relation and they are not immediately helpful for solving the $(\ell, \ell^2)$-isogenous neighbor problem.

As for the quotients between $\uhp/\SL_2(\Z)$ and $\uhp/\Gamma_0(\ell)$, the following lemma shows that there are indeed none.



\begin{lemma}
Let $\ell$ be a prime. If $H\leq \Gamma$ is such that $\Gamma_0(\ell)\leq H\leq \Gamma$, then either $H=\Gamma_0(\ell)$ or $H=\Gamma$.
\end{lemma}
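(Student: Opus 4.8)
The plan is to use the correspondence between subgroups $H$ with $\Gamma_0(\ell)\leq H\leq \Gamma$ and subgroups of the coset space $\Gamma_0(\ell)\backslash\Gamma$, and to show this coset space is ``primitive'' in the appropriate sense, so that no intermediate subgroup can exist. Concretely, since $\ell$ is prime, the index $[\Gamma:\Gamma_0(\ell)] = \ell+1$, and the right cosets of $\Gamma_0(\ell)$ in $\Gamma$ are in bijection with $\Plane^1(\F_\ell)$ (the cyclic subgroups of $(\Z/\ell\Z)^2$), via the action of $\Gamma$ on $\Plane^1(\F_\ell)$ by reduction mod $\ell$ — here $\Gamma_0(\ell)$ is precisely the stabilizer of the point $[1:0]$. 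So I first reduce the claim to a statement about the transitive $\Gamma$-action on the $(\ell+1)$-element set $X = \Plane^1(\F_\ell)$: an intermediate subgroup $H$ corresponds to a block of imprimitivity containing $[1:0]$, and I need to show the only blocks are the trivial ones (a single point, giving $H=\Gamma_0(\ell)$; or all of $X$, giving $H=\Gamma$).

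First I would set up the bijection $\Gamma_0(\ell)\backslash\Gamma \leftrightarrow \Plane^1(\F_\ell)$ and recall the standard fact that subgroups $H$ with $\Gamma_0(\ell)\leq H\leq\Gamma$ correspond bijectively to $\Gamma$-invariant partitions of $X$ refined by the point $[1:0]$ — equivalently, to blocks of the $\Gamma$-action containing $[1:0]$. Then the core step: the action of $\Gamma = \SL_2(\Z)$ on $\Plane^1(\F_\ell)$ factors through $\SL_2(\F_\ell)$, and the image of $\SL_2(\F_\ell)$ acting on $\Plane^1(\F_\ell) \cong \Plane^1(\F_\ell)$ is the group $\mathrm{PSL}_2(\F_\ell)$ (up to the action of $\pm I$ which acts trivially on $\Plane^1$), acting $2$-transitively on $\ell+1$ points. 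Since any $2$-transitive action is primitive, there are no nontrivial blocks, which finishes the proof. I expect the main obstacle — really the only nonroutine point — is cleanly justifying the $2$-transitivity: one should exhibit, for any two ordered pairs of distinct points in $\Plane^1(\F_\ell)$, an element of $\SL_2(\F_\ell)$ carrying one to the other, which is immediate since $\SL_2(\F_\ell)$ acts transitively on ordered bases of $\F_\ell^2$ up to scaling and two distinct points of $\Plane^1(\F_\ell)$ span $\F_\ell^2$.

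An alternative, more hands-on route that avoids invoking $2$-transitivity abstractly: suppose $\Gamma_0(\ell)\lneq H$, pick $\gamma\in H\setminus\Gamma_0(\ell)$, and use that $\gamma$ does not fix $[1:0]\in\Plane^1(\F_\ell)$. Then $H$ contains $\Gamma_0(\ell)$ together with an element moving $[1:0]$ to some other point $[u:v]$ with $v\not\equiv 0$. Using explicit generators of $\Gamma_0(\ell)$ — for instance $T=\pmat{1 & 1\\ 0 & 1}$ and the diagonal/unipotent-lower matrices lying in $\Gamma_0(\ell)$ — together with $\gamma$, one shows the $H$-orbit of $[1:0]$ is all of $\Plane^1(\F_\ell)$: $\Gamma_0(\ell)$ already acts transitively on $\Plane^1(\F_\ell)\setminus\{[1:0]\}$ (the lower unipotent subgroup $\pmat{1 & 0\\ c\ell & 1}$... actually one needs the full stabilizer structure, so care is needed here), hence adding any single element outside $\Gamma_0(\ell)$ produces the whole of $X$ as one orbit, forcing $H$ to be the full stabilizer-generated group, i.e. $H=\Gamma$. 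I would present whichever of these is shortest; the primitivity/$2$-transitivity argument is cleanest, so I would lead with that and only fall back to the explicit computation if a self-contained proof of $2$-transitivity of $\mathrm{PSL}_2(\F_\ell)$ on $\Plane^1(\F_\ell)$ is deemed to need spelling out.
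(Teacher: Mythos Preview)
Your primary approach via primitivity of the $\Gamma$-action on $\Plane^1(\F_\ell)$ is correct and is genuinely different from the paper's proof. The paper argues directly at the level of $\SL_2(\F_\ell)$: assuming $H\neq\Gamma_0(\ell)$, it picks $h\in H\setminus\Gamma_0(\ell)$ and, using that $\pi(\Gamma_0(\ell))$ already contains all upper triangular matrices, manipulates $\pi(h)$ by explicit row operations to produce the lower unipotent generator $\sigma_2=\left(\begin{smallmatrix}1&0\\1&1\end{smallmatrix}\right)$ (or $\sigma_3$) inside $\pi(H)$; this forces $\pi(H)=\SL_2(\F_\ell)$, and then $\Gamma(\ell)\subset\Gamma_0(\ell)\subset H$ finishes it. So the paper is essentially carrying out your ``alternative hands-on route,'' done cleanly. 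Your conceptual route (stabilizer correspondence $\Rightarrow$ blocks, $2$-transitivity of $\mathrm{PSL}_2(\F_\ell)$ on $\Plane^1(\F_\ell)$ $\Rightarrow$ primitivity) is shorter once the standard permutation-group facts are granted, while the paper's argument is entirely self-contained.

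One small correction to your backup sketch: the lower unipotent matrices $\left(\begin{smallmatrix}1&0\\c\ell&1\end{smallmatrix}\right)\in\Gamma_0(\ell)$ reduce to the identity mod $\ell$ and move nothing in $\Plane^1(\F_\ell)$; the transitivity of $\pi(\Gamma_0(\ell))$ on $\Plane^1(\F_\ell)\setminus\{[1:0]\}$ comes instead from the \emph{upper} unipotents $T^b=\left(\begin{smallmatrix}1&b\\0&1\end{smallmatrix}\right)$, which send $[a:1]\mapsto[a+b:1]$. With that fix, your alternative argument also goes through and matches the paper's in spirit.
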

\begin{proof} Let $\sigma_1=\left(\begin{smallmatrix}1&1\\0&1\end{smallmatrix}\right)$, $\sigma_2=\left(\begin{smallmatrix}1&0\\1&1\end{smallmatrix}\right)$, $\sigma_3=\sigma_1\sigma_2^{-1}$, and recall that $SL_2(\Z/\ell\Z)=\langle\sigma_1,\sigma_2\rangle=\langle\sigma_1,\sigma_3\rangle$. Recall that the natural projection $\pi: \Gamma \rightarrow SL_2(\Z/\ell\Z)$ is surjective. Assume that $H\neq \Gamma_0(\ell)$. This implies that $\pi(H)=SL_2(\Z/\ell\Z)$ (we shall give a proof below). Assuming this claim for the moment let $g\in \Gamma\backslash H$. Since $\pi(\Gamma)=\pi(H) $ there exists $h\in H$ such that $\pi(g)=\pi(h)$. Therefore, $gh^{-1}\in \ker(\pi)=\Gamma(\ell)\subset H$. Therefore, $g\in H$ and $\Gamma=H$.

To see that $\pi(\Gamma)=\pi(H)$, first note that since $\Gamma_0(\ell) \subset H$ we have  all the upper triangular matrices in $ \pi(H)$. Next, let $h=\left(\begin{smallmatrix}h_1&h_2\\h_3&h_4\end{smallmatrix}\right)\in H\backslash \Gamma_0(\ell)$ such that $\pi(h)=\left(\begin{smallmatrix}\bar{h}_1&\bar{h}_2\\ \bar{h}_3&\bar{h}_4\end{smallmatrix}\right)\in \pi(H)\backslash \pi(\Gamma_0(\ell))$ (note that this difference is non-empty since otherwise $\Gamma_0(\ell)=H$). \\
We have two cases depending on $\bar{h}_1= 0$ or not. If $\bar{h}_1=0$ then $\bar{h}_3\neq0$ and $\sigma_3=\left(\begin{smallmatrix}\bar{h}_3^{-1}&\bar{h}_4\\ 0&\bar{h}_3\end{smallmatrix}\right)\bar{h}^{-1}\in \pi(H)$. On the other hand, if $\bar{h}_1\neq0$ multiplying on the right by $\left(\begin{smallmatrix}\bar{h}_1^{-1}&-\bar{h}_2\\0&\bar{h}_1\end{smallmatrix}\right) \in \pi(H)$ we see that $\left(\begin{smallmatrix}1&0\\ \bar{h}_3\bar{h}_1^{-1}&1\end{smallmatrix}\right) \in \pi(H)$ . For any integer $m$, the $m$'th power of this matrix is $\left(\begin{smallmatrix}1&0\\ m\bar{h}_3\bar{h}_1^{-1}&1\end{smallmatrix}\right) \in \pi(H)$. Taking $m\equiv \bar{h}_1\bar{h}_3^{-1}$ shows that $\sigma_2\in \pi(H)$. This shows that $\pi(H)=SL_2(\Z/\ell\Z)$.
\end{proof}

Let us also remark that for special values of $\ell$ the $(\ell,\ell^2)$-isogenous neighbor problem may be more prone to characteristic zero attacks (although we do not know of such an attack). For instance, for those $\ell$ for which $\uhp/\Gamma_0(\ell)$ (more precisely, its compactification $X_0(\ell)$) is genus $0$ or $1$, it (may) have many rational points. These points, in turn, can be used to get points over $\ZNZ$ without knowing the factorization of $N$. Nevertheless, we may just avoid these values in the system. As mentioned above, we currently do not see an attack based on this. This point is brought up just as an extra precautionary measure.

\subsection{Cryptanalysis of the candidate group with infeasible inversion}\label{sec:attack_concrete_TGII}

We now cryptanalyze the concrete candidate TGII. Recall the format of an encoding of a group element $x$ from Definition~\ref{def:represent_enc}:
\[ \enc(x) = (L_x; T_{x,1}, ..., T_{x,w_x}) = ((p_{x,1}, ..., p_{x,w_x}); (j_{x,1,1}, ..., j_{x,1,e_{x,1}}), ..., (j_{x,w_x,1}, ..., j_{x,w_x,e_{x,w_x}}) ) .\]
The ``exponent vector" $\ary{e}_x\in\Z^{w_x}$ can be read from the encoding as $ \ary{e}_x = ( |T_{x,1}|, ..., |T_{x,w_x}| )$. 

We assume polynomially many composable encodings are published in the applications of a TGII. In down-to-earth terms it means the adversary is presented with polynomially many $j$-invariants on the crater of a volcano, and the explicit isogenies between each pair of the neighboring $j$-invariants (due to Proposition~\ref{claim:kernelpoly}). 

We will be considering the following model on the adversary's attacking strategy:
\begin{definition}[The GCD attack model]\label{def:gcd_eval}
In the GCD attack model, the adversary is allowed to try to find the inverse of a target group element only by executing the unit gcd operation $\gcdop(\PP, \ell_1, \ell_2; j_1, j_2)$ given in Algorithm~\ref{alg:gcdop} for polynomially many steps, where $\ell_1, \ell_2; j_1, j_2$ are from the published encodings or obtained from the previous executions of the gcd evaluations.
\end{definition}
We do not know how to prove the construction of TGII is secure even if the adversary is restricted to attack in the GCD model. Our cryptanalysis attempts can be classified as showing (1) how to prevent the attacks that obey the GCD evaluation law\ifnum\eprint=1  ~(mainly in \S\ref{sec:attack_concrete_TGII_trivial} and \S\ref{sec:parallelogram_attack}); \else; \fi (2) how to prevent the other attack approaches.

\subsubsection{Preventing the trivial leakage of inverses}\label{sec:attack_concrete_TGII_trivial}
In applications we are often required to publish the encodings of elements that are related in some way. A typical case is the following: for $x, y\in\CL(\OOO)$, the scheme may require publishing the encodings of $x$ and $z = y\circ x^{-1}$ without revealing a valid encoding of $x^{-1}$. As a toy example, let $x = [(p_x, b_x, \cdot)]$, $y = [(p_y, b_y, \cdot)]$, where $p_x$ and $p_y$ are distinct primes. Let $j_0$, the $j$-invariant of a curve $E_0$, represent the identity element in the public parameter. Let $((p_x); (j_x))$ be a composable encoding of $x$ and $((p_y); (j_y))$ be a composable encoding of $y$.

Naively, a composable encoding of $z = y\circ x^{-1}$ could then be $((p_x, p_y); (j_{x^{-1}}), (j_{y}))$, where $j_{x^{-1}}$ is the $j$-invariant of $E_{x^{-1}} = x^{-1}E_0$. Note, however, that $((p_x); (j_{x^{-1}}))$ is a valid encoding of $x^{-1}$. In other words such an encoding of $y\circ x^{-1}$ trivially reveals the encoding of $x^{-1}$. 

One way of generating an encoding of $z = y\circ x^{-1}$ without trivially revealing $j_{x^{-1}}$ is to first pick a generator set of ideals where the norms of the ideals are coprime to $p_x$ and $p_y$, then solve the discrete-log of $z$ over these generators to compute the composable encoding. This is the approach we take in this paper.



\subsubsection{Parallelogram attack}\label{sec:parallelogram_attack}
In the applications we are often required to publish the composable encodings of group elements $a$, $b$, $c$ such that $a\circ b = c$. If the degrees of the three encodings are coprime, or in general, when $\enc(a)$, $\enc(b)$ and $\enc(c)$ are pairwise composable, then we can recover the encodings of $a^{-1}$, $b^{-1}$, and $c^{-1}$ using the following ``parallelogram attack". This is a non-trivial attack which obeys the gcd evaluation law in Definition~\ref{def:gcd_eval}.

Let us illustrate the attack via the examples in Figure~\ref{fig:parallelogram}, where the solid arrows represent the isogenies that are given as the inputs (the $j$-invariants of the target curves are written at the head of the arrows, their positions do not follow the relative positions on the volcano; the degree of the isogeny is written on the arrow); the dashed lines and the $j$-invariants on those lines are obtained from the gcd evaluation law. 

\begin{figure}
  \centering
    \begin{tikzpicture}
      \begin{scope}[xshift=1cm, yshift = 0.7cm]
        \draw node at (0.  ,0) (O) {$j_0$};
        \draw node at (-0.4,3) (A) {$j_1$};
        \draw node at (2.4 ,0) (B) {$j_2$};
        \draw node at (2   ,3) (C) {$j_3$};
        \draw node at (-2.4  , 0)      (D) {$j_{4}$};

        \draw [->] (O) -- node[auto,swap] {\scriptsize$\ell_1$} (A);
        \draw [->] (O) -- node[auto,swap] {\scriptsize$\ell_2$} (B);
        \draw [->] (O) -- node[auto,swap] {\scriptsize$\ell_3$} (C);
        \draw (B) edge[dashed] (C);        
        \draw (A) edge[dashed] (C);        
        \draw (A) edge[dashed] (D);        
        \draw (O) edge[dashed] (D);        
      \end{scope}

      \begin{scope}[xshift=8cm, yshift = 0.7cm]
        \draw node at (0.  ,0) (O) {$j_0$};
        \draw node at (-0.4,2) (A2) {$j_4$};
        \draw node at (-0.6,3) (A) {$j_1$};
        \draw node at (1.4 ,0) (B2) {$j_5$};
        \draw node at (3.6 ,0) (B) {$j_2$};
        \draw node at (1.6 ,1.6) (C2) {$j_6$};
        \draw node at (3   ,3) (C) {$j_3$};

        \draw node at (1.4 - 0.4, 2) (B2-A2) {$j_7$};
        \draw node at (1.4 - 0.6, 3) (B2-A) {$j_8$};
        \draw node at (1.6 - 2.2 , 1.6) (D1) {$j_{9}$};
        \draw node at (-3.6 + 1.4 , 0) (D2) {$j_{10}$};
        \draw node at (-3.6 + 1.6 , 1.6) (D3) {$j_{11}$};
        \draw node at (-3.6  , 0)      (D4) {$j_{12}$};

        \draw [->] (O) -- node[auto,swap] {\scriptsize$\ell_4$} (A2);
        \draw [->] (O) -- node[auto,swap] {\scriptsize$\ell_5$} (B2);
        \draw [->] (O) -- node[auto,swap] {\scriptsize$\ell_6$} (C2);
        \draw [->] (A2) -- node[auto,swap] {\scriptsize$\ell_1$} (A);
        \draw [->] (B2) -- node[auto,swap] {\scriptsize$\ell_2$} (B);
        \draw [->] (C2) -- node[auto,swap] {\scriptsize$\ell_3$} (C);
        \draw (B2) edge[dashed] (B2-A);        
        \draw (B) edge[dashed] (C);        
        \draw (A) edge[dashed] (C);        
        \draw (A) edge[dashed] (D4);        
        \draw (O) edge[dashed] (D4);        
        \draw (C2) edge[dashed] (D3);        
        \draw (B2-A) edge[dashed] (D2); 
        \draw (B2-A2) edge[dashed] (A2);        
      \end{scope}
    \end{tikzpicture}
    \caption{The parallelogram attack.  }
  \label{fig:parallelogram}
\end{figure}
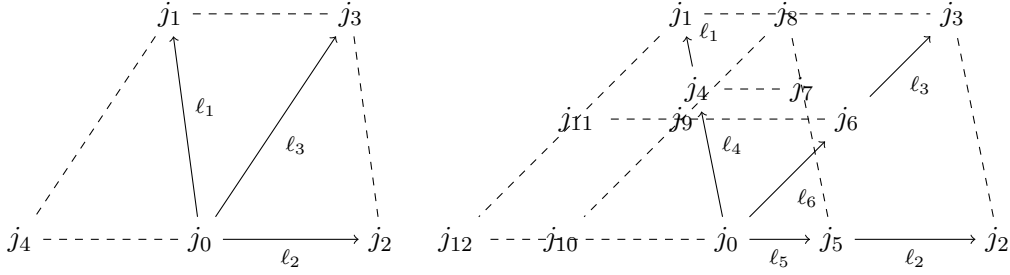

For simplicity let us first look at the example in the left figure. Let composable encodings of $a$, $b$, $c$ be simply given by $(\ell_1; (j_1))$, $(\ell_2; (j_2))$, $(\ell_3; (j_3))$, where $\ell_1$, $\ell_2$, $\ell_3$ are polynomial and pairwise coprime. 
A composable encoding of $b^{-1}$ then can be written as $(\ell_2; (j_4))$, where $j_4$ is the root of the linear equation $f(x) = \gcd( \Phi_{\ell_3}(j_1, x) , \Phi_{\ell_2}(j_0, x) )$. This is due to the relation $c\circ b^{-1} = a$, which, in particular implies that $j_1$ and $j_4$ are connected by an isogeny of degree $\ell_3$. 

The simple attack above uses the fact that the degrees of the entire encodings of $a$, $b$ and $c$ are polynomial. Let us use the example in the right figure to illustrate that even if the encodings are composed of many polynomial degree isogenies (so that the total degrees may be super-polynomial), the attack is still effective.
The idea is to view the composition law as filling the missing edges of a parallelogram given the $j$-invariants on a pair of adjacent edges. 
The final goal is to find the missing corner $j_{12}$ in the parallelogram $j_0 - j_3 - j_1 - j_{12}$. To arrive there we need the $j$-invariants on a pair of adjacent edges to begin with, so we first have to fill the $j$-invariants on, for instance, the edge $j_1-j_3$. 
Therefore, as the first step, we consider the parallelogram $j_0 - j_2 - j_3 - j_1$. To fill the $j$-invariants on the edge $j_1 - j_3$, we first compute $j_7$ as the root of $f_7(x) = \gcd( \Phi_{\ell_4}(j_5, x) , \Phi_{\ell_5}(j_4, x) )$, then compute $j_8$ as the root of $f_8(x) = \gcd( \Phi_{\ell_1}(j_7, x) , \Phi_{\ell_5}(j_1, x) )$ (the polynomials $f_7$, $f_8$ are linear since the degrees of $\enc(a)$ and $\enc(b)$ are coprime).
In the second step, we consider the parallelogram $j_0 - j_3 - j_1 - j_{12}$. To find $j_{12}$ we use the gcd evaluation law to find $j_9$, $j_{10}$, $j_{11}$, $j_{12}$ one-by-one (using the condition that the degrees of $\enc(c)$ and $\enc(b)$ are coprime). 

In the previous examples we illustrated that the attack is applicable when the degrees of $\enc(a)$, $\enc(b)$, $\enc(c)$ are coprime. Let us also remark that the attack applies as long as $\enc(a)$, $\enc(b)$, $\enc(c)$ are pairwise composable, i.e. they don't have to be of coprime degrees, which happens when the ladders in the general algorithms are applied (cf. Algorithm~\ref{alg:ladder} in \S\ref{sec:TGIIconstruction_general}).

The parallelogram attack is very powerful, in the sense that it is not preventable when application requires to publish the composable encodings of $a$, $b$, $c$ such that $a\circ b = c$, and $\enc(a)$, $\enc(b)$, $\enc(c)$ to be pairwise composable. However, the parallelogram attack does not seem to work when 2 out of the 3 pairs of $\enc(a)$, $\enc(b)$ and $\enc(c)$ are not composable. In the applications of directed transitive signature and broadcast encryption, there are encodings of $a$, $b$, $c$ such that $a\circ b = c$. Luckily, only one pair of the encodings among the three has to be composable to provide the necessary functionalities of these applications. We will explicitly mention how to choose the prime factors in the degrees of these encodings to prevent the parallelogram attack.

\subsubsection{Hiding the class group invariants: why and how}\label{sec:attack:basis}

The hardness of discrete-log problem over $\CL(D)$ is necessary for the infeasibility of inversion\footnote{The actual group with infeasible inversion is $\CL(D)_{\text{odd}}$; here we discuss over $\CL(D)$ for notational simplicity.}. Recall from Lemma~\ref{lemma:expander} that, under GRH, the graph $\mathcal{G}_{\OOO, m}(\K)$ for a sufficiently large $m$ is an expander, which means it is reasonable to assume that the closure of the composition of polynomially many $j$-invariants covers all the $h(D)$ $j$-invariants. 
So finding a composition of the published $j$-invariants that reaches a specific vertex (e.g. the vertex that represents the inverse of some encoded element) in the isogeny graph is feasible by solving the discrete-log problem over $\CL(D)$. 

However, in the basic setting of the parameters, the class number $h(D)$ is polynomially smooth, so the discrete-log problem over $\CL(D)$ can be solved in polynomial time once $h(D)$ is given, and $h(D)$ can be recovered from $D$ or any basis of a lattice $\Lattice$ of dimension $w$ such that $\Z^{w}/\Lattice \simeq \CL(D)$.  So we do need to hide the discriminant $D$, the class number $h(D)$, and any lattice $\Lattice$ defined above. 

If self-composition of an encoding is feasible, then one can efficiently guess all the polynomially smooth factors of $h(D)$. However for our construction self-composition is infeasible, due to the hardness of the $(\ell, \ell^2)$-isogenous neighbor problem.  Nevertheless, one can still attack by first guessing $D$ or $h(D)$, which takes $\secp^{O(\secp)}$ time according to the current setting of parameter.

We remark that if it is feasible to choose a super-polynomially large square-free discriminant $D$ with a trapdoor $\tau_D$ so that solving discrete-log over $\CL(D)$ given $D$ is hard but feasible given the trapdoor $\tau_D$, then we might be able to maintain the security of the system even if $D$ and $h(D)$ are public. This would avoid all the complications in hiding the class group invariants. However currently we do not know of such a method, so we need to hide the class group invariants.

\paragraph{The possibility of recovering the discriminant $D$. }
Recall that for an elliptic curve over a finite field $\F_q$, the discriminant $D$ can be obtained by first computing the trace $t$ of Frobenius, and then computing the integers $v$ and $D_0$ such that $v^2 D_0 = t^2 - 4q$, and $D_0$ is square-free. Then $D = u^2 D_0$, where $u\mid v$. When $v$ is smooth $D$ can be recovered efficiently. 

Over $\Z/N\Z$, on the other hand, since we loose the notion of the Frobenius automorphism, we do not know how to apply the previous algorithm. Also, just a set of $j$-invariants over $\ZNZ$, corresponding to curves with the same endomorphism rings over $\F_p$ and $\F_q$, does not seem to allow us to recover the discriminant.

However, we do know that the range of $D$ is bounded by $|D|\leq 4p$ and $|D|\leq 4q$, so that $|D|<\sqrt{N}$. From the encodings we also learn the set $\mathcal{P}_B = \set{ p\leq B \mid p\text{ is a prime, } \kron{D}{p} = 1 }$, where $B\in \poly(\secp)$. 
This brings us to the following problem of independent interest: 
\begin{definition}\label{def:problem_recover_disc}
Given an integer $N$ and a set of primes $\mathcal{P} = \set{ p_1, ..., p_m  }$, find a negative value $D$ such that $|D|<\sqrt{N}$ and $\kron{D}{p} = 1$ for all $p\in\mathcal{P}$.
\end{definition}

Note that each condition $\kron{D}{p}=\pm1$ cuts the possible $D$'s roughly by half. Hence, for a sequence $\epsilon_1,\epsilon_2,\cdots,\epsilon_{\omega(\log(N))}$, $\epsilon_j\in\{\pm1\}$, with high probability there is at most one $D$ with $|D|<\sqrt{N}$ that satisfies $\epsilon_j=\kron{D}{p_j}$. To find such a $D$, on the other hand, is a separate problem and the only known methods require $\sqrt{|D|}$ of the values of the sequence $\{\epsilon_j\}$ (cf. pg. 6 of \cite{hoffstein2014ahist}, also see pg. 14 of loc. cit. and \cite{goldfeld1993on} for the analogue of the same problem in the context of modular forms)

We also remark that the following similar problem, first mentioned by Damg{\aa}rd \cite{DBLP:conf/crypto/Damgard88}, is conjectured to be hard: 
\begin{definition}[Problem P1 in \cite{DBLP:conf/crypto/Damgard88}]\label{def:problem_Legrendre_symbol}
The Legendre sequence with length $\ell$ and starting point $a$ is the $\pm 1$ sequence
\[  L = \kron{a}{p},  \kron{a+1}{p}, ..., \kron{a+\ell}{p} . \]
Given $L$ (with a polynomial $\ell$) but not $a$ and $p$, the problem asks to determine $\kron{a+\ell+1}{p}$.
\end{definition}




Even if Problem~\ref{def:problem_recover_disc} can be solved efficiently, an obvious safeguard is to choose less than $\log(|D|)$ many primes for the degrees of the isogenies involved in the composable encodings. However, it is not always possible to restrict the number of distinct prime degrees in applications. In that case, we can apply another safeguard by first choosing an enlarged set of $k$ polynomially large primes $R = \set{ p_1, ..., p_k }$, and letting $R'$ be a random subset of $R$. We can then set $D = (\prod_{p\in R'} p )^2  D_0$, where $D_0$ is a polynomial size fundamental discriminant. Then in the composable encodings, we avoid the primes from the entire set $R$ as the degrees of the isogenies.

\paragraph{The possibility of revealing $\Lattice$. }

Revealing any full-rank (not necessarily short) basis $\mat{B}$ of a relation lattice $\Lattice$ of dimension $w$ such that $\Z^{w}/\Lattice \simeq \CL(D)$ implies the disclosure of the class number $h(\OOO)=| \det(\mat{B})|$. So we should prevent leaking any full-rank bases of such lattices. 

An immediate consequence is that we cannot give out many non-trivial encodings of $1_\G$ (where $\G = \CL(\OOO)$) that form a full-rank basis of a potential relation lattice $\Lattice$. 
Recall that the trivial (i.e. canonical) encoding of $1_\G$ is $j_0$. A non-trivial encoding of $1_\G$ can be obtained by composing encodings which lead to a non-zero exponent vector. As an example, suppose that the encodings of $x, y, z\in\CL(\OOO)$ share the same prime generation set of norm from $L$, and $x\circ y \circ z = 1_\G$, and denote them by $\enc(v) = (L; T_{v, 1}, ..., T_{v, |L|})$, $v\in\set{x, y, z}$. Then, the following vector is in $\Lattice$
\[ \ary{e}_{x\circ y \circ z} = ( |T_{x, 1}|+|T_{y, 1}|+|T_{z, 1}|, ..., |T_{x, |L|}|+|T_{y, |L|}|+|T_{z, |L|}| ).\] 
Note that in the example we are not required to compute the composable encoding of $x\circ y \circ z$. We only need to read off the exponents from the lengths of $T_{*,\star}$. 

In the applications we do face the situation where the general security setting does not restrict the number of non-trivial encodings of $1_\G$. A countermeasure is to enforce each distinct non-zero encoding of $1_\G$ to have an new ideal with distinct prime norm, so that the dimension of the potential basis is larger than the possible number of vectors (relations) to be collected. We will illustrate how to apply this countermeasure in the applications.

Even for the encodings of non-identity elements, sampling the vector $\ary{e}$ from an arbitrary distribution over the cosets of $\Lattice$ might leak a basis. For example, \cite{DBLP:conf/eurocrypt/NguyenR06} shows that if there are enough short vectors from the parallelepiped of a short basis, then one can find the parallelepiped and, therefore, recover the basis.

The countermeasure is to sample the vector $\ary{e}_x\in\Z^{w_x}$ from the discrete-Gaussian distribution \cite{DBLP:conf/stoc/GentryPV08}. The sampler is known of being basis-independent. 

Formally, for any $\sigma \in\R^+$, $\ary{c}\in\R^n$, define the (non-normalized) Gaussian function with center $\ary{c}$ and standard deviation $\sigma$ for any $\ary{x}\in \R^n$ as $\rho_{\sigma, \ary{c}}(\ary{x}) = e^{-\pi \|\ary{x}-\ary{c}\|^2/\sigma^2}$.
For any $n$-dimensional lattice $\Lattice$, define the discrete Gaussian distribution over $\Lattice$ as:
\[ \forall \ary{x}\in\Lattice, ~ D_{\Lattice, \sigma, \ary{c}}(\ary{x}) 
        = \frac{\rho_{\sigma, \ary{c}}(\ary{x})}{\rho_{\sigma, \ary{c}}(\Lattice)}, \]
where $\rho_{\sigma, \ary{c}}(\Lattice) := \sum_{\ary{y} \in\Lattice} \rho_{\sigma, \ary{c}}(\ary{y})$ is the normalization factor.

\begin{lemma}[\cite{MicciancioRegev07}]\label{lemma:gaussianbound}
Let $\mat{B}$ be a basis of an $n$-dimensional lattice $\Lattice$, and let $\sigma \geq \|\tilde{\mat{B}}\|\cdot\omega( \log n)$, then $\Pr_{\ary{x}\la D_{\Lattice, \sigma, \ary{0}}}[ \|\ary{x}\|\geq \sigma\cdot \sqrt{n} \lor \ary{x}=\ary{0}]\leq \negl(n)$.
\end{lemma}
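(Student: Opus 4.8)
The statement is precisely the tail bound of \cite{MicciancioRegev07} (a discrete analogue of Banaszczyk's inequality combined with the smoothing-parameter estimate in terms of a Gram--Schmidt norm), so the plan is to recall the relevant pieces from that paper and assemble them, rather than to prove anything new. The central quantity is the \emph{smoothing parameter} $\eta_\epsilon(\Lattice)$, the least $s>0$ with $\rho_{1/s}(\Lattice^*\setminus\{\ary 0\})\leq\epsilon$, where $\Lattice^*$ denotes the dual lattice. I would proceed in three steps and then take a union bound.

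First, control the smoothing parameter by $\|\tilde{\mat B}\|$. The classical estimate (Lemma~3.1 of \cite{MicciancioRegev07}, see also \cite{DBLP:conf/stoc/GentryPV08}) gives $\eta_\epsilon(\Lattice)\leq \|\tilde{\mat B}\|\cdot\sqrt{\ln(2n(1+1/\epsilon))/\pi}$ for any basis $\mat B$ of $\Lattice$. Choosing $\epsilon=\epsilon(n)$ to be a negligible function that is only mildly small, say $\epsilon=n^{-\omega(1)}$ (equivalently $2^{-\omega(\log n)}$), makes $\sqrt{\ln(2n(1+1/\epsilon))/\pi}=\omega(\sqrt{\log n})$, so the hypothesis $\sigma\geq\|\tilde{\mat B}\|\cdot\omega(\log n)$ in particular forces $\sigma\geq\eta_\epsilon(\Lattice)$. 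Second, apply the discrete Gaussian tail bound of \cite{MicciancioRegev07}: whenever $\sigma\geq\eta_\epsilon(\Lattice)$ one has $\Pr_{\ary x\la D_{\Lattice,\sigma,\ary 0}}[\|\ary x\|>\sigma\sqrt n]\leq \frac{1+\epsilon}{1-\epsilon}\,2^{-n}$, which with the above choice of $\epsilon$ is at most $2^{-n+1}=\negl(n)$; the discrepancy between ``$\geq\sigma\sqrt n$'' in the statement and ``$>\sigma\sqrt n$'' here is absorbed by an arbitrarily small change of radius or constant.

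Third, bound the point mass at the origin, $D_{\Lattice,\sigma,\ary 0}(\ary 0)=1/\rho_\sigma(\Lattice)$. Poisson summation gives $\rho_\sigma(\Lattice)=\tfrac{\sigma^n}{\det\Lattice}\,\rho_{1/\sigma}(\Lattice^*)\geq\tfrac{\sigma^n}{\det\Lattice}\geq\big(\sigma/\|\tilde{\mat B}\|\big)^n$, using $\det\Lattice=\prod_i\|\tilde{\ary b}_i\|\leq\|\tilde{\mat B}\|^n$ and $\rho_{1/\sigma}(\Lattice^*)\geq 1$. Since $\sigma/\|\tilde{\mat B}\|=\omega(\log n)$ exceeds $2$ for all large $n$, this is at least $2^n$, hence $D_{\Lattice,\sigma,\ary 0}(\ary 0)\leq 2^{-n}$. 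A union bound over the event $\{\|\ary x\|\geq\sigma\sqrt n\}$ and the event $\{\ary x=\ary 0\}$ then yields probability at most $2^{-n}+2^{-n+1}=\negl(n)$, as claimed.

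The proof is essentially bookkeeping, and I do not anticipate a genuine obstacle. The only point needing a little care is the choice of the auxiliary parameter $\epsilon$: it must be small enough that both $2^{-n}$-type quantities are negligible, yet large enough that the smoothing-parameter bound still requires only an $\omega(\sqrt{\log n})$ slack over $\|\tilde{\mat B}\|$ (and not, say, $\omega(\sqrt n)$, which would happen for $\epsilon=2^{-n}$); any $\epsilon=n^{-\omega(1)}$ satisfies both, which is exactly why the hypothesis is stated with an $\omega(\cdot)$ rather than a fixed constant.
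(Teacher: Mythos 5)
Your proposal is correct: it assembles exactly the ingredients (the smoothing-parameter bound in terms of $\|\tilde{\mat{B}}\|$, Banaszczyk's discrete tail bound, and the Poisson-summation bound on the mass at $\ary{0}$) that underlie the cited result. The paper itself gives no proof of this lemma — it simply imports it from \cite{MicciancioRegev07} — so your reconstruction is essentially the standard derivation from that source, with the only delicate point (the choice of the auxiliary $\epsilon$ relative to the $\omega(\cdot)$ slack) handled appropriately.
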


\begin{lemma}[\cite{DBLP:conf/stoc/GentryPV08,brakerski2013classical}]\label{lemma:discreteGaussianSamp}
There is a p.p.t. algorithm that, given a basis $\mat{B}$ of an $n$-dimensional lattice $\Lattice(\mat{B})$, $\ary{c}\in\R^n$, and $\sigma \geq \|\tilde{\mat{B}}\| \sqrt{\ln(2n + 4)/\pi}$, outputs a sample from $D_{\Lattice,\sigma,\ary{c}}$.
\end{lemma}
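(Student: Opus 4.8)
The plan is to prove this by exhibiting and analyzing Klein's randomized nearest-plane algorithm, in the sharpened form of \cite{DBLP:conf/stoc/GentryPV08,brakerski2013classical}. First I would compute the Gram--Schmidt orthogonalization $\tilde{\mat{B}}=(\tilde{\ary{b}}_1,\dots,\tilde{\ary{b}}_n)$ of $\mat{B}=(\ary{b}_1,\dots,\ary{b}_n)$. The sampler then runs recursively from $i=n$ down to $i=1$: it keeps a running target, initialized to $\ary{c}_n:=\ary{c}$, and at step $i$ it sets $\mu_i:=\langle \ary{c}_i,\tilde{\ary{b}}_i\rangle/\|\tilde{\ary{b}}_i\|^2$ and $\sigma_i:=\sigma/\|\tilde{\ary{b}}_i\|$, draws an integer $z_i\la D_{\Z,\sigma_i,\mu_i}$ from the one-dimensional discrete Gaussian over $\Z$, and updates $\ary{c}_{i-1}:=\ary{c}_i-z_i\ary{b}_i$. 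The output is $\ary{v}:=\sum_i z_i\ary{b}_i\in\Lattice(\mat{B})$. Efficiency would be immediate: the one-dimensional sampler $D_{\Z,\sigma_i,\mu_i}$ is itself polynomial time (rejection sampling works, since $\sigma_i\geq\sqrt{\ln(2n+4)/\pi}$ by the hypothesis on $\sigma$ and $\|\tilde{\ary{b}}_i\|\leq\|\tilde{\mat{B}}\|$, which is bounded below by an absolute constant), and Gram--Schmidt plus the $n$ update steps cost only polynomial time.

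For correctness I would compute the probability that a fixed lattice point $\ary{v}=\sum_i z_i\ary{b}_i$ is output. Expanding $\ary{v}-\ary{c}=\sum_i x_i\tilde{\ary{b}}_i$ in the Gram--Schmidt basis, a short induction on the recursion gives $x_i=z_i-\mu_i$, where the center $\mu_i$ depends only on $\ary{c}$ and on $z_{i+1},\dots,z_n$; hence $\rho_{\sigma_i}(z_i-\mu_i)=\rho_\sigma(x_i\tilde{\ary{b}}_i)$, and multiplying the one-dimensional densities over all $i$ together with orthogonality of the $\tilde{\ary{b}}_i$ yields $\Pr[\,\text{output}=\ary{v}\,]=\rho_\sigma(\ary{v}-\ary{c})\big/\prod_{i=1}^{n}\rho_{\sigma_i}(\Z-\mu_i)$. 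The numerator is exactly the unnormalized discrete Gaussian weight of $\ary{v}$, so the whole point is to show the denominator is essentially a constant, independent of $\ary{v}$. This is where the quantitative hypothesis $\sigma\geq\|\tilde{\mat{B}}\|\sqrt{\ln(2n+4)/\pi}$ is used: it forces $\sigma_i\geq\sqrt{\ln(2(1+1/\epsilon))/\pi}\geq\eta_\epsilon(\Z)$ for $\epsilon$ of order $1/\poly(n)$ (precisely $\epsilon=1/(n+1)$ for the stated constant, and any smaller $\epsilon$ at the cost of a constant factor in $\sigma$), and the smoothing-parameter lemma then gives $\rho_{\sigma_i}(\Z-\mu_i)\in[\tfrac{1-\epsilon}{1+\epsilon},1]\cdot\rho_{\sigma_i}(\Z)$ uniformly over the center $\mu_i$. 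Therefore each output probability matches $D_{\Lattice,\sigma,\ary{c}}(\ary{v})$ up to a multiplicative factor within $(1\pm\epsilon)^{\pm n}$, so the output distribution is within statistical distance $O(n\epsilon)$ of $D_{\Lattice,\sigma,\ary{c}}$; choosing $\epsilon$ small enough makes this negligible, which is the sense in which we invoke the lemma (and which suffices for all our uses, e.g.\ together with Lemma~\ref{lemma:gaussianbound}).

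The hard part will be this smoothing step: one must verify the bound $\eta_\epsilon(\Z)\leq\sqrt{\ln(2(1+1/\epsilon))/\pi}$ via a Poisson-summation estimate on the dual lattice $\Z^*=\Z$, check that the per-step normalizers $\rho_{\sigma_i}(\Z-\mu_i)$ are pinned to $\rho_{\sigma_i}(\Z)$ within the claimed factor for \emph{every} center $\mu_i$ that can arise during the recursion, and confirm that the $n$ accumulated multiplicative errors telescope to a negligible total statistical distance. The remaining ingredients — the identity $x_i=z_i-\mu_i$, the polynomial running time, and membership of the output in $\Lattice(\mat{B})$ — are routine bookkeeping that I would carry out but not dwell on.
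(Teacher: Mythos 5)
First, note that the paper does not prove this statement at all: it is imported verbatim (with its sharp constant) from \cite{DBLP:conf/stoc/GentryPV08,brakerski2013classical}, so there is no in-paper proof to compare against. Your reconstruction of the Klein/GPV randomized nearest-plane sampler is the right starting point, and most of the ingredients are correct: the recursion, the identity $x_i=z_i-\mu_i$ (with $\mu_i$ depending only on $\ary{c}$ and $z_{i+1},\dots,z_n$), the resulting formula $\Pr[\ary{v}]=\rho_\sigma(\ary{v}-\ary{c})\big/\prod_i\rho_{\sigma_i}(\Z-\mu_i)$, the bound $\eta_\epsilon(\Z)\le\sqrt{\ln(2(1+1/\epsilon))/\pi}$, and the observation that the constant $\sqrt{\ln(2n+4)/\pi}$ corresponds exactly to $\epsilon=1/(n+1)$.

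The gap is quantitative but genuine: with $\epsilon=1/(n+1)$ your argument gives a per-coordinate normalizer distortion of $\frac{1+\epsilon}{1-\epsilon}$, which compounds over $n$ coordinates to a \emph{constant} factor ($\approx e^{2}$), so the statistical distance you obtain is $O(n\epsilon)=O(1)$, not negligible -- and your remark that smaller $\epsilon$ costs only ``a constant factor in $\sigma$'' is false, since negligible $\epsilon$ forces $\sigma/\|\tilde{\mat{B}}\|=\omega(\sqrt{\log n})$, i.e.\ the original GPV hypothesis rather than the sharper bound stated in the lemma. Thus, as written, you prove the GPV form of the result but not the cited statement, which asserts a sample distributed according to $D_{\Lattice,\sigma,\ary{c}}$ at the threshold $\sigma\ge\|\tilde{\mat{B}}\|\sqrt{\ln(2n+4)/\pi}$. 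The missing idea (the route taken in \cite{brakerski2013classical}) is a correction step on top of Klein's algorithm: since the $\ary{v}$-dependent normalizer $\prod_i\rho_{\sigma_i}(\Z-\mu_i(\ary{v}))$ is pinned by the smoothing bound to lie in $\bigl[\bigl(\tfrac{1-\epsilon}{1+\epsilon}\bigr)^n,1\bigr]\cdot\prod_i\rho_{\sigma_i}(\Z)$, one can accept a candidate $\ary{v}$ with probability proportional to that normalizer and resample otherwise; the acceptance probability is bounded below by a universal constant precisely because $\epsilon=1/(n+1)$, so the procedure remains p.p.t.\ and its output distribution is exactly (rather than approximately) proportional to $\rho_\sigma(\ary{v}-\ary{c})$. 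Either add such a rejection step, or weaken the hypothesis to $\sigma\ge\|\tilde{\mat{B}}\|\cdot\omega(\sqrt{\log n})$ and the conclusion to negligible statistical distance (which is all the paper actually needs in \S5.3.3, but is not the statement you set out to prove).
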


Note, however, that sampling from discrete-Gaussian distribution could lead to a vector with negative entries. Having negative entries as the exponent vector might trivially leak the inverse of other encodings (as mentioned in \S~\ref{sec:attack_concrete_TGII_trivial}). There are at least two solutions to this problem. One is to sample a vector $\ary{v}_0$ with positive large entries in $\Lattice$, then add $\ary{v}_0$ on any other vectors $\ary{v}'$ sampled from discrete-Gaussian so that $\ary{v}_0 + \ary{v}'$ have all positive entries (this solution does leak one vector $\ary{v}_0$ in $\Lattice$). The other solution is to pick at least one new prime ideal in the generation set of each composable encoding, so that the inverse won't be trivially obtained from the other encodings.







\paragraph{The possibility of leaking $h(D)$ from other sources. }

We have discussed the possibilities of leaking $D$ and a basis of $\Lattice_\OOO$ from the encodings and the corresponding countermeasures. It remains to check whether there are other possibilities of leaking $h(D)$. 

Again recall that in the basic parameter setting, $h(D)$ is set to be polynomially smooth. The immediate consequence is that in the encodings, we shall not use prime degrees $\ell$ such that the order of the ideal class $[(\ell, b, \cdot)]$ is polynomially large in $\CL(D)$, to avoid the risk of unnecessarily leaking any factors of $h(D)$.

Additionally, we ask:
\begin{enumerate}
\item Given an encoding, is it feasible to recognize that it encodes an element of polynomial order in the group $\CL(\OOO)$? 
\item Given an encoding of an element that is known to be of polynomial order, is the order explicitly revealed (instead of having an $1/\poly(\secp)$ chance of being guessed correctly)?
\end{enumerate} 

If self-composition is feasible, then the answers to both questions are yes. But for our construction, self-composition of the encoded group elements is infeasible. 

Denote the degree of an encoding $\enc(x)$ by $d$ and the order of $x$ in $\CL(\OOO)$ by $r$. Let the canonical encoding of $\enc(x)$ be $j_x$. In the special case where $d^r$ is polynomially large, then we can efficiently recognize that $\enc(x)$ has order $r$ by first guessing $d^r$ and then testing whether $\Phi_d(j_0, j_x) = 0\pmod{N}$, and $\Phi_{d^{r-1}}(j_0, j_x) = 0\pmod{N}$. The presence of such an encoding then leaks $r$ as a factor of the group order. 

One way of minimizing the possibility of having an element with small order is to choose the prime factors of $h(\OOO)$ to be as large as possible. For example, we can choose $D_0$ and the odd prime factors of $f_i - 1$ (where $f_i$ is a prime factor of the conductor) to be larger than $O(\secp^3)$.

\subsection{Miscellaneous}

\paragraph{On the decisional version of the inversion problem. }
Like the typical hard problems in cryptography, a search problem usually comes with a decisional variant. For the hardness of inversion, the natural way of defining the decision problem is to say that given a group element $x$, it is hard to decide whether a string $s$ represents $x^{-1}$ or a random group element. While in the ideal interface of (T)GII the decisional variant is easy, simply due to the fact that one can compose $x$ and $s$, then check if the result is equal to $1_\G$ or not. In our concrete instantiation, the following variant of the decisional problem has a chance to be hard: 
\begin{definition}[Decisional inversion problem]\label{def:decisionalGII}
Given the public parameter $\PP$ and a (composable) encoding $\enc(x)$, decide whether a string $s$ represents the canonical encoding of $x^{-1}$ (namely $j(x^{-1} * E_0 )$) or a random value in $(\ZNZ)^\times$. 
\end{definition}
Given that the canonical encoding is not composable, there is a chance that the decisional problem is hard.
However, when the degree of $\enc(x)$ is a polynomial $d$, Problem~\ref{def:decisionalGII} is easy, since we can test whether $\Phi_d(j_0, s) = 0\pmod{N}$ or not (in contrast, we conjecture the search problem is hard even for polynomial degree encodings). If the encoding is of super-polynomial degree, then the decisional problem seems hard. 

We remark that the applications of this paper do not necessarily rely on the hardness of the decisional version of the inversion problem, so the cryptanalysis effort we spend on the decisional problem is not as much as the search problem.

\paragraph{Rational points on $X_0(\ell)$.}
Rational points on $X_0(\ell)$ give solutions to $\Phi_{\ell}(x,y)=0$ over $\Z/n\Z$ for any $n\in\N$, unless the denominators of $x$ or $y$ are not in $(\Z/n\Z)^\times$. Recall that the genus of the modular curve $X_0(\ell)$ grows linearly with $\ell$, hence for large $\ell$, by Faltings' theorem, there are only finitely many rational points on $X_0(\ell)$. So for large $\ell$, one cannot hope to find $\ell$-isogenous neighbors over $\ZNZ$ by first finding points over $\Q$ and then reducing them mod $N$. For small $\ell$, on the other hand, there are rational points that seem to be easy to find. 

Let $P = (x, y)$ be such a ($\Q$-rational) point on $X_0(\ell)$, where neither $x$ nor $y$ is equal to $0$ or $1728$. The presence of these points, on one hand, allows one to easily find one $j$-isogenous neighbor of $x$ mod $N$ (if the denominator of $x$ is invertible in $\ZNZ$), that is $y$ mod $N$.

On the other hand, for an attack towards the $(\ell,\ell^2)$-neighbor problem one would like to find two $\Q$-rational points $P,Q$ sharing a common coordinate (i.e. either $x_P=x_Q$ or $y_P=y_Q$). We remark that having such $\Q$-rational points is highly unlikely for large $\ell$. More precisely, wlog assume that $x_P=x_Q$. Then $y_P$ and $y_Q$ correspond to the $j$-invariants of $\ell^2$-isogenous curves. They, moreover, are both in $\Q$, and hence the point $(y_P,y_Q)$ defines a $\Q$-rational point on $X_0(\ell^2)$. If $\ell>7$, $X_0(\ell^2)$ has genus strictly great than $1$ and therefore has finitely many rational points. Therefore, for $\ell>7$ even if one can find a pair of $Q$-rational points $P$ and $Q$ on $X_0(\ell)$, it is highly unlikely that they would share a common coordinate. 

Although we do not see an immediate attack through rational points on $X_0(\ell)$, we can always take $\ell>7$ for extra precaution.

\subsection{Summary}

We summarize the potential attacks and the countermeasures.

The adversary is given the public parameters $N$, $j_0$, and polynomially many composable encodings, each containing several $j$-invariants connected by isogenies of polynomial degrees. Following Proposition~\ref{claim:kernelpoly} we can recover all the explicit isogenies between neighboring $j$-invariants. Given that we choose all the prime degrees $\ell$ of the isogenies to be $\geq 5$ (or $>7$ for extra precaution), the explicit isogenies do not trivially leak the $x$-coordinates of the points in the kernel of the isogeny. 

We should avoid leaking the number of points on those elliptic curves over $\F_p$ and $\F_q$ with endomorphism ring $\OOO$. Otherwise the adversary can pick a random $x$ as the $x$-coordinate of a random point on the curve, and run Lenstra's factoring algorithm to factorize $N$. 

Central to the hardness of inversion is the $(\ell, \ell^2)$-isogenous neighbor problem over a composite modulus $N$ with unknown factorization. Among the potential solutions to the $(\ell, \ell^2)$-isogenous neighbor problem, finding the one corresponding to the image of a horizontal isogeny would break our candidate group with infeasible inversion, so it is worth investigating algorithms which find isogenies with specific directions. However, the only known such algorithm, that of \cite{DBLP:journals/moc/IonicaJ13}, does not seem to work over $\ZNZ$.

In the concrete instantiation of a group with infeasible inversion, the adversary can always try to reach new $j$-invariants by performing the legal GCD operations. Such attacks are captured under the model given in Definition~\ref{def:gcd_eval}. The parallelogram attack is a powerful attack under this model. It is effective when given the composable encodings of group elements $a$, $b$, $c$ such that $a\circ b = c$, and the degrees of the three encodings are coprime. This attack can be prevented when 2 out of the 3 pairs of the degrees of $\enc(a)$, $\enc(b)$ and $\enc(c)$ are not composable. In the applications of the directed transitive signature and the broadcast encryption, we are able to set the parameters to prevent the attack.

Under GRH the isogeny graph is an expander, which means given polynomially many composable encodings, it is reasonable to assume that the closure of the composition covers all the $h(D)$ $j$-invariants. However, finding a path of composition to reach a specific point (e.g. the inverse of some encoding) may still require solving the discrete-log problem over $\CL(D)$. Under the current choice of parameters, the discrete-log problem over $\CL(D)$ can be solved efficiently once $h(D)$ is given, and $h(D)$ can be recovered efficiently given $D$ or any relation lattice $\Lattice$ of dimension $w$ such that $\Z^{w}/\Lattice \simeq \CL(D)$. So we should prevent leaking any of $\Lattice$, $D$, or $h(D)$.

The discriminant $D$ cannot be polynomially large for two reasons: First, if $D\in\poly(\secp)$, then we can guess $D$ and compute the class number in polynomial time, which enables us to efficiently solve the discrete-log problem over $\CL(D)$. Second, we can also compute the Hilbert class polynomial $H_D$ in polynomial time and therefore solve the $(\ell, \ell^2)$-isogenous neighbor problem. 

Even if $D$ is super-polynomial, it should be kept hidden. 
Since $D$ has to be hidden, we cannot give out a plaintext group element of $\CL(D)$ that allows efficient group operation, e.g. the quadratic form representation of an ideal class $[(a, b, c)]$ (cf. Section~\ref{sec:ICG}), since the discriminant $D$ can be read off from $b^2 - 4ac$.

Finally, we also remark that if we were able to choose a super-polynomially large square-free discriminant $D$ with a trapdoor $\tau_D$, so that solving discrete-log over $\CL(D)$ is hard if one is given only $D$, but feasible if one is given the trapdoor $\tau_D$, then we can construct a system with $D$ and $h(D)$ being public, which would avoid all the complications in hiding $D$ and $h(D)$. 


\section{Directed transitive signature for directed acyclic graphs}\label{sec:DTS}

The concept of transitive signature (DTS) was introduced by Rivest and Micali \cite{DBLP:conf/ctrsa/MicaliR02a}. 
In a transitive signature scheme the master signer is able to sign on the edges of a graph $G$ using the master signing key. Given the signatures on a specific set $S$ of edges, say $S = \set{ (u, v), (v, w)}$,  everyone can compute the signature on the edge $(u, w)$, and in general, any edge in the transitive closure of $S$, but not for any edge beyond the transitive closure of $S$. Transitive signatures are useful in the scenario where the graph represents some authorization relationship. The master signer has limited availability and has the intention of signing a limited number of edges ahead of time. New users can then dynamically join the graph, build edges, and obtain the composed signatures if the edges live in the transitive closure of the existing ones.

Transitive signatures for undirected graphs are constructed in \cite{DBLP:conf/ctrsa/MicaliR02a,bellare2002transitive} and many others. But for directed graphs, only the special case of directed trees was achieved by Yi \cite{DBLP:conf/ctrsa/Yi07} from the RSA assumption. However, Neven later gave a construction of DTS for directed trees from any standard digital signature, which shows that directed trees are indeed simpler to achieve \cite{neven2008simple}.


We instantiate DTS from TGII for any directed acyclic graph (DAG). The basic instantiation directly follows the construction from \cite{hohenberger2003cryptographic,molnar2003homomorphic}, where the signatures are the composable encodings of some group elements. Since the lengths of the composable encodings in our TGII keep growing during composition, the lengths of the composed signatures grow proportionally to the number of individual signatures. 
However, a DTS where the lengths of composed signatures grow proportionally to the number of individual signatures is trivially achievable -- simply use any signature scheme and interpret the concatenation of a signature on the edge $\overrightarrow{ij}$ and a signature on the edge $\overrightarrow{jk}$ as the signature on the edge $\overrightarrow{ik}$ -- as was explicitly mentioned in \cite[Page~14]{molnar2003homomorphic}.

To provide a DTS scheme with short composed signatures, we add a signature compressing step using the partial extraction algorithm (Algorithm~\ref{alg:partialconvert}) described in the general version of TGII in Section~\ref{sec:TGIIconstruction_general}. However, the compressed signature cannot be further composed with other signatures, so the scheme has not yet met the ideal definition of DTS provided by \cite{hohenberger2003cryptographic,molnar2003homomorphic}. Still, being able to compress the final composed signature is desirable for a DTS, and it does not seem to be trivially achievable.


\subsection{Definition}

We adopt the definition of a directed transitive signature from \cite{hohenberger2003cryptographic,molnar2003homomorphic}.
\begin{definition}
A directed transitive signature scheme $\DTS = (\Gen, \Cert, \Sig, \comp, \Ver)$ consists of the following tuple of efficient algorithms:
\begin{itemize}
\item $\Gen$: 
	The key generation algorithm $\Gen$ takes as input the security parameter $1^\secp$, returns the master public-key secret-key pair $(\MPK,\MSK)$.
\item $\Cert$: 
	The node certification algorithm $\Cert$ takes as input the master secret key $\MSK$ and a node $i\in\N$, returns $(\PK(i), \SK(i))$, a public value and a secret value for node $i$. 
\item $\Sig$: 
	The edge signing algorithm $\Sig$ takes as input the master secret key $\MSK$, the source node $i$ and destination node $k$, and the associated $(\PK(i), \SK(i))$, $(\PK(k), \SK(k))$, outputs a signature $\sigma_{i,k}$ of the edge $\overrightarrow{ik}$.
\item $\comp$: 
	The composition algorithm $\comp$ takes as input $\MPK$, two consecutive edges $\overrightarrow{ij}$, $\overrightarrow{jk}$, and the signatures $\sigma_{i,j}$, $\sigma_{j,k}$, outputs a signature $\sigma_{i,k}$ for the edge $\overrightarrow{ik}$. 
\item $\Ver$: 
	The verification algorithm $\Ver$ takes as input the edge $\overrightarrow{ik}$ and its public values $\PK(i), \PK(k)$, and a potential signature $\sigma'$, returns $1$ iff $\sigma'$ is a valid signature of the edge $\overrightarrow{ik}$, $0$ otherwise.
\end{itemize}

In this paper we will be considering the simplest definitions of correctness and security (we refer the readers to \cite{hohenberger2003cryptographic,molnar2003homomorphic} for the more formal definitions). 
For correctness we require that the verification algorithm outputs $1$ on all signatures obtained from compositions in the transitive closure of edges signed by the master signing key. 
For security we require that it is infeasible for any p.p.t. adversary to forge a signature beyond the transitive closure of the signed edges. 
The adversary is allowed to dynamically add nodes and edges in the graph, and is allowed to request the master signer to sign as long as the target edge for forgery does not trivially fall in the transitive closure of the signed edges.
\end{definition}

\subsection{A directed transitive signature scheme for DAGs via TGII}

Assume the DAG in the protocol has $n$ vertices $[1, 2, ..., n]$. All the directed edges $\overrightarrow{ik}$ are heading from a lower index to a higher index, i.e. $i<k$ (any DAG has at least one topological ordering).

\begin{construction}[DTS via TGII]\label{cons:DTS:ideal}
Given a trapdoor group with infeasible inversion $\TGII$, a standard digital signature scheme $\DS$, construct a directed transitive signature $\DTS$ as follows:
\begin{itemize}
\item $\Gen$: 
	The key generation algorithm $\Gen$ takes as input the security parameter $1^\secp$ and
	runs the TGII parameter generation algorithm to produce the public parameters $\TGII.\PP$ and the trapdoor $\tau$ of a group $\G = ( \circ, 1_\G )$.  
	It also generates the signing and verification keys for the regular signature scheme $\DS.\Gen(1^\secp)\to \DS.\SK, \DS.\VK$. 
	It returns the master public-key $\DTS.\MPK = (\TGII.\PP, \DS.\VK )$ and the master secret-key $\DTS.\MSK = (\tau, \DS.\SK)$.
\item $\Cert$: 
	The node certification algorithm $\Cert$ takes as input the master secret key $\DTS.\MSK = (\tau, \DS.\SK)$ and a node $i\in\N$, samples a random element $x_i\in\G$ and a composable encoding $\enc(x_i)$, and sets the encoding to be the public information for $i$: 
	\[ \PK(i) := \enc(x_i) = \TGII.\Trap\Sam(\TGII.\PP, \tau, x_i).\] 
	The secret information on node $i$ can be set as $\SK(i) = x_i^{-1}$, or simply be left as $\bot$ since the master trapdoor holder can invert $\PK(i) = \enc(x_i)$ to get $x_i^{-1}$. $\Cert$ then produces the signature $\Sigma(i) = \DS.\Sig( \DS.\SK, i||\PK(i) )$ and takes $\Sigma(i)$ as the certificate of node $i$.
\item $\Sig$: 
	The edge signing algorithm $\Sig$ takes as input the master secret key $\DTS.\MSK = (\tau, \DS.\SK)$, a source node $i$, a destination node $k$, and the associated $(\PK(i), \SK(i))$, $(\PK(k), \SK(k))$. It first verifies the node certificates, then recovers $x_i^{-1}$ from $\SK(i)$ and $x_k$ from $\PK(k)$. It then outputs 
	\[ \sigma_{i,k}: = \enc(x_i^{-1}\circ x_k) = \TGII.\Trap\Sam( \TGII.\PP, \tau, x_i^{-1}\circ x_k ). \] 
\item $\comp$: 
	The composition algorithm $\comp$ takes as input $\MPK$, two consecutive edges $\overrightarrow{ij}$, $\overrightarrow{jk}$, and the signatures $\sigma_{i,j}$, $\sigma_{j,k}$, outputs $\sigma_{i,j}\circ \sigma_{j,k}$ as the composed signature for the edge $\overrightarrow{ik}$. 
\item $\Ver$: 
	The verification algorithm $\Ver$ takes as input the edge $\overrightarrow{ik}$ and its public values $\PK(i), \PK(k)$, and a potential signature $\sigma'$. It parses the public information as $(i, \enc(i), \Sigma(i))$, $(k, \enc(k), \Sigma(k))$. If any of the certificates on the nodes is invalid, it returns 0. 
	Otherwise, it checks whether 
	\[  \convert(\TGII.\PP, \enc(i)\circ\sigma') = \convert(\TGII.\PP, \enc(k)) .\] 
	If so it returns 1, otherwise it returns 0. 
\end{itemize}
\end{construction}

\paragraph{Choosing the generation sets $S$.}
Now we provide a detailed instantiation of the DTS using our basic version of the TGII (cf. \S~\ref{sec:TGIIfromisogeny}). 
The only parameters left to be specified are the primes $\set{p_{i}}$ of the generation set $S = \set{ C_{i} = [(p_{i}, b_{i}, \cdot)] }_{i\in[ w ]}\subset \G = \CL(\OOO)_{\text{odd}}$ in the first step of the (stateful) encoding sampling algorithm (cf. Algorithm~\ref{alg:represent_enc}).
To do so we need to clarify which encodings are composable in the system and which are not (for functionality); whether there are pairwise composable encodings of $a$, $b$, $c$ that satisfy $a\circ b = c$ (to prevent the parallelogram attack detailed in \S~\ref{sec:parallelogram_attack}); and whether we are able to hide the bases of the relation lattices (to prevent the attack detailed in \S~\ref{sec:attack:basis}).

One way of choosing the generation sets is as follows: Let the following be sets of $\OOO$-ideals of distinct prime norms, each set is of size $O(\log(\secp))$: $S_{i,src}$, $S_{i,dst}$, for all $i\in[n]$, and $S_{common}$.
\begin{itemize}
	\item Let the generation set for $\PK(i) = \enc(x_i)$ be $S_{common}\cup S_{i,dst}$, plus another $\OOO$-ideal of prime norm $\ell_i$ that has never been used.	
	\item Let the generation set for $\sigma_{i,k} = \enc(x_i^{-1}\circ x_k)$ be $S_{i,src}\cup S_{k,dst}$, plus another $\OOO$-ideal of prime norm $\ell_{i,k}$ that has never been used.
\end{itemize}

Let us first check correctness of our construction: For each index $i\in[n]$, the public key on node $i$, $\PK(i) = \enc(x_i)$, should be composable with all the edges $\overrightarrow{jk}$ with $j\geq i$ (for correctness it is not important if $\PK(i)$ is composable with any other edges or vertices).
A signature $\sigma_{i,k} = \enc(x_i^{-1}\circ x_k)$ should be composable with any signature $\sigma_{i',k'}$ such that $i'\geq k$, or $k'\leq i$, and with any public value $\PK(h)$ such that $h\leq i$, but not with any other vertices or edges. 
So our assignment of the prime ideals supports the necessary composition functionalities.

We now verify that the parallelogram attack from \S~\ref{sec:parallelogram_attack} does not apply to our construction: In the DTS scheme there are two ways of obtaining composable encodings of $a, b, c$ such that $a\circ b = c$.
\begin{enumerate}
\item Through the compositions of the signatures on the edges $\overrightarrow{ij}$, $\overrightarrow{jk}$, $\overrightarrow{ik}$, which gives 
\begin{equation}\label{eqn:DTS1}
\sigma_{i,j} \circ \sigma_{j,k} = \sigma_{i,k} \Rightarrow 
\convert(\TGII.\PP, \enc(x_i^{-1}\cdot x_j) \circ \enc(x_j^{-1}\cdot x_k)) = 
\convert(\TGII.\PP, \enc(x_i^{-1}\cdot x_k)).
\end{equation}
Our choices of the generation sets make sure that $\sigma_{j,k}$ and $\sigma_{i,k}$ are not composable due to the common $S_{k,dst}$; $\sigma_{i,j}$ and $\sigma_{i,k}$ are not composable due to the common $S_{i,src}$. 
\item Through the verification algorithm on the public values $\PK(i)$, $\PK(k)$, and the signature on $\overrightarrow{ik}$, which gives 
\begin{equation}\label{eqn:DTS2}
\PK(i) \circ \sigma_{i,k} = \PK(k) \Rightarrow 
\convert(\TGII.\PP, \enc(x_i) \circ \enc(x_i^{-1}\cdot x_k)) = \convert(\TGII.\PP, \enc(x_k)). 
\end{equation}
Our choices of the generation sets make sure that $\PK(i)$ and $\PK(k)$ are not composable due to the common $S_{common}$; $\sigma_{i,k}$ and $\PK(k)$ are not composable due to the common $S_{k,dst}$. 
\end{enumerate}
Therefore, in both cases, two of the three pairs of the degrees share prime factors. Therefore the parallelogram attack does not apply.

Finally, we verify that it is unlikely to leak a full-rank basis of any relation lattice of $\CL(\OOO)$. Note that the only two ways of obtaining non-trivial encodings of the identity are mentioned in Eqn.~\eqref{eqn:DTS1} and Eqn.~\eqref{eqn:DTS2}. Due to the fresh prime ideal inserted in the generation set of each encoding, there are always more dimensions than the number of linearly independent relations in a potential relation lattice, which means it is unlikely to obtain any full-rank basis of such a lattice.

\begin{remark}[Why not supporting waiting signatures]\label{remark:whynotws}
The definition of DTS from \cite{hohenberger2003cryptographic} additionally allows the signatures to be signed, composed, or verified over non-consecutive edges. Such signatures are called \emph{waiting signatures}. 
The DTS construction from ideal TGII in \cite{hohenberger2003cryptographic,molnar2003homomorphic} naturally supports waiting signatures. 
To do so, the verification algorithm $\Ver$ takes as input two multisets of sources and destinations $P_{src} = \set{a_1, ..., a_z}$, $P_{dst} = \set{b_1, ..., b_z}$, and a potential signature $\sigma'$.  
It checks whether 
\[  \convert(\TGII.\PP, \sigma' \circ \enc(a_1) \circ  ... \circ \enc(a_z)) = \convert(\TGII.\PP, \enc(b_1) \circ  ...\circ \enc(b_z)) .\] 
If so returns 1, otherwise returns 0.   

Our TGII construction is able to support waiting signatures in terms of functionality, but not security. The reason is: to verify waiting signatures, we would have to make sure that all the public values of the vertices are composable. But then the parallelogram attack would apply through Eqn.~\eqref{eqn:DTS2}. The attacker would then be able to recover the encoding of $x_i^{-1}$, which is the secret-key on node $i$.
\end{remark}

\paragraph{Compressing the composed signature.}
The definition of \cite{hohenberger2003cryptographic,molnar2003homomorphic} additionally requires that for any consecutive edge in the graph, the composed signature should be indistinguishable from a signature produced by the master signer. We remark that the DTS from our basic version of the TGII (cf. \S\ref{sec:TGIIfromisogeny}) does not achieve this property, since the signatures grow as they are composed\footnote{The same happens in the DTS from the TGII implied by the self-bilinear maps with auxiliary inputs \cite{DBLP:conf/crypto/Yamakawa0HK14}, where the auxiliary input in the composed encoding keeps growing.}.

Instead, we will provide a signature compression technique using the partial extraction algorithm from the general version of TGII in \S\ref{sec:TGIIconstruction_general}. The compression can only be applied to a composed signature $\sigma_{i,k}$ which can be verified (given $\PK(i)$ and $\PK(k)$) but can no longer be composed with other signatures. Our construction, therefore, does not fully achieve the indistinguishability definition of \cite{hohenberger2003cryptographic,molnar2003homomorphic}, in the sense that a compressed signature cannot be composed with others later.

To obtain the compressed signature, we apply $\Partial.\convert(\TGII.\PP, \sigma_{i,k}, \PK(i))$ to obtain the $j$-invariant $j_{\overrightarrow{ik}} = \convert(\TGII.\PP, \sigma_{i,k})$, the isogeny $\phi_i: j_{\overrightarrow{ik}}\to j_{x_k}$ that represents $\PK(i)$, and a SNARG $\pi$ for the instance $(\PK(i), \phi_i)$ and statement ``there exists a string $\mathsf{str}$ such that $\phi_i$ is computed from running a circuit that instantiates Algorithm~\ref{alg:partialconvert} on inputs $\mathsf{str}, \phi_i$''. Let $(j_{\overrightarrow{ik}}, \phi_i, \pi)$ be the compressed signature. Note that its size depends only on the security parameter.  

We change the verification algorithm as follows: $\Ver$ takes as input a source node $a$, a destination node $b$, and a potential compressed signature $(j, \phi, \pi)$ for an edge from $a$ to $b$. It parses the public information as $(a, \enc(a), \Sigma(a))$, $(b, \enc(b), \Sigma(b))$ and if any of the certificates on the nodes is invalid it returns 0. Otherwise, it checks:
\begin{itemize}
\item Whether $\phi(j) = \convert(\TGII.\PP, \enc(b))$,
\item Whether $\pi$ is a valid proof for the following statement: there is a string $\mathsf{str}$ such that $\phi$ is obtained by applying Algorithm~\ref{alg:partialconvert} over the input $\mathsf{str}$, $\enc(a)$.
\end{itemize}
If both of the checks pass, returns 1, otherwise returns 0. 

The correctness of the verification algorithm follows from the correctness of the encoding algorithm, the algorithm $\Partial.\convert$, and the completeness of the SNARG. 
The soundness of SNARG guarantees that the isogeny $\phi$ is obtained honestly from applying $\Partial.\convert$ on some string $\mathsf{str}$ and $\enc(a)$, which means $\phi$ correctly represents $\PK(a)$ and therefore $j$ must be $\convert(\TGII.\PP, \enc(a^{-1}\circ b))$.

Furthermore, we can use a zero-knowledge SNARG (from e.g. \cite{groth2016size}), so that the compressed signature is indistinguishable from a compressed signature produced by the master signer.

\section{Broadcast encryption}\label{sec:BE}

A broadcast encryption scheme allows the encrypter to generate ciphertexts that are decryptable by a designated subset of the receivers. A trivial solution for broadcast encryption is to simply encrypt the message many times for each user's decryption key. Although this does provide a solution, it clearly is not effective as the number of receivers grow. A meaningful broadcast encryption scheme needs to be more efficient on at least one of: the cost of encryption, size of the public parameters, or the sizes of users' decryption keys (cf. \cite{DBLP:conf/crypto/FiatN93}). 

In this section we give a concrete instantiation of the broadcast encryption scheme of Irrer et al. \cite{ILOP04}, which was designed under the ideal interface of GII. Our instantiation supports private-key encryption and allows any numbers of users to collude. The encryption overhead and the users' secret keys are independent of the total number of the users $n$, however the public parameter size is linear in $n$. 
Let us remark that our aim here is to give a concrete application of our TGII construction and hence we did not try to optimize the public parameter overhead. 
Let us also mention that the asymptotic efficiency of our scheme has been achieved by the construction in \cite{DBLP:conf/crypto/BonehGW05} using bilinear maps. There are other schemes in literature that achieve $\log(n)$ public parameter overhead under the assumption of the security of multilinear maps \cite{DBLP:conf/crypto/BonehWZ14} or iO \cite{DBLP:journals/iacr/Zhandry14a}. We leave the challenge of achieving smaller public parameter blowup from GII or TGII to the interested reader.

\subsection{Definition}

A private-key broadcast encryption scheme consists of a tuple of efficient algorithms:
\begin{itemize}
\item $\Setup(1^\secp)$: The setup algorithm takes as input the security parameter $1^\secp$, then generates the public parameters $\PP$ and the master secret key $\MSK$. 

\item $\Gen(\PP, \MSK, u)$: The user key generation algorithm takes as input the user id, $u$, in the list of users $\mathcal{U}$. It generates a secret key $\SK_u$ and (possibly) a public key $\PK_u$ for user $u$. $\PK_u$ is included in the public parameters.

\item $\Enc(\PP, \MSK, \Gamma, m)$: The encryption algorithm takes as input a polynomial size set $\Gamma \subseteq \mathcal{U}$ of recipients. It applies a deterministic algorithm over $\PP, \MSK, \Gamma$ to derive a key $K$ of a symmetric-key encryption scheme. Denoting the encryption of the message $m$ under $K$ by $\CT_{K, m}$, it outputs $\CT = (\Gamma, \CT_{K, m})$.

\item $\Dec(\PP, u, \SK_u, \CT)$: The decryption algorithm first parses $\CT$ as $(\Gamma, \CT_{K, m})$ and derives the symmetric-key $K$ from $\PP, \SK_u$, and $\Gamma$. It then uses $K$ to decrypt $\CT_{K, m}$.
\end{itemize}

The scheme is said to be correct if for all subsets of the users $\Gamma\subseteq\mathcal{U}$ and for all messages $m$, any user $u$ from the set $\Gamma$ decrypts the correct message, i.e. $ \Dec(\PP, u, \SK_u, \Gamma, \Enc(\PP, \MSK, \Gamma, m)) = m$. 

For security we consider the simplest form of the key-recovery attack, defined by the following ``key-recovery" game between an adversary and a challenger.
\begin{enumerate}
\item The challenger runs the setup algorithm to generate a master secret key $\MSK$ and the public parameters. The challenger then picks a set of users $\mathcal{U}$ and generates the public keys and the secret keys of the users. The adversary is given $\mathcal{U}$ and all the public parameters.
\item The adversary picks a subset $\Gamma \subseteq\mathcal{U}$ of users where it wants to attack. The challenger gives the adversary all the secret keys for the users not in $\Gamma$, i.e. $\SK_u$ for $u\notin\Gamma$.
\item The adversary can make encryption queries on any message $m$ and any subset $\Gamma'\subseteq\mathcal{U}$ (including $\Gamma' = \Gamma$). The challenger runs the Encrypt algorithm to obtain $CT = \Enc(\PP, \MSK, \Gamma', m)$ and sends to the adversary. 
\item The adversary outputs a symmetric key $K^*$ and wins the game if $K^*$ is equal to the symmetric key $K$ derived from $\PP$, $\MSK$ and $\Gamma$, loses otherwise.
\end{enumerate}
A broadcast encryption scheme is said to be secure if for any polynomial time adversary the odds of winning the key-recover game is negligible.

\begin{figure}
\begin{center}
\begin{tabular}{lllllll}\hline
Construction  						& $|\text{param}|$ & $|\text{user key}|$  & $|\text{CT}|$          & Based on    & PK & CR \\\hline
\cite{DBLP:conf/crypto/FiatN93}     & $O(t^2 n \log n )$  & $O(t \log^2 t \log n)$ & $O(t^2 \log^2 t \log n)$ & RSA assumption           & No  & $\leq t$ users \\
\cite{ILOP04} 						& $O(n)$           & $O(1)$               & $O(1)$                 & Ideal GII   & No & Arbitrary \\
\cite{DBLP:conf/crypto/BonehGW05} I & $O(n)$           & $O(1)$               & $O(1)$                 & Bilinear maps & Yes & Arbitrary \\
\cite{DBLP:conf/crypto/BonehGW05} II& $O( \sqrt{n} )$  & $O(\sqrt{n})$        & $O(1)$                 & Bilinear maps & Yes & Arbitrary \\
\cite{DBLP:conf/crypto/BonehWZ14} 	& $O(\log n)$      & $O(\log n)$          & $O(\log n)$            & Mmaps 	& Yes & Arbitrary    \\
\cite{DBLP:journals/iacr/Zhandry14a}& $O(\log n)$      & $O(\log n)$          & $O(\log n)$            & iO 	& Yes & Arbitrary    \\
This work     						& $O(n)$           & $O(1)$               & $O(1)$                 & A concrete TGII & No  & Arbitrary  \\
\end{tabular}
\caption{A brief summary of the existing collusion resistant broadcast encryption schemes. $n$ represents the number of users; ``PK" stands for supporting public key encryption; ``CR" stands for being collusion resistant. All the parameters ignore the (possibly multiplicative) $\poly(\secp)$ factors. }
\end{center}
\end{figure}

\subsection{A private-key broadcast encryption scheme from TGII}

We first present the construction from an abstract TGII, then specify the parameters in detail.

\begin{construction}[Broadcast encryption under an abstract TGII]\label{cons:ideal}
Given a trapdoor group with infeasible inversion $\TGII$ and a symmetric-key encryption scheme $\Sym$, construct a broadcast encryption scheme $\BE$ as follows:
\begin{itemize}
\item $\BE.\Setup(1^\secp)$: The setup algorithm takes as input the security parameter $1^\secp$, runs the TGII parameter generation algorithm to produce the public parameters $\TGII.\PP$ and the trapdoor $\tau$ of a group $\G = ( \circ, 1_\G )$. Then sample a random element $s\in\G$. 
The public parameter $\BE.\PP$ is set to be $\TGII.\PP$. 
The master secret key $\BE.\MSK$ includes $\tau$ and $s$.

\item $\BE.\Gen(\BE.\PP, \BE.\MSK, u)$: The user secret key generation algorithm parses $\tau, s$ from $\BE.\MSK$. It samples a random element $x\in\G$, computes $\enc(x)\la \TGII.\Trap\Sam(\TGII.\PP, \tau, x)$ as the user's public key $\PK_u$; computes $\enc(s\circ x)\la \TGII.\Trap\Sam(\TGII.\PP, \tau, s\circ x)$ and treats it as the user's secret key $\SK_u$.

\item $\BE.\Enc(\BE.\PP, \BE.\MSK, \Gamma,m)$: The encryption algorithm takes as input a polynomial size set $\Gamma \subseteq \mathcal{U}$ of recipients and a message $m$. It first computes the message encryption key $K = \left( \prod_{i \in \Gamma} x_i \right)\circ s $, then computes $\Sym.\CT_{K, m}:= \Sym.\Enc(K, m)$ and outputs $\BE.\CT = (\Gamma, \Sym.\CT_{K, m})$. 

\item $\BE.\Dec(\BE.\PP, u, \SK_u, \CT)$: The decryption algorithm extracts the set $\Gamma$ from $\BE.\CT$, takes the public keys $\PK_i$ for users $i\in\Gamma\setminus{u}$, the secret key $\SK_u$ for user $u$, and computes $K' = \convert(\TGII.\PP, \prod_{ i\in\Gamma\setminus{u} }(\PK_i)\circ \SK_u) $. It then decrypts $\Sym.\CT_{K, m}$ using $K'$.
\end{itemize}
\end{construction}
To be more cautious, we can also apply a randomness extractor on $K$ to derive a key that is statistically close to the uniform in distribution.

\paragraph{Choosing the generation sets.}
We now provide the detailed instantiation of the broadcast encryption scheme using our basic version of the TGII (cf. Section~\ref{sec:TGIIfromisogeny}). 
Similar to the situation of the DTS, the only parameters left to be specified are the primes $\set{p_{i}}$ in the generation set $S = \set{ C_{i} = [(p_{i}, b_{i}, \cdot)] }_{i\in[ w ]}\subset \G = \CL(\OOO)_{\text{odd}}$ in the first step of the (stateful) encoding sampling algorithm (cf. Algorithm~\ref{alg:represent_enc}).

One way of choosing the primes for the generation sets is as follows. Choose sets of $\OOO$-idelas $S_{i}$ for $i\in \mathcal{U}$ and $S_{msk}$ such that they consist of ideals of distinct prime norms and each one of them is of size $O(\log(\secp))$.	
\begin{itemize}
	\item Let the generation set of $\PK_i = \enc(x_i)$ be $S_{i}$, plus another $\OOO$-ideal of prime norm $\ell_i$ that has never been used.	
	\item Let the generation set of $\SK_i = \enc(x_i \circ s)$ be $S_{i}\cup S_{msk}$, plus another $\OOO$-ideal of prime norm $\ell'_i$ that has never been used.	
\end{itemize}

Let us first check the correctness. The secret key $\SK_i$ does not have to be composable with any other secret keys, it has to be composable with all the public keys except $\PK_i$. 
The $\PK_i$ has to be composable with everything other than $\SK_i$. 
So our assignment of the prime ideals supports the necessary composition functionalities.

We will now consider the parallelogram attack from Section~\ref{sec:parallelogram_attack}. Since the only parameters in the encryption scheme are $\SK_u$ and $\PK_v$ for $u,v\in \mathcal{U}$, the only way we can obtain a identity of the type $a\circ b = c$ is through the relations 
\begin{equation}\label{eqn:BE}
\PK_i \circ \SK_k = \PK_k \circ \SK_i,
\end{equation}
where the public-key secret-key pairs are $\PK_i = \enc(x_i)$, $\SK_i = \enc(x_i \circ s)$, $\PK_k = \enc(x_k)$, $\SK_k = \enc(x_k \circ s)$. The attack scenario in Eqn.~\eqref{eqn:BE} may happen when Users $i$ and $j$ collude to find the secret keys of other users.

Our choices of the generation sets make sure that $\SK_i$ and $\SK_k$ are not composable for any $i,k$ due to the common $S_{msk}$. Similarly, $\SK_i$ and $\PK_i$ are not composable for any $i$ due to the common $S_{i}$. So there are two out of three pairs of the degrees that share prime factors. Therefore, our instantiation is secure to the parallelogram attack.

Finally we check whether there is a chance of leaking a full-rank basis of any relation lattice of $\CL(\OOO)$. Note that the only two ways of obtaining non-trivial encodings of the identity are mentioned in Eqn.~\eqref{eqn:BE}. Due to the new prime ideals inserted in the generation set of each encoding, there are always more dimensions than the number of linearly independent relations in a potential relation lattice, which means it is unlikely to obtain any full-rank basis of such a lattice.

\begin{remark}
The scheme we presented above makes a slight change over the original scheme of Irrer et al. \cite{ILOP04}. 
In \cite{ILOP04} the user's secret key is $x$ and the user's public key is $s\circ x$. The encryption key with respect to the set $\Gamma$ is $K = \left( \prod_{i \in \Gamma} x_i \right)\circ s^{|\Gamma|-1} $. 
Whereas in our scheme we flip the public key and secret key, and change the encryption key accordingly. 
The purpose of the change is to provide a candidate instantiation where only the short-basis of the relation lattice for $S_{msk}$ has to be computed.

Note that this change does not affect the security analysis from \cite{ILOP04}, which shows that a key recovery attack to the broadcast encryption scheme implies the ability of computing inverses.
\end{remark}

\begin{remark}
The CPA-style security definition in \cite{DBLP:conf/crypto/BonehGW05} requires that it is hard to distinguish a correct decryption key from a random key. Our scheme is also a candidate that satisfies the CPA definition when the decisional inversion problem is hard, which is plausible under the current setting of parameters.
\end{remark}

\section{Future directions}

We conclude the paper with several future directions.

\paragraph{Further investigation of the $(\ell, \ell^2)$-isogenous neighbors problem.} 
The hardness of the $(\ell, \ell^2)$-isogenous neighbor problem over $\ZNZ$ is necessary for the security of our candidate trapdoor group with infeasible inversion. Let us remark that once the adversary is given two $j$-invariants that are $\ell$-isogenous, she can recover the rational polynomial of the isogeny, which is not available in the $\ell$-isogenous neighbor problem where the adversary is given a single $j$-invariant as input. Although it is not clear how to use the explicit rational polynomial of the isogeny to mount an attack, it should serve as a warning sign that the $(\ell, \ell^2)$-isogenous neighbors problem might be easier than the $\ell$-isogenous neighbors problem.

\paragraph{Proving security in the GCD evaluation model.}
The GCD evaluation model (cf. Definition~\ref{def:gcd_eval}) provides a simplified interface for the attacker. It will be interesting to prove (or disprove) that the TGII is secure if the adversary is restricted to follow the GCD evaluation model. 

\paragraph{Looking for alternative constructions of GII or TGII.}
Given the complications and the limitations of our construction of TGII, one might want to look for a simpler or a different construction of GII or TGII. Some concrete directions to study further are:
\begin{enumerate}
\item A construction, where the encoding is stateless. 
\item A construction, where the size of the encoding does not grow with composition. 
\item A (T)GII candidate for a non-commutative group. 
\end{enumerate}

\section*{Acknowledgments}
We would like to thank Andrew V. Sutherland, and the anonymous reviewers for their helpful comments.

\bibliography{ref}

\end{document}